\documentclass[11pt, reqno]{amsart}
\makeatletter
\usepackage{fullpage}
\usepackage{textcmds}  
\usepackage{amsmath, amssymb, amsfonts, amstext, verbatim, amsthm, mathrsfs}
\usepackage[mathcal]{eucal}
\usepackage{stmaryrd}
\usepackage{microtype}
\usepackage{graphicx}
\usepackage[all,cmtip,2cell]{xy}
\UseTwocells
\usepackage{pgf,tikz,pgfplots}
\pgfplotsset{compat=1.15}
\usetikzlibrary{arrows}
\usepackage{tikz-cd}
\usepackage{enumerate}
\usepackage{enumitem}
\usepackage{xspace}
\usepackage{cite}
\usepackage{pifont}
\pagestyle{plain}

\usepackage[modulo]{lineno}
\usepackage[colorlinks=true,linkcolor=blue,citecolor=blue,urlcolor=blue,citebordercolor={0 0 1},urlbordercolor={0 0 1},linkbordercolor={0 0 1}]{hyperref} 
\usepackage[shortalphabetic]{amsrefs} 
\usepackage[nameinlink]{cleveref}

\tikzset{
    cross/.pic = {
    \draw[-, ultra thick , rotate = 45] (-#1,0) -- (#1,0);
    \draw[- , ultra thick, rotate = 45] (0,-#1) -- (0, #1);
    }
}

\def\makeCal#1{%
\expandafter\newcommand\csname c#1\endcsname{\mathcal{#1}}}
\def\makeBB#1{%
\expandafter\newcommand\csname b#1\endcsname{\mathbb{#1}}}
\def\makeFrak#1{%
\expandafter\newcommand\csname f#1\endcsname{\mathfrak{#1}}}

\count@=0
\loop
\advance\count@ 1
\edef\y{\@Alph\count@}%
\expandafter\makeCal\y
\expandafter\makeBB\y
\expandafter\makeFrak\y
\ifnum\count@<26
\repeat

\theoremstyle{plain}
\newtheorem{thm}{Theorem}[section]
\newtheorem{cor}[thm]{Corollary}
\newtheorem{lem}[thm]{Lemma}

\newtheorem{claim}[thm]{Claim}

\newtheorem{prop}[thm]{Proposition}
\newtheorem{quest}[thm]{Question}

\theoremstyle{definition}
\newtheorem{rem}[thm]{Remark}
\newtheorem{defn}[thm]{Definition}
\newtheorem{hyp}[thm]{Hypotheses}

\newtheorem{ex}[thm]{Example}

\newtheorem{rec}[thm]{Recollection}

\newtheorem*{notn*}{Notation}

\DeclareMathOperator{\Ch}{Ch}
\DeclareMathOperator{\DCoh}{D^b_{coh}}
\DeclareMathOperator{\Dqc}{D_{qc}}

\DeclareMathOperator{\colim}{colim}

\DeclareMathOperator{\End}{End}
\DeclareMathOperator{\Ext}{Ext}

\DeclareMathOperator{\GL}{GL}

\DeclareMathOperator{\Hom}{Hom}

\DeclareMathOperator{\Mod}{-Mod}

\DeclareMathOperator{\pt}{pt}

\DeclareMathOperator{\rank}{rank}
\DeclareMathOperator{\Rep}{Rep}
\DeclareMathOperator{\RHom}{RHom}

\DeclareMathOperator{\Span}{Span}
\DeclareMathOperator{\Spec}{Spec}

\DeclareMathOperator{\Stab}{Stab}
\DeclareMathOperator{\Sym}{Sym}

\DeclareMathOperator{\wt}{wt}
\DeclareMathOperator{\Pic}{Pic}

\newcommand*{\sheafhom}{\mathcal{H} \kern -.5pt om}


\usepackage{pbox}
\usepackage[normalem]{ulem}

\AtBeginDocument{%
   \def\MR#1{}
} 

\def\l@subsection{\@tocline{2}{0pt}{2.5pc}{4pc}{}} 

\makeatletter

\usepackage{babel}
\begin{document}

\title{Full exceptional collections of vector bundles on rank-two linear GIT quotients}

\author{Daniel Halpern-Leistner and Kimoi Kemboi} 

\begin{abstract}
We produce full strong exceptional collections consisting of vector bundles on the geometric invariant theory quotient of certain linear actions of a split reductive group $G$ of rank two. The vector bundles correspond to irreducible $G$-representations whose weights lie in an explicit bounded region in the weight space of $G$. We also describe a method for constructing more examples of linear GIT quotients with full strong exceptional collections of this kind as ``decorated" quiver varieties.
\end{abstract}

\maketitle

\tableofcontents

\section{Introduction}

A well-known result of Beilinson \cite{B78} states that the line bundles $E_i := \cO_{\bP^n}(i)$ for $i=0,\ldots,n$ form a full strong exceptional collection in the derived category of coherent sheaves $\DCoh(\bP^n)$, meaning that
\begin{enumerate}
\item $\Ext^p(E_i,E_j) = 0$ whenever $p\neq 0$ or $i>j$,
\item $\Hom(E_i,E_i) = k$ for all $i$; and
\item $\{E_i\}$ generate $\DCoh(\bP^n)$ as a triangulated category.
\end{enumerate}
A remarkable consequence of this is an equivalence of categories $\DCoh(\bP^n) \cong {\rm D}^b(A\Mod)$ for the finite dimensional algebra $A = \Gamma(\bP^n, \End(E_0 \oplus \cdots \oplus E_n))$. This effectively reduces homological questions about sheaves on $\bP^n$ to analogous questions about finite dimensional $A$-modules.

Beilinson's theorem inspired many generalizations. For instance, thinking of $\bP^n$ as a homogeneous space leads naturally to the question of which homogeneous spaces of semisimple algebraic groups admit a full (strong) exceptional collection. Soon after Beilinson's work, Kapranov produced full strong exceptional collections of vector bundles on homogeneous spaces of type $A$ and on quadrics \cite{K88}. Partial progress has been made for other types of homogeneous spaces in \cite{Ku08}, \cite{PS11}, \cite{M11}, \cite{FM15}, \cite{KP16}, \cite{F19}, \cite{AAGZ13} among other papers.

The perspective we consider is that of $\bP^n$ as the simplest example of a geometric invariant theory (GIT) quotient of a vector space by a reductive group, whence Beilinson's theorem inspires the following:
\begin{quest} \label{Q:main_question}
Given a reductive group $G$ and a linear representation $X$ such that points of $X^{\rm ss}$ have finite stabilizers and $X^{\rm ss}/G$ is proper, when does $\DCoh(X^{\rm ss}/G)$ admit a full strong exceptional collection of vector bundles?
\end{quest}
Note that $G$ will typically not act freely on $X^{\rm ss}$, so the resulting good quotient will be singular and cannot have a full strong exceptional collection. It is therefore natural to expand our context and regard $X^{\rm ss}/G$ as an orbifold, which we do for the rest of the paper.

It was originally conjectured by King \cite{K97} that \Cref{Q:main_question} would have a positive answer when $G = \bG_m^n$ is a torus and $G$ acts freely on $X^{\rm ss}$, so that $X^{\rm ss}/G$ is a smooth projective toric variety. In this case, King's question asked for the exceptional collection to consist of line bundles. It turns out that this conjecture fails (see for instance \cite{HP06} and \cite{Mi11}), and it also fails if one considers only cases when the GIT quotient is Fano \cite{E14}. These examples shift the flavor of the subject towards understanding the nature and the landscape of the examples in which \Cref{Q:main_question} does have an affirmative answer. We consider the following:

\begin{hyp} \label{H:main_setup} Let $X$ be a linear representation of a split reductive group $G$ with weights $\beta_1,\ldots,\beta_n$. Assume there exists a central cocharacter $\lambda_{0}$ of $G$ such that $\langle \lambda_0, \beta_i \rangle < 0$ for all $i$, with the canonical pairing between weights and cocharacters. Let $\omega^\ast :=\det(X)\otimes\det(\mathfrak{g})^{-1}$
be the anticanonical character of $X$.\end{hyp}

Note that given any cocharacter that pairs negatively with all of the $\beta_i$, we can average under the Weyl group action to obtain a central cocharacter $\lambda_0$ with this property. The existence of such a $\lambda_0$ is equivalent to the condition that $\cO_X^T = k$ for a maximal torus $T \subset G$.

Our main contribution is the following:

\begin{thm}[\Cref{P:exceptional_collections}, \Cref{T:main_fano}, \Cref{T:main_nef_fano} ] \label{T:main_summary}
Let $\ell \in M_\bR^W$. Under \Cref{H:main_setup}, there is a particular convex subset $\Omega \subset M_\bR$ (described below as a ``barrel window") such that the set of $G$-equivariant locally free sheaves on $X^{\rm ss}(\ell)$
\[
\left\{ \cO_{X^{\rm ss}(\ell)} \otimes U : \begin{array}{c} U \in \Rep(G) \text{ irreducible with} \\ \text{weights lying in }\Omega \end{array} \right\}
\]
forms a strong exceptional collection in $\DCoh(X^{\rm ss}(\ell)/G)$. If $G$ has rank two, $\ell$ is close to $\omega^\ast$,\footnote{It suffices to assume that $X^{\rm ss}(\ell) = X^{\rm ss}(t\omega^\ast + (1-t) \ell)$ for all $t \in [0,1)$. The general theory of variation of GIT quotient implies that this will be the case for all $\ell$ in any sufficiently small open neighborhood of $\omega^\ast \in M_\bR^W$.} and points in $X^{\rm ss}(\ell)$ have finite stabilizers, then this collection is full.
\end{thm}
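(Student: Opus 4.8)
To prove fullness I would combine derived Kirwan surjectivity with an inductive resolution argument, the ``barrel window'' $\Omega$ being designed precisely so that the induction terminates. Write $\fX:=[X^{\rm ss}(\ell)/G]$ and let $\cC_\Omega\subset\DCoh(\fX)$ be the triangulated subcategory generated by the bundles $\cO_\fX\otimes U$ with $\wt(U)\subset\Omega$. The first step reduces the (infinite) generation problem to the finite one: since $X$ is smooth affine and $G$ is reductive, $\DCoh([X/G])$ is generated by $\{\cO_X\otimes U : U\in\Rep(G)\text{ irreducible}\}$, the pullbacks of the irreducible $G$-representations from $BG$; and the restriction functor $\DCoh([X/G])\to\DCoh(\fX)$ is a Verdier localization (its kernel being the complexes with cohomology supported on $X\setminus X^{\rm ss}(\ell)$), hence essentially surjective. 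So $\DCoh(\fX)$ is generated by $\{\cO_\fX\otimes U : U\text{ irreducible}\}$, and it suffices to show every such $\cO_\fX\otimes U$ lies in $\cC_\Omega$.

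The engine is a family of finite resolutions attached to the Kempf--Ness stratification $X\setminus X^{\rm ss}(\ell)=\bigsqcup_\alpha S_\alpha$, of the type already used to prove \Cref{P:exceptional_collections}. For each stratum $S_\alpha$, with distinguished one-parameter subgroup $\lambda_\alpha$, center $Z_\alpha$ and associated parabolic $P_\alpha$, one combines the Koszul resolution of $\cO_{S_\alpha}$ (along the regular embedding of $S_\alpha$ into the complement of the lower strata) with the Borel--Weil--Bott description of equivariant bundles on the flag variety $G/P_\alpha$; the graded pieces are bundles whose $\lambda_\alpha$-weights along $Z_\alpha$ lie strictly on one side of the wall $\lambda_\alpha^\perp$. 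Tensoring by $\cO_X\otimes U$ and restricting to $X^{\rm ss}(\ell)$ --- where $S_\alpha$ is empty, so the complex becomes exact modulo the localization kernel --- yields a relation expressing $\cO_\fX\otimes U$ in terms of $\cO_\fX\otimes U'$ whose highest weights are obtained from $\wt(U)$ by translating across $\lambda_\alpha^\perp$; reading the same exact complex from its opposite end gives translates in the other direction. The barrel $\Omega$ is by construction a bounded convex polytope whose bounding walls are precisely of this type, so each step strictly decreases an explicit nonnegative integer measuring how far $\wt(U)$ protrudes past $\partial\Omega$. (For $G=\bG_m$ acting by scaling on $X=\bA^{n+1}$ this reduces to the classical relation $\cO(m)\in\langle\cO(m-1),\dots,\cO(m-n-1)\rangle$ from the Koszul complex of the tautological sections, with $\Omega=[0,n]$.)

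With these relations in hand, fullness is a finite combinatorial descent: well-order the irreducibles by the protrusion measure, and for any $U$ with $\wt(U)\not\subset\Omega$ apply the Koszul relation at a wall it overshoots; the terms produced either are supported on the unstable locus (hence vanish on $\fX$) or have strictly smaller measure, so every $\cO_\fX\otimes U$ lies in $\cC_\Omega$ by descending induction. This is precisely where the rank-two hypothesis and the position of $\ell$ enter: with $M_\bR\cong\bR^2$ and $\ell$ lying in the GIT chamber whose closure contains the anticanonical point $\omega^\ast$ --- where $X^{\rm ss}(\ell)/G$ is (weakly) Fano, which controls the widths of the windows --- the relevant destabilizing one-parameter subgroups can be classified explicitly, so that the walls of $\Omega$ and the translation directions of the Koszul relations match up and a single global monovariant can be checked by hand. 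The finite-stabilizer hypothesis ensures $\fX$ is a smooth proper Deligne--Mumford stack, so that the localization and the resolutions behave as stated.

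\textbf{Main obstacle.} The crux is the combinatorial descent --- exhibiting one monovariant that all the Koszul relations respect at once. In rank one there is essentially a single direction along which weights get pushed and the argument stays close to the $\bP^n$ prototype, but in rank two the error terms produced by a Koszul step at one wall can protrude past a different wall, and it is not formal that iterating the relations terminates rather than cycling. Choosing the shape of the barrel $\Omega$ so that it is simultaneously compatible with every stratum --- and carrying out the rank-two classification of the relevant destabilizing cocharacters that makes this possible --- is the heart of the matter, and is presumably where the explicit description of $\Omega$ and the hypothesis that $\ell$ is near $\omega^\ast$ are really used. A secondary technical point is reconciling the globally defined barrel window with the grade-restriction windows of derived Kirwan surjectivity, i.e.\ checking that the $\Omega$-bundles form a set of objects of $\DCoh(\fX)$ to which the generation-by-resolutions argument genuinely applies.
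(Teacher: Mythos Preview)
Your approach is essentially the paper's approach: generate $\DCoh(\fX)$ by restricting the $\cO_X\otimes U$, then use Koszul-type complexes (pushed forward via Borel--Weil--Bott, following Weyman) that are supported on $G\cdot X^{\lambda\ge 0}$ and hence become exact on $X^{\rm ss}$, to inductively trade each $\cO_\fX\otimes V(\chi)$ for ones with $\chi$ closer to $\Omega$.  Two refinements in the paper are worth noting.

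First, the paper does \emph{not} find a single global monovariant.  Instead the descent is organized in three successive stages, each with its own invariant: (i) reduce from all of $M_\bR$ to the $\lambda_0$-strip $\theta+B_{\lambda_0}$ using the complex $C_{\lambda_0,\chi}$ and the measure $\alpha_\chi=2|\langle\lambda_0,\chi-\theta\rangle|/\eta_{\lambda_0}$; (ii) reduce from the strip to the cylinder $\theta+\overline\nabla$ using the complexes for $\lambda'\in(\omega^\ast)^\perp$ and the measure $r_\chi$ built from $\langle\lambda',\chi-\theta\rangle$; (iii) reduce from the cylinder to the barrel $\theta+\nabla$ by re-using $\alpha_\chi$ on the finitely many weights left on the boundary.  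Trying to run all walls at once with one invariant is exactly the ``cycling'' worry you flag; the staged argument avoids it.

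Second, your description ``reading the same exact complex from its opposite end'' is not quite what happens.  At the $\lambda'$-wall the paper has \emph{two distinct} complexes, $C_{\lambda',\chi}$ (whose error terms subtract $\lambda'$-negative weights of $X$) and $D_{\lambda',\chi}^\vee:=C_{-w_0\lambda',-w_0\chi}^\vee$ (whose error terms add $\lambda'$-positive weights), and one must choose between them according to whether $\langle\lambda_0,\chi-\theta\rangle$ lies below or above a threshold $Q_{\lambda'}=\tfrac12\eta_{\lambda_0}+\langle\lambda_0,\det X^{\lambda'<0}\rangle$.  This choice is precisely what guarantees the error weights $\mu^+$ stay inside the already-established $\lambda_0$-strip while their $\lambda'$-distance decreases; without it, Step~(ii) would undo Step~(i).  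So the ``main obstacle'' you identify is resolved not by a clever single monovariant but by this staged structure and the $C$/$D^\vee$ dichotomy.
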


In \Cref{S:GL_2} we describe in detail the case of representations of $\GL_2$. When $G = (\bG_m)^2$, many of the resulting GIT quotients are toric Deligne-Mumford stacks in the sense of \cite{BCS03}, and we show in \Cref{S:toric_examples} that in the Fano case one can recover the full strong exceptional collection constructed by Borisov and Hua in \cite{BH09}.

\medskip
\noindent \textbf{Barrel windows.}
\medskip

The region $\Omega$ in \Cref{T:main_summary} has the form $\theta + \nabla_\ell$, where $\theta \in M_\bR^W$ is a rational character of $G$ that is \emph{generic} with respect to $\lambda_0$ (see \Cref{D:lambda_0_generic} for the precise condition) and $\nabla_\ell \subset M_\bR$ is a certain convex subset described in \Cref{D:window}. The closure $\overline{\nabla}_\ell$ is a rational polytope that we call the ``cylinder window". It is defined by inequalities of the form
\[
-\eta_{\lambda}/2 \leq \langle \lambda, \chi \rangle \leq \eta_{\lambda}/2 \text{ with either } \lambda=\lambda_0 \text{ or } \langle \lambda, \ell \rangle = 0,
\]
where $\langle \lambda, \chi \rangle$ denotes the canonical pairing between a cocharacter $\lambda \in N_\bR := M_\bR^\ast$ and a weight $\chi \in M_\bR$. The numbers $\eta_{\lambda}$ are defined in \Cref{D:conormal_weight}. The polytope $\overline{\nabla}_\ell$ is the intersection of a ``strip" bounded by two $\lambda_0$-hyperplanes with the preimage of a certain polytope $\overline{\nabla}' \subset M_\bR / \bR \ell$. Hence the terminology ``cylinder." If we identify $M_\bR / \bR \ell$ with the weight lattice of $G_\ell := \ker(\ell)$, then $\overline{\nabla}'$ agrees with the polytope associated in \cite{HLS20} to $X$ regarded as a representation of $G_\ell$.

The subset $\nabla_\ell \subset \overline{\nabla}$ is obtained by removing certain points on the boundary $\partial \overline{\nabla}_\ell$ that have a large positive or negative pairing with $\lambda_0$. We refer to $\nabla_\ell$ as a ``barrel window" because the set of lattice points $(\theta + \nabla_\ell) \cap M$, which is the only thing that is relevant to determining when the weights of a $U \in \Rep(G)$ lie in $\theta + \nabla_\ell$, agrees with the set of lattice points $(\theta + \overline{\nabla}_{\ell,t}) \cap M$ for a small deformation $\overline{\nabla}_{\ell,t}$ of $\overline{\nabla}_\ell = \overline{\nabla}_{\ell,0}$. The region $\overline{\nabla}_{\ell,t}$ pinches the top and bottom of the cylinder $\overline{\nabla}_\ell$, giving it the appearance of a barrel. We explain this in \Cref{fig:grassmannian_N_6} and \Cref{R:t_perturbed_regions}.

\medskip
\noindent \textbf{The molecular perspective.}
\medskip

We now describe a perspective where representations for which \Cref{Q:main_question} has an affirmative answer can be built as ``molecules" from ``atoms" drawn from a shorter list of basic ``elemental" examples. 

Craw has constructed full strong exceptional collections of vector bundles on ``quiver flag varieties" \cite{C11}, significantly generalizing Kapranov's examples in \cite{K88}. The main observation behind Craw's result is that a quiver flag variety can be realized as an iterated fiber bundle, where each fiber is a Grassmannian. 
One can then reduce the construction of a full exceptional collection on the quiver flag variety to that of constructing (equivariant) full exceptional collections on Grassmannians.

We generalize Craw's observation as follows: One starts with a directed acyclic graph $Q$, i.e. a ``quiver", with set of vertices $Q_0$ and edges $Q_1$. Then one assigns to each vertex $i \in Q_0$ a reductive group $G_i$, a representation $V_i$ of $G_i$, and a ``framing" dimension $w_i \geq 0$. One then defines a linear representation $\Rep(Q)$ of $G_Q := \prod_{i \in Q_0} G_i$ as a product of several smaller spaces: one copy of $\Hom(V_i,V_j)$ for every edge $i \to j$, and one copy of $V_i^{w_i}$ for each vertex $i \in Q_0$.

Now imagine that for each $i \in Q_0$ we are given a GIT parameter $\ell_i$, i.e. a character of $G_i$, and a set of irreducible representations $\Omega_i$ of $G_i$ such that
\[
\{\cO_{V_i^{\rm ss}(\ell_i)} \otimes U : U \in \Omega_i \}
\]
form a full strong exceptional collection in $\DCoh(V_i^{\rm ss}(\ell_i)/G_i)$. 
In \Cref{P:decorated_quiver_fec} we show that there is then a canonical choice of GIT parameter $\ell_Q$ for $G_Q$ and a set $\Omega_Q$ of irreducible representations of $G_Q$ such that $\DCoh(\Rep(Q)^{\rm ss}(\ell_Q) / G_Q)$ also admits a full strong exceptional collection of tautological vector bundles of the form $\cO_{\Rep(Q)^{\rm ss}} \otimes U$ for $U \in \Omega_Q$.

Beilinson's theorem and Kapranov's generalization to Grassmannians provide elements of the form $(\GL_m, V=(k^m)^{\oplus n})$ for $n>m>0$, and Craw's results follow from \Cref{P:decorated_quiver_fec} applied to a quiver whose vertices are labeled with these elements. To the authors' knowledge, the only other elemental examples prior to this paper come from pairs $(G,V)$ where $G = \bG_m^n$ is a torus and the generic stabilizer is finite:
\begin{enumerate}
    \item When $G$ has rank two, or $\dim(V) - \dim(G) \leq 2$, and the GIT quotient is Fano, this is \cite{BH09} and \cite{CM04}.
    \item When $\dim(V)-\dim(G) = 3$ and the quotient is Fano, this is \cite{BT09} and \cite{U14}.
    \item When $G$ has rank three, some examples are given in \cite{LM11}, \cite{DLM09}, and \cite{H17}.
    \item When $\dim(V)-\dim(G) = 4$ and the quotient is Fano, this is \cite{PN17}.
    \item The GIT quotient $(\bP^1)^n /\!/ \bG_m$ with respect to the linearization $\cO_{\bP^1}(1)^{\boxtimes n}$ for $n$ odd, which is a toric Fano manifold of dimension $n-1$ and hence a linear GIT quotient by a torus, admits a full exceptional collection of line bundles \cite{CT20_1}*{Thm.~1.6}.
\end{enumerate}
From this perspective, \Cref{T:main_summary} produces more examples in this periodic table of elements when $G$ has rank $2$. 

\begin{rem}
The GIT quotient $(\bP^1)^n /\!/ \text{PGL}_2$ for $n$ odd is shown to admit a full exceptional collection consisting of vector bundles in \cite{CT20_2}*{Thm.~1.2}. 
Although this GIT quotient is not linear, there is a finite map $(\bP^1)^n /\!/ \text{PGL}_2 \rightarrow \bP(\Sym ^n (\bC^2)) /\!/ \text{PGL}_2 = \Sym^n (\bC^2) /\!/ \GL_2$, the latter of which falls within the scope of \Cref{T:main_fano}, but we have not investigated the relationship between these exceptional collections.
\end{rem}

\medskip
\noindent \textbf{Main steps in the proof of \Cref{T:main_summary}.}
\medskip

First, we show that the locally free sheaves of the form $\cO_{X^{\rm ss}} \otimes U$ for irreducible representations $U$ in the window $\theta + \nabla_\ell$ form a strong exceptional collection. It is straightforward to show that this is the case for the sheaves $\cO_X\otimes U$ in the equivariant category $\DCoh(X/G)$, so the key is a criterion for the vanishing of the invariant local cohomology that we can apply to show $R\Gamma_{X^{\rm us}}(\cO_X \otimes \Hom(U,V))^G = 0$, where $U,V \in \Rep(G)$ are irreducible with weights lying in $\theta + \nabla$, and $X^{\rm us} = X \setminus X^{\rm ss}$ is the unstable locus. We adapt the local cohomology results of Van den Bergh \cite{VdB91} for this purpose in \Cref{P:vanishing}.

Second, we show that when $G$ has rank $2$, $\ell$ is close to $\omega^\ast$, and $X^{\rm ss}(\ell)$ has finite stabilizers, then the sheaves $\cO_{X^{\rm ss}} \otimes U$ for $U$ with weights in $\theta + \nabla_{\omega^\ast}$ generate the derived category $\DCoh(X^{\rm ss}/G)$. We do this in \Cref{T:main_fano} and \Cref{T:main_nef_fano}. In both cases, we first show that $\DCoh(X^{\rm ss}/G)$ is generated by $\cO_{X^{\rm ss}} \otimes U$, where $U \in \Rep(G)$ ranges over all representations whose weights lie in a ``$\lambda_0$-strip" containing $\theta + \nabla_{\omega^\ast}$. Then we show that for any $U \in \Rep(G)$ whose weights lie in this ``$\lambda_0$-strip" but not in the barrel window $\theta+\nabla_{\omega^\ast}$, the locally free sheaf $\cO_{X^{\rm ss}} \otimes U$ on $X^{\rm ss} / G$ is quasi-isomorphic to an explicit complex of locally free sheaves whose terms admit a $G$-equivariant filtration with associated graded pieces of the form $\cO_{X^{\rm ss}} \otimes V$, where the weights of $V \in \Rep(G)$ still lie in the $\lambda_0$-strip and is closer to lying in $\theta + \nabla$.

\subsubsection*{Acknowledgements} We would like to thank Andr{\'e}s Fernandez Herrero, Wei Gu, J{\o}rgen Rennemo, and Michel Van den Bergh for helpful comments. This work was supported by the NSF CAREER grant DMS-1945478, the NSF FRG grant DMS-2052936, and the NSF grant DMS-1762669.

\section{Strong exceptional collections on linear GIT quotients}

\subsection{Preliminaries}

Throughout the paper, we fix a ground field $k$ of characteristic zero. We will use the following notation for a split reductive group $G$, which we do not assume is connected:
\begin{enumerate}
    \item $T \subset B \subset G$ is a fixed choice of split maximal torus and Borel subgroup.
    \item $M$ is the character lattice of $T$, i.e., the weight lattice of $G$, and $N$ is the cocharacter lattice of $T$. We denote $M_\bR = M \otimes_\bZ \bR$ and $N_\bR = N \otimes_\bZ \bR$.
    \item We denote the canonical pairing between $\lambda \in N_\bR$ and $\chi \in M_\bR$ by $\langle \lambda, \chi \rangle$.
    \item $W$ is the Weyl group of $G$ and $w_0$ is the longest element of $W$.
    \item The roots of $B$ will be the \emph{negative} roots and $\rho$ will denote the half sum of positive roots. The dominant chamber will be denoted $M^+$. 
    \item We say that $\lambda \in N_\bR$ is \emph{anti-dominant} if it pairs non-negatively with all negative roots.
    \item $G^\circ \subset G$ denotes the identity component.
\end{enumerate}

We will also use the following less standard notation:
\begin{defn}
If $G$ is a split reductive $k$-group and $\ell\in M_\bR$ we denote by $\ell^\perp\subset N_\bR$ the subset consisting of cocharacters that pair zero with $\ell$. If $\ell \in M_\bQ^W$ is non-zero, we let $G_\ell^\circ \subset G$ be the identity component of the kernel of the character $P \ell : G \to \bG_m$, where $P \in \bZ_{>0}$ is sufficiently divisible so that $P \ell \in M$. We fix a choice of maximal torus and Borel subgroup $T_\ell \subset B_\ell \subset G_\ell^\circ$ such that $T_\ell \subset T$ and $B_\ell \subset B$.
\end{defn}

Note that in the previous definition $\ker(P\ell)$ depends on the choice of $P$, but $\ker(P\ell) \subset \ker(PQ\ell)$ is a finite index subgroup for any $Q>0$, so the identity component $G^\circ_\ell$ is independent of $P$. 

Given a linear representation $X$ of $G$, we will abuse terminology slightly, by passing freely between regarding $X$ as a representation, i.e., an object of $\Rep(G)$, and as the affine $G$-scheme $\Spec(\Sym(X^*))$. We will use the following common construction:

\begin{defn}\label{D:parabolic_and_attracting_locus}
To any cocharacter $\lambda : \bG_m \to G$ one can associate the attracting locus $X^{\lambda \geq 0}$. If $\beta_1,\ldots, \beta_n \in M$ denote the weights of $X$, then 
\[
    \begin{array}{rl} X^{\lambda\geq 0} & = \left\{ x\in X:\lim_{t\rightarrow 0}\lambda(t)\cdot x \text{ exists}\right\} \vspace{.5em} \\ &= \Span \{ \beta_i : \langle \lambda, \beta_i \rangle \geq 0 \} \end{array} .
\]
We use similar notation $X^{\lambda>0}$, $X^{\lambda\leq0}$, etc., to denote the linear subspace of $X$ spanned by the weights whose pairing with $\lambda$ is $>0$, $\leq 0$, etc.. Note that if $\lambda \in N_\bR$ is anti-dominant, then $X^{\lambda \geq 0}$ and $X^{\lambda>0}$ are naturally linear representations of $B$. 
\end{defn}

Let $X$ be a linear representation of $G$, then the set of $G$-linearized line bundles on $X$ is identified with Weyl-invariant weights $M^W$. Given an $\ell\in M^W$, geometric invariant theory (GIT) identifies a $G$-equivariant open subset $X^{\rm ss}(\ell)\subset X$, called the \emph{semistable locus}, that admits a good quotient under the action of $G$. We will let $X^{\rm ss}(\ell)/G$ denote the quotient stack, and $X^{\rm ss}(\ell) /\!/ G$ its good quotient. The quotient $X^{\rm ss}(\ell) /\!/ G$ is projective over $\Spec(\cO_X^G)$, which is $\Spec(k)$ under \Cref{H:main_setup}.

Under \Cref{H:main_setup}, if points in $X^{\rm ss}(\ell)$ have finite stabilizers in $G$, then $X^{\rm ss}(\ell)/G$ is a smooth projective Deligne-Mumford stack (in the sense of \cite{K08}), and this will be our main case of interest. If there are points of $X^{\rm ss}(\ell)$ with positive dimensional stabilizers, it is sometimes, but not always, possible to perturb $\ell$ so that the new semistable locus has finite stabilizer groups, as we will discuss below.

The semistable locus $X^{\rm ss}(\ell) \subset X$ is defined to be the set of points $x\in X$ such that there exists an invariant global section $s\in \Gamma(\cO_X\otimes k \langle d\ell \rangle)^G$ for some $d>0$ which does not vanish at $x$, where $k \langle d\ell \rangle$ is the one dimensional representation with character $d\ell$. The Hilbert-Mumford criterion gives a more explicit description of the semistable locus via the formula
\[
X^{\rm us}(\ell) = \bigcup_{\lambda \in N, \text{ s.t. } \langle \lambda, \ell \rangle <0} G \cdot X^{\lambda \geq 0},
\]
where $X^{\rm us}(\ell) := X \setminus X^{\rm ss}(\ell)$ is the \emph{unstable locus}. In fact, only finitely many subspaces arise as $X^{\lambda \geq 0}$ for some $\lambda \in N$, so finitely many $\lambda$ suffice to cover the unstable locus.

We will make use of some standard results in the theory of variation of GIT quotients, developed in \cite{DH98}. First, we define $X^{\rm ss}(\ell)$ more generally for $\ell \in M_\bR^W$ using the fact that $X^{\rm ss}(\ell)$ for $\ell \in M^W$ depends only on the cell in which $\ell$ lies with respect to a decomposition of $M_\bR^W$ into a finite union of rational polyhedral cones. We can thus think about varying $\ell$ continuously. If $\ell_t \in M_\bR^W$ for $t \in [0,1]$ is a continuous family of real characters, then $X^{\rm ss}(\ell_t)$ is constant for $0<t\ll1$, and $X^{\rm ss}(\ell_t) \subset X^{\rm ss}(\ell_0)$ for $0 < t\ll 1$ with equality if points of $X^{\rm ss}(\ell_0)$ have finite stabilizers.

\begin{ex}\label{ex:vgit_near_anticanonical}
Under \Cref{H:main_setup}, we will consider $X^{\rm ss}(\omega^\ast) /G$. If points of $X^{\rm ss}(\omega^\ast)$ already have finite stabilizers, then $X^{\rm ss}(\omega^\ast + \epsilon \ell) = X^{\rm ss}(\omega^\ast)$ for any $\ell \in M^W_\bR$ and $0<\epsilon\ll 1$. However, if $X^{\rm ss}(\omega^\ast)$ has points with positive stabilizers, then $X^{\rm ss}(\omega^\ast + \epsilon \ell)$ for $0 < \epsilon \ll 1$ will depend on $\ell$. If $V \subset M_\bR^W$ is the linear subspace spanned by the smallest cone in the wall-and-chamber decomposition of $M_\bR^W$ coming from GIT, then $M_\bR^W / V$ inherits a wall-and-chamber decomposition into a finite union of rational polyhedral cones such that $X^{\rm ss}(\omega^\ast + \epsilon \ell)$ is constant for all $\ell$ whose image in $M^W_\bR$ lies in one of these cones.
\end{ex}

\subsection{The barrel window} \label{S:windows} 
Here we introduce some subsets of $M_\bR$, which we refer to as the ``cylinder" and ``barrel" windows, that are relevant for our main results. We work under \Cref{H:main_setup}.

\begin{defn}\label{D:conormal_weight}
Associate to any cocharacter $\lambda\in N_{\mathbb{R}}$ the $T$-weight
\[
    \zeta_{\lambda}:= -\det \left( X^{\lambda \leq 0} \right) + \det \left(\mathfrak{g}^{\lambda <0} \right) ,
\]
where $\mathfrak{g}$ is the Lie algebra of $G$ and the expression
$\det(U) := \wedge^{\dim U} U$ is the sum of the weights (with multiplicity) appearing in $U$. Define \[\eta_\lambda:= \left\langle \lambda, \zeta_\lambda \right\rangle.\]
More concretely, $\eta_\lambda = - \sum_{\beta \in \Psi} \min(0,\langle \lambda,\beta \rangle) + \sum_{\alpha \in \Phi} \min(0,\langle \lambda, \alpha \rangle)$, where $\Psi$ is the set of weights of $X$ and $\Phi$ the set of roots of $G$.
\end{defn}

If $\lambda$ is a one-parameter subgroup associated to a KN-stratum $S_\lambda$ with center $Z_\lambda$ in the sense of \cite{T00}, $\eta_\lambda$ is the total $\lambda$-weight of the conormal bundle of $S_\lambda$ in $X$ restricted to $Z_\lambda$.

\begin{defn}[$\lambda$-strip] \label{D:lambda_strip}
For any $\lambda\in N_\bR$, we refer to the closed subset
\[
B_\lambda := \left\{ \chi \in M_\bR : \lvert \left\langle \lambda,\chi\right\rangle \rvert  \le  \eta_{\lambda}/2 \right\} \subset M_\bR
\]
as the $\lambda$-\emph{strip}. Note that either $B_{\lambda} \subset B_{-\lambda}$ or $B_{-\lambda} \subset B_{\lambda}$, with equality if and only if $\eta_{-\lambda}=-\eta_{\lambda}$.
\end{defn}

\begin{defn}[Barrel window] \label{D:window}
Let $\lambda_0 \in N_\bR^W$ be a cocharacter that pairs strictly negatively with all weights of $X$. For any $\ell \in M_\bR^W$ with $\langle \lambda_0,\ell \rangle \neq 0$, we define the \emph{cylinder window} to be the closed subset
\[\overline{\nabla}_{\ell} :=\left\{ \chi\in B_{\lambda_0} :  \lvert \left\langle \lambda',\chi\right\rangle \rvert \leq \frac{\eta_{\lambda'}}{2} \text{ for all } \lambda' \in \ell^\perp \right\},\]
and we define the \emph{barrel window} to be the non-closed subset
\[\nabla_{\ell} := \left\{ \chi\in B_{\lambda_0} : \forall \lambda' \in \ell^{\perp}, \left\{
\begin{array}{l}
\lvert \left\langle \lambda',\chi\right\rangle \rvert < \frac{\eta_{\lambda'}}{2} ,\text{ or} \vspace{0.03in}\\
\left\langle \lambda',\chi\right\rangle = \frac{\eta_{\lambda'}}{2} \text{ and } \langle \lambda_0,\chi \rangle \leq \frac{\langle \lambda_0, \zeta_{\lambda'} \rangle}{2} , \text{ or} \vspace{0.03in}\\
\left\langle \lambda',\chi\right\rangle = \frac{-\eta_{\lambda'}}{2} \text{ and } \langle \lambda_0,\chi \rangle \geq  \frac{-\langle \lambda_0, \zeta_{\lambda'} \rangle}{2}
\end{array}
\right.  \right\}. \]

When $\ell = \omega^\ast$ and $\lambda_0 \in N_\bR^W$ is a cocharacter that satisfies the condition in \Cref{H:main_setup}, we will simply use $\overline{\nabla} := \overline{\nabla}_{\omega^\ast}$ and $\nabla := \nabla_{\omega^\ast}$.
\end{defn}

\begin{rem}
The cylinder window is the intersection of the $\lambda_0$-strip with the preimage of a polytope $\overline{\nabla}' \subset M_\bR/\bR\ell$. If we consider $X$ as a representation of $\ker(\ell) \subset G$, then under the identification of $M_\bR/\bR\ell$ with the weight space of $\ker(\ell)$, the polytope $\overline{\nabla}'$ corresponds to the one studied in \cite{HLS20} and it is related to the zonotope studied in \cite{SVdB17} if $X$ is quasi-symmetric (e.g. self-dual) as a representation of $\ker(\ell)$. 
\end{rem}

\begin{ex} \label{ex:grassmannian_N_6}
Let $G=\GL_2$ over the field $k$ and consider the $G$-linear representation $X=(k^2)^{\oplus 6}$. The weights of $X$ are $(1,0), (0,1)\in M\cong \bZ^2$ each with multiplicity $6$. The roots of $G$ are $(1,-1),(-1,1)$, where we take $(1,-1)$ to be the positive root. The subset $(\omega^\ast)^\perp\subset N_\bR$ consists of positive multiples of the cocharacters $\lambda'_1=(-1,1)$ and $\lambda'_2=(1,-1)$. 
Take $\lambda_0=(-1,-1)\in N_\bR^W$. A direct computation yields $\zeta_{\lambda_0} = (-6,-6)$, $\zeta_{\lambda_1'} = (-5,-1)$, and $\zeta_{\lambda'_2}=(-1,-5)$, thus $\eta_{\lambda_0} = 12$ and $\eta_{\lambda'_1}=\eta_{\lambda'_2}=4$. The resulting cylinder and barrel windows are described in \Cref{fig:grassmannian_N_6}.

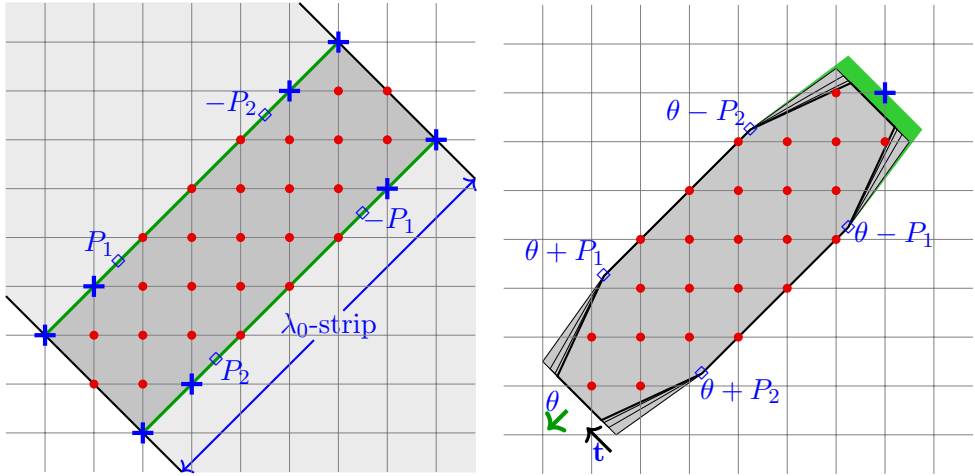
\begin{figure}[ht]
\begin{tabular}{ll}
\begin{tikzpicture}[scale=.60]

\filldraw[black!8!white] (-4.8, -1.2) -- (-4.8, 4.8) -- (1.2, 4.8) -- (4.8, 1.2) -- (4.8, -4.8) -- (-1.2,-4.8) -- cycle;
\filldraw[black!23!white] (-4,-2) -- (2,4) -- (4,2) -- (-2,-4) -- cycle;
\draw[-, very thick, green!60!black] (-4.0,-2.0) -- (2.0,4.0);
\draw[-, very thick, green!60!black] (4.0,2.0) -- (-2.0,-4.0);
\draw[-, thick, black] (-1.2,-4.8) -- (-4.8, -1.2);
\draw[-, thick, black] (1.2,4.8) -- (4.8, 1.2);

\draw[->, thick, blue] (2.1, -1.5) -- (4.8, 1.2);
\draw[->, thick, blue] (1.5, -2.1) -- (-1.2, -4.8);
\node[blue] at (1.8,-1.8) {$\lambda_0$-strip};

\draw[step=1, ultra thin, color=gray] (-4.8,-4.8) grid (4.8,4.8);

\foreach \i in { (-1,1), (-2,-1), (-1,0), (0,1), (1,2),  (-1,-1), (0,0), (1,1), (2,2), (-1,-2), (0,-1), (1,0) ,(2,1), (1,-1),(0,2),(2,0), (-2,0), (0,-2), (-2,-2),(-3,-2),(-2,-3), (-3,-3), (3,3), (2,3), (3,2)}
{ \filldraw[red!90!black] \i circle (2.3pt); }

\draw (-3,-1) pic[rotate=45, blue] {cross=4pt};
\draw (-1,-3) pic[rotate=45,blue] {cross=4pt};
\draw (-4,-2) pic[rotate=45,blue] {cross=4pt};
\draw (-2,-4) pic[rotate=45,blue] {cross=4pt};
\draw (3,1) pic[rotate=45,blue] {cross=4pt};
\draw (1,3) pic[rotate=45,blue] {cross=4pt};
\draw (4,2) pic[rotate=45,blue] {cross=4pt};
\draw (2,4) pic[rotate=45,blue] {cross=4pt};

\node[blue] at (-2.5,-0.5) {$\diamond$};
\node[blue] at (-0.5,-2.5) {$\diamond$};
\node[blue] at (2.5,0.5) {$\diamond$};
\node[blue] at (0.5,2.5) {$\diamond$};

\node[blue] at (2.7+0.35,0.6-0.25) {$-P_1$};
\node[blue] at (-2.5-0.39,-0.4+0.28) {$P_1$};
\node[blue] at (0-0.20,2.4+0.35) {$-P_2$};
\node[blue] at (-0.3+0.20,-2.4-0.35) {$P_2$};
\end{tikzpicture}
&
\begin{tikzpicture}[scale = .60]

\filldraw[green!60!gray] (-3.75 ,-2.25) -- (-2.5-0.25,-0.5-0.25) -- (0.5-0.25,2.5-0.25) -- (2.25,3.75) -- (3.75,2.25) -- (2.5-0.25,0.5-0.25) -- (-0.5-0.25,-2.5-0.25) -- (-2.25,-3.75) -- cycle;
\filldraw[black!21!white] (-3.75-0.25,-2.25-0.25) -- (-2.5-0.25,-0.5-0.25) -- (0.5-0.25,2.5-0.25) -- (2.25-0.25,3.75-0.25) -- (3.75-0.25,2.25-0.25) -- (2.5-0.25,0.5-0.25) -- (-0.5-0.25,-2.5-0.25)  -- (-2.25-0.25,-3.75-0.25) -- cycle;

\draw[-, ultra thin , color=black] (-3.75-0.25,-2.25-0.25) -- (-2.5-0.25,-0.5-0.25) -- (0.5-0.25,2.5-0.25) -- (2.25-0.25,3.75-0.25) -- (3.75-0.25,2.25-0.25) -- (2.5-0.25,0.5-0.25) -- (-0.5-0.25,-2.5-0.25)  -- (-2.25-0.25,-3.75-0.25) -- cycle;
\draw[-, very thin, color=black] (-3.75 +0.15-0.25,-2.25-0.15-0.25) -- (-2.5-0.25,-0.5-0.25) -- (0.5-0.25,2.5-0.25) -- (2.25+0.15-0.25,3.75-0.15-0.25) -- (3.75-0.15-0.25,2.25+0.15-0.25) -- (2.5-0.25,0.5-0.25) -- (-0.5-0.25,-2.5-0.25)  -- (-2.25-0.15-0.25,-3.75+0.15-0.25) -- cycle;
\draw[-, thin, color=black] (-3.75 +0.25-0.25,-2.25-0.25-0.25) -- (-2.5-0.25,-0.5-0.25) -- (0.5-0.25,2.5-0.25) -- (2.25+0.25-0.25,3.75-0.25-0.25) -- (3.75-0.25-0.25,2.25+0.25-0.25) -- (2.5-0.25,0.5-0.25) -- (-0.5-0.25,-2.5-0.25)  -- (-2.25-0.25-0.25,-3.75+0.25-0.25) -- cycle;
\draw[-, thick, color=black] (-3.75 +0.3-0.25,-2.25-0.3-0.25) -- (-2.5-0.25,-0.5-0.25) -- (0.5-0.25,2.5-0.25) -- (2.25+0.3-0.25,3.75-0.3-0.25) -- (3.75-0.3-0.25,2.25+0.3-0.25) -- (2.5-0.25,0.5-0.25) -- (-0.5-0.25,-2.5-0.25)  -- (-2.25-0.3-0.25,-3.75+0.3-0.25) -- cycle;

\draw[step=1, ultra thin, color=gray] (-4.8,-4.8) grid (4.8,4.8);

\draw[->, very thick, black] (-2.35-0.25,-4.10-0.25) -- (-2.85-0.25,-3.6-0.25);
\draw[->, ultra thick, color = green!60!black] (-3.50,-3.50) -- (-3.9,-3.9);
\node[blue] at (-2.6-0.25,-4.1-0.25) {\textbf{t}};
\node[blue] at (-3.8,-3.3) {$\theta$};

\foreach \i in { (-1,1), (-2,-1), (-1,0), (0,1), (1,2),  (-1,-1), (0,0), (1,1), (2,2), (-1,-2), (0,-1), (1,0) ,(2,1), (1,-1),(0,2),(2,0), (-2,0), (0,-2), (-2,-2),(-3,-2),(-2,-3), (-3,-3), (2,3), (3,2)}
{ \filldraw[red!90!black] \i circle (2.3pt); }
\draw (3,3) pic[rotate=45,blue] {cross=4pt};

\node[blue] at (-2.5-0.25,-0.5-0.25) {$\diamond$};
\node[blue] at (-0.5-0.25,-2.5-0.25) {$\diamond$};
\node[blue] at (2.5-0.25,0.5-0.25) {$\diamond$};
\node[blue] at (0.5-0.25,2.5-0.25) {$\diamond$};

\node[blue] at (3.1+0.35-0.25,0.6-0.25-0.25) {$\theta-P_1$};
\node[blue] at (-2.9-0.39-0.25,-0.3+0.28-0.25) {$\theta+P_1$};
\node[blue] at (-0.40-0.25,2.5+0.35-0.25) {$\theta-P_2$};
\node[blue] at (0.1+0.20-0.25,-2.5-0.35-0.25) {$\theta+P_2$};
\end{tikzpicture}
\end{tabular}

\caption{\footnotesize The diagram to the left is the cylinder window $\overline{\nabla}$ for \Cref{ex:grassmannian_N_6}. The $T$-weights indicated by blue crosses are those in the cylinder window that are excluded from the barrel window, thus the $T$-weights indicated by red dots are precisely those in the barrel window $\nabla$. We have marked the special weights $P_i := \zeta_{\lambda_i}/2$ for $i=1,2$ that become relevant in the passage from the cylinder window to the barrel window as is depicted in the diagram to the right. This diagram indicates a perturbation by some $\theta \in M_\bR^W$ of the regions  $\overline{\nabla}_{\ell,t}$ defined in \Cref{R:t_perturbed_regions} for selected values of $0 < t \ll 1$. In this particular example, $\theta= s \omega^\ast$, where $-s$ is a small positive number. Here the weight indicated by a blue cross is excluded in the $\theta$-perturbed barrel window $\theta + \nabla$.} \label{fig:grassmannian_N_6}
\end{figure}
\end{ex}

\begin{defn}[$\lambda_0$-generic] \label{D:lambda_0_generic}
Given a pair $(\lambda_0,\ell) \in N_\bR^W \times M_\bR^W$ such that $\langle \lambda_0, \ell \rangle \neq 0$, we say that $\theta \in M_\bR^W$ is \emph{$\lambda_0$-generic} with respect to $\ell$ if $(\theta + H) \cap M = \emptyset$ for every affine hyperplane $H$ of the form $\{\chi \in M_\bR : \langle \lambda_0, \chi-\zeta/2\rangle = 0\}$, where $\zeta = \zeta_{\lambda'}$ for some $\lambda' \in \ell^\perp$ (including $\lambda'=0$). We will say $\theta$ is $\lambda_0$-generic if it is $\lambda_0$-generic with respect to $\omega^\ast$.
\end{defn}

The notion of $\lambda_0$-generic is equivalent to $\theta$ avoiding all of the hyperplanes $\chi + \zeta_{\lambda'}/2 + \lambda_0^\perp$ for all $\chi \in M$ and $\lambda' \in \ell^\perp$. If $\lambda_0$ is rational, then this hyperplane arrangement is locally finite. In fact, the image of the lattice $M \subset M_\bQ$ under the linear map $\langle \lambda_0, - \rangle : M^\bQ \to \bQ$ will be a free subgroup of rank $1$, $\Gamma \subset \bQ$, and the hyperplane arrangement is contained in the preimage of $\frac{1}{2} \Gamma$ under this projection.

\begin{rem}[A characterization of the barrel window] \label{R:t_perturbed_regions}
Choose a $W$-invariant norm on $N_\bQ$. Let us rescale $\lambda_0$ so that $\lvert \lambda_0 \rvert=1$. For $t \ge 0$, define the closed subset
\[\overline{\nabla}_{\ell,t} =\left\{ \chi\in \overline{\nabla}_\ell :  \lvert \left\langle \lambda' + t\lambda_0,\chi\right\rangle \rvert \leq \frac{\langle \lambda'+t\lambda_0, \zeta_{\lambda'} \rangle}{2} \text{ } \forall \lambda' \in \ell^\perp  \text{ with } \lvert \lambda' \rvert = 1 \right\}.\]
One can show that if $\theta \in M_\bR^W$ is $\lambda_0$-generic with respect to $\ell$, then 
\[
\left(\theta + \overline{\nabla}_{\ell,t} \right) \cap M = \left(\theta + \nabla_\ell \right) \cap M \text{ for all } 0<t \ll 1.
\]
For $t$ small, the region $\overline{\nabla}_{\ell,t}$ represents a slight narrowing of the cylinder window $\overline{\nabla}_\ell$ at the top and bottom (see \Cref{fig:grassmannian_N_6}), which is why we refer to $\nabla_\ell$ as the ``barrel" window.
\end{rem}

\begin{proof}
Let $S$ denote the unit sphere in $\ell^\perp$. For each $\chi \in M_\bR$, define the function $F_{\chi -\theta} : [0, \infty) \times S \rightarrow \bR$ as
\[
F_{ \chi- \theta} (t, \lambda')=  \lvert \langle \lambda' + t \lambda_0, \chi -\theta \rangle \rvert - \langle \lambda' + t \lambda_0, \zeta_{\lambda'}/2 \rangle .
\]
Note that $F_{\chi - \theta}$ is continuous in both $t$ and $\lambda$.

A direct computation yields $\left( \theta + \overline{\nabla}_{\ell, t} \right) \cap M \subset \left( \theta + \nabla_\ell \right) \cap M$ for any $t>0$ and any $\theta \in M_\bR^W$. The result then follows from the following observations
\begin{enumerate}
    \item For $\chi \in \theta + \overline{\nabla}_\ell$ and any $\lambda'\in S$, if  $0 < t \le s$, then $F_{\chi -\theta} (t, \lambda') > 0$ implies $F_{\chi -\theta} (s, \lambda') > 0$.
    \item If $\theta$ is $\lambda_0$-generic with respect to $\ell$, then the containment 
    \begin{equation} \label{eq:t_regions_2} \left( \theta +  \bigcup\nolimits_{t>0} \overline{\nabla}_{\ell, t} \right) \cap M \subset \left( \theta + \nabla_\ell \right) \cap M
    \end{equation}
    is equality. 
\end{enumerate}
Indeed, assuming (1) holds implies that $\theta + \overline{\nabla}_{\ell, s} \subset \theta + \overline{\nabla}_{\ell, t}$ if $0 < t \leq s$. There are finitely many $T$-weights in $\theta + \bigcup_{t>0} \overline{\nabla}_{t,\ell}$, so one can find a $\tau > 0$ with $\left( \theta + \bigcup_{t>0} \overline{\nabla}_{\ell,t} \right) \cap M = \left( \theta + \overline{\nabla}_{\ell, s} \right) \cap M$ for all $s \in (0, \tau]$. The result then follows if (2) holds. 

\medskip
\noindent{\textit{Proof of (1) and (2)}:}
\medskip

The first claim follows by a direct calculation. For the second claim, let $\chi$ be a $T$-weight in $\theta + \left( \nabla_\ell \setminus \bigcup_{t>0} \overline{\nabla}_{\ell, t} \right)$. Observe that $\chi \not\in \theta + \rm Int \overline{\nabla}_{\ell}$. Indeed, $F_{ \chi- \theta} (t, \lambda')$
is continuous in both $t$ and $\lambda$ and by definition, $\chi - \theta \in \rm Int \overline{\nabla}_{\ell}$ if and only if $F_{ \chi - \theta} (0, \lambda') < 0$ for all $\lambda' \in S$. In particular, if $\chi - \theta \in \rm Int \overline{\nabla}_{\ell} $, then because $S$ is compact, $F_{\chi - \theta} (t,\lambda') < 0$ for all $0 \le t \ll 1$, so $\chi -\theta \in \overline{\nabla}_{\ell, t}$ for $t$ sufficiently small resulting in a contradiction.  

The fact that $\chi -\theta \not\in \rm Int \overline{\nabla}_{\ell}$ implies that the set 
\[
S_{ \chi -\theta} := \left\{ \lambda' \in S : \lvert \langle \lambda', \chi -\theta \rangle \rvert - \langle \lambda', \zeta_{\lambda'}/2 \rangle  = 0 \right\}
\]
is non-empty. We claim that there is a cocharacter $\lambda' \in S_{\chi-\theta}$ satisfying 
\begin{equation} \label{eq:t_regions_3} \lvert \langle \lambda_0, \chi -\theta \rangle \rvert - \langle \lambda_0, \zeta_{\lambda'}/2 \rangle = 0.\end{equation}
Because $\chi$ is a $T$-weight in $\theta + \left( \nabla_\ell \setminus \bigcup_{t>0} \overline{\nabla}_{\ell, t} \right)$ and because (1) holds, one has that for all $t>0$, there is a $\lambda'\in S$ (possibly depending on $t$) satisfying $F_{\chi -\theta}(s,\lambda') > 0$ for all $s \ge t$. Let $\{ t_i \}_{i=1}^\infty \subset (0,1]$ be a sequence converging to $0$ and for each $t_i$, let $\lambda'_i \in S$ be a cocharacter satisfying $F_{\chi -\theta}(s,\lambda'_i)  > 0$ for all $s \ge t_i$. Let $\lambda'$ denote the limit of a convergent subsequence of $\{ \lambda'_i \}$. Then $\lambda'$ satisfies  
\[ F_{\chi - \theta} (s, \lambda') \ge 0 \textrm{ for all } s \ge 0. \]
This, together with the fact that $\lvert \langle \lambda' , \chi -\theta \rangle \rvert \le \langle \lambda' , \zeta_{\lambda'}/2 \rangle$ (because $\chi -\theta \in \overline{\nabla}_{\ell}$ and $\lambda' \in S$) implies (a) $\lvert \langle \lambda' , \chi -\theta \rangle \rvert = \langle \lambda' , \zeta_{\lambda'}/2 \rangle$ and (b) $\lvert \langle  \lambda_0, \chi -\theta \rangle \rvert \ge \langle \lambda_0, \zeta_{\lambda'}/2 \rangle$. By assumption, $\chi - \theta \in \nabla_\ell$, so (a) implies $\lvert \langle  \lambda_0, \chi -\theta \rangle \rvert \le \langle \lambda_0, \zeta_{\lambda'}/2 \rangle$, which in addition to (b) implies $\lambda'$ satisfies \eqref{eq:t_regions_3}.

The existence of a cocharacter $\lambda' \in S_{\chi -\theta}$ satisfying \eqref{eq:t_regions_3} implies that the $T$-weight $\chi$ lies in one of the hyperplanes $\theta \pm \zeta_{\lambda'}/2 + \lambda_0^\perp$ and this is ruled out by the genericity assumption on $\theta$. Thus, there are no $T$-weights in $\theta + \left( \nabla_\ell \setminus \bigcup_{t>0} \overline{\nabla}_{\ell, t} \right)$.
\end{proof}

\subsection{A vanishing criterion for local cohomology}

Here we use the results of \cite{VdB91} to state a vanishing criterion for local cohomology. Recall from \Cref{D:conormal_weight} the canonical weight $\zeta_\lambda$ associated to a coweight $\lambda \in N$.

\begin{prop} \label{P:vanishing}
Assume \Cref{H:main_setup}. Let $\ell \in M_\bQ^W$ be such that $\langle \lambda_0, \ell \rangle < 0$ and let $U$ be a representation of $G$. If for every weight $\chi$ appearing in $U$ and for all cocharacters $\lambda' : \bG_m \to T_\ell$, there is a $\tau>0$ such that for all $ 0 < t < \tau$,
    \[
     \langle \lambda'+t \lambda_0, \chi-\zeta_{\lambda'} \rangle < 0,
    \]
then $R\Gamma_{X^{\rm us}}(\cO_X \otimes U)^G = 0$, where $X^{\rm us} = X^{\rm us}(\ell)$ is the $\ell$-unstable locus. Furthermore, if $G$ is connected and $U$ is irreducible, it suffices to check these conditions for the lowest weight $\chi$ appearing in $U$, and to verify the condition only for $\lambda' : \bG_m \to T_\ell$ that act with weights $\leq 0$ on the Lie algebra of $B$.
\end{prop}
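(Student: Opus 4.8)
The engine here is Van den Bergh's computation of invariant local cohomology along an unstable locus \cite{VdB91}, combined with the Kempf--Ness/Hesselink stratification. \textbf{First reductions.} Since $G/G^\circ$ is finite and $\mathrm{char}(k)=0$, the functor $(-)^{G/G^\circ}$ is exact, so $R\Gamma_{X^{\rm us}}(\cO_X\otimes U)^G = \bigl(R\Gamma_{X^{\rm us}}(\cO_X\otimes U)^{G^\circ}\bigr)^{G/G^\circ}$; moreover $X^{\rm us}(\ell)$ is the same whether we use $G$ or $G^\circ$, because $\ell$ is $W$-invariant and $X^{\rm us}(\ell)=\bigcup_{\langle\lambda,\ell\rangle<0}G\cdot X^{\lambda\geq 0}$ is manifestly $G$-stable, and because the hypothesis on $U$ only refers to data ($T_\ell$, $\lambda_0$, the $\zeta$'s) internal to $G^\circ$. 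Hence we may assume $G$ connected. Next, stratify $X^{\rm us}(\ell)=\bigsqcup_\alpha S_\alpha$ into finitely many locally closed strata $S_\alpha\cong G\times_{P_{\lambda_\alpha}} Y_\alpha$ for one-parameter subgroups $\lambda_\alpha$ (conjugated into $T$, and written uniquely as $\lambda_\alpha=\lambda'_\alpha+t_\alpha\lambda_0$ with $\lambda'_\alpha\in\ell^\perp\cap N(T)=N(T_\ell)$ and $t_\alpha>0$), with $Y_\alpha\subset X^{\lambda_\alpha\geq 0}$ open and retracting onto the center $Z_\alpha\subseteq X^{\lambda_\alpha}$. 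A dévissage along the induced filtration of $X^{\rm us}$ by closed subsets, using exactness of $(-)^G$, reduces the statement to showing $R\Gamma_{S_\alpha}(\cO_X\otimes U)^G=0$ for each $\alpha$, where the local cohomology is computed on the open complement of the deeper strata.

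\textbf{Per-stratum analysis.} Following \cite{VdB91}, $R\Gamma_{S_\alpha}(\cO_X\otimes U)^G$ is the $L_{\lambda_\alpha}$-invariant part of the cohomology over $Z_\alpha$ of the $L_{\lambda_\alpha}$-equivariant sheaf $\Sym\bigl((X^{\lambda_\alpha>0})^\ast\bigr)\otimes (U|_{Z_\alpha})\otimes\det(N_\alpha)\otimes\widehat{\Sym}(N_\alpha)$, placed in a single cohomological degree, where $N_\alpha:=N_{S_\alpha/X}|_{Z_\alpha}$; here $\det N_\alpha$ has $\lambda_\alpha$-weight $-\eta_{\lambda_\alpha}=-\langle\lambda_\alpha,\zeta_{\lambda_\alpha}\rangle$ (cf.\ \Cref{D:conormal_weight} and the subsequent remark), and both $\Sym((X^{\lambda_\alpha>0})^\ast)$ and $\widehat{\Sym}(N_\alpha)$ contribute only non-positive $\lambda_\alpha$-weights. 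Since $\lambda_\alpha(\bG_m)$ is central in $L_{\lambda_\alpha}$, any $L_{\lambda_\alpha}$-invariant has $\lambda_\alpha$-weight zero, so this contribution vanishes as soon as $\langle\lambda_\alpha,\chi\rangle<\eta_{\lambda_\alpha}$ for every weight $\chi$ of $U$. This is where centrality of $\lambda_0$ under \Cref{H:main_setup} is used: $\langle\lambda_0,\alpha\rangle=0$ for all roots gives $\mathfrak g^{\lambda_\alpha<0}=\mathfrak g^{\lambda'_\alpha<0}$, while $\langle\lambda_0,\beta\rangle<0$ for all weights $\beta$ of $X$ forces $X^{\lambda_\alpha\leq 0}\supseteq X^{\lambda'_\alpha\leq 0}$ with all the additional weights paired $\leq 0$ with $\lambda_\alpha$; hence $\zeta_{\lambda_\alpha}=\zeta_{\lambda'_\alpha}-\sum\beta$ over those weights and $\eta_{\lambda_\alpha}\geq\langle\lambda_\alpha,\zeta_{\lambda'_\alpha}\rangle=\langle\lambda'_\alpha+t_\alpha\lambda_0,\zeta_{\lambda'_\alpha}\rangle$. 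Thus it is enough that $\langle\lambda'_\alpha+t_\alpha\lambda_0,\chi-\zeta_{\lambda'_\alpha}\rangle<0$ for every weight $\chi$ of $U$. For strata with $Z_\alpha$ a point one may choose the representative $\lambda_\alpha$ with $t_\alpha$ arbitrarily small (any such $\lambda_\alpha$ defines the same $S_\alpha$), and the hypothesis supplies exactly this inequality for $0<t\ll1$. For strata with $Z_\alpha$ positive-dimensional the inequality can degenerate to an equality at the ``tuned'' $t_\alpha$; there the per-stratum contribution is itself a local-cohomology vanishing of the same kind for $L_{\lambda_\alpha}$ acting on $X^{\lambda_\alpha}$ (with $\lambda_0$ still central and negative on all weights, and with $\ell^\perp\cap N(T_\ell)$ restricting to the analogous set for $L_{\lambda_\alpha}$), so one runs an induction on $\dim X$, the trivial action being the base case.

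\textbf{The refinement for $G$ connected and $U$ irreducible.} Let $\chi_0$ be the lowest weight of $U$, so every weight of $U$ equals $\chi_0+\nu$ for a non-negative integral combination $\nu$ of roots of $B$. If $\lambda'$ acts with weights $\leq 0$ on $\mathrm{Lie}(B)$ then $\langle\lambda',\nu\rangle\leq 0$, and $\langle\lambda_0,\nu\rangle=0$ by centrality, so $\langle\lambda'+t\lambda_0,\chi\rangle\leq\langle\lambda'+t\lambda_0,\chi_0\rangle$ for every weight $\chi$; hence for such $\lambda'$ the condition over all weights of $U$ reduces to the condition at $\chi_0$. To reduce to such $\lambda'$: no root vanishes on $T_\ell$ (a root cannot be proportional to $\ell\in M^W$, as $\ell$ pairs to $0$ with all coroots), so $C_{G}(T_\ell)^\circ=T$ and $T$ is the unique maximal torus containing $T_\ell$; therefore $W_\ell$ is realized inside $N_G(T)$, acts on $M$, and permutes the weights of $U$. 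Given any cocharacter $\lambda'$ of $T_\ell$, choose $w\in W_\ell$ with $w\lambda'$ acting $\leq 0$ on $\mathrm{Lie}(B)$; using $w\lambda_0=\lambda_0$, $\zeta_{w\lambda'}=w\zeta_{\lambda'}$, and the $W_\ell$-permutation of the weights of $U$, the condition for $\lambda'$ is equivalent to the condition for $w\lambda'$. This yields the asserted reduction to the lowest weight and to $\lambda'$ acting $\leq 0$ on $\mathrm{Lie}(B)$.

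\textbf{Main obstacle.} The crux is the per-stratum step: one must set up Van den Bergh's local-cohomology formula so that it applies on the locally closed strata (the complements of deeper strata), and — more seriously — organize the induction that disposes of the strata with positive-dimensional centers. The delicate point there is to verify that restricting all of the relevant data ($\lambda_0$, the subspace $\ell^\perp$, the weights $\zeta$, and the window inequalities) from $(G,X)$ to $(L_{\lambda_\alpha},X^{\lambda_\alpha})$ again produces an instance of \Cref{P:vanishing}'s hypotheses, so that the inductive step is legitimate; everything else in the argument is bookkeeping with the pairings.
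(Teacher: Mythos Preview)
Your approach via a $G$-equivariant Hesselink stratification and per-stratum weight bounds differs substantially from the paper's, and as written contains a genuine gap.

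The paper does not stratify $X^{\rm us}(\ell)$ for the $G$-action at all. After an isogeny reduction to $G=\bG_m\times G_\ell^\circ$, it proves a key lemma (\Cref{L:instability}): $x$ is $\ell$-unstable if and only if $0\in\overline{G_\ell^\circ\cdot x}$. This identifies $X^{\rm us}(\ell)$ with the nullcone for the $G_\ell^\circ$-action, which is precisely the setting of \cite{VdB91}, and one invokes \cite{VdB91}*{Cor.~6.8} once, globally: every irreducible constituent of $R\Gamma_{X^{\rm us}}(\cO_X)$ has highest weight of the form $\chi'+\sum_{\alpha\in S}\alpha$ with $S\subset\Phi$ and $\chi'$ a $T$-weight of $R\Gamma_{X^{\lambda'>0}}(\cO_X)$ for some cocharacter $\lambda'$ of $T_\ell$. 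Computing the associated graded of the latter explicitly, one checks that $-\chi$ (for $\chi$ the lowest weight of $U$) cannot appear: the maximal $\lambda'$-weight in \eqref{E:local_cohomology_gr} gives the strict $\lambda'$-inequality, and the maximal $\lambda_0$-weight of that top graded piece \eqref{E:local_cohomology_gr2} gives the tie-breaking $\lambda_0$-inequality. The two-part condition thus arises from taking $G_\ell^\circ$-invariants and then $\bG_m$-invariants, not from perturbing an optimal destabilizing cocharacter.

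The gap in your argument is the sentence ``one may choose the representative $\lambda_\alpha$ with $t_\alpha$ arbitrarily small.'' The Hesselink stratum fixes $\lambda_\alpha$ up to positive scaling, which rescales $\lambda'_\alpha$ and $t_\alpha$ proportionally; the ratio is rigid and cannot be forced into $(0,\tau)$. Your correct inequality $\eta_{\lambda_\alpha}\geq\langle\lambda_\alpha,\zeta_{\lambda'_\alpha}\rangle$ does not rescue this: if $\langle\lambda'_\alpha,\chi-\zeta_{\lambda'_\alpha}\rangle<0$ but $\langle\lambda_0,\chi-\zeta_{\lambda'_\alpha}\rangle>0$, then $\langle\lambda'_\alpha+t\lambda_0,\chi-\zeta_{\lambda'_\alpha}\rangle$ becomes positive for large $t$, and nothing in your setup bounds the actual $t_\alpha$. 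The proposed induction for positive-dimensional centers is only a sketch and inherits the same issue. (There is also a sign slip in your refinement: from the lowest weight one reaches other weights by adding \emph{positive} roots, not roots of $B$, so with $\lambda'$ acting with weights $\leq 0$ on $\mathrm{Lie}(B)$ one gets $\langle\lambda',\chi\rangle\geq\langle\lambda',\chi_0\rangle$, the wrong direction; the paper instead uses the convex hull of the Weyl orbit of $\chi_0$.) The paper's route via \Cref{L:instability} sidesteps all of this by never needing to name a specific $t_\alpha$.
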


\begin{rem} \label{R:vanishing}
Because $\lambda_0$ is central, the condition on $\chi$ and $\lambda'$ can be rephrased as saying that $0 \geq \langle \lambda', \chi - \zeta_{\lambda'} \rangle = \langle \lambda', \chi+\det(X^{\lambda'<0}) + \det(\mathfrak{g}^{\lambda'>0}) \rangle$, and if equality holds then $0 > \langle \lambda_0, \chi - \zeta_{\lambda'} \rangle = \langle \lambda_0 , \chi + \det(X^{\lambda'\leq 0}) \rangle$. In the special case where $\lambda'$ is the trivial cocharacter, the condition simplifies to $\langle \lambda_0, \chi + \det(X) \rangle < 0$.
\end{rem}

The proof depends on a slight generalization of the classical characterization of semistability for a point in $\bP^n$ under the action of a reductive group.

\begin{lem} \label{L:instability}
Assume \Cref{H:main_setup} and let $\ell \in M_\bR^W$ be such that $\langle \lambda_0, \ell \rangle < 0$, then $x \in X$ is $\ell$-unstable if and only if $0 \in \overline{\{G^\circ_\ell \cdot x\}}$.
\end{lem}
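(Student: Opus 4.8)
The plan is to reduce the statement to the Hilbert–Mumford criterion combined with a standard fact about affine quotients. First I would use the Hilbert–Mumford description $X^{\rm us}(\ell) = \bigcup_{\langle \lambda, \ell\rangle < 0} G\cdot X^{\lambda\geq 0}$ recalled in the preliminaries. Since $\ell$ is $W$-invariant, it suffices to range over antidominant $\lambda$, and the key point is that $\langle \lambda_0,\ell\rangle < 0$ forces any such destabilizing $\lambda$ to lie ``on the same side'' as $\lambda_0$; more precisely, $x$ is $\ell$-unstable if and only if there is some $\lambda \in N_\bR$ with $\langle \lambda, \ell\rangle < 0$ and $\lim_{t\to 0}\lambda(t)\cdot x$ existing. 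I want to show this happens exactly when $0$ lies in the closure of the $G_\ell^\circ$-orbit of $x$.

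The backward direction is the easy one: if $0 \in \overline{G_\ell^\circ\cdot x}$, then since $G_\ell^\circ$ preserves the function defined by any section of $\cO_X\otimes k\langle d\ell\rangle$ up to a scalar that is \emph{trivial} on $G_\ell^\circ$ (because $d\ell$ restricts trivially to $G_\ell^\circ$ for $d$ divisible enough), every such invariant section vanishes at $x$ iff it vanishes at $0$, and it vanishes at $0$ since it is a section of a line bundle with nontrivial weight on a fixed point — so $x$ is unstable. For the forward direction, suppose $x$ is $\ell$-unstable, so by Hilbert–Mumford there is an antidominant $\lambda$ with $\langle \lambda, \ell\rangle < 0$ and $x \in X^{\lambda\geq 0}$, i.e. $y := \lim_{t\to 0}\lambda(t)\cdot x$ exists. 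Now decompose $\lambda$ using the central $\lambda_0$: write $\lambda = \lambda_\ell + c\,\lambda_0$ where $\lambda_\ell \in \ell^\perp$ (this is possible because $\langle \lambda_0,\ell\rangle\neq 0$, so we can subtract off the right multiple of $\lambda_0$; here $c = \langle\lambda,\ell\rangle/\langle\lambda_0,\ell\rangle > 0$ since both numerator and denominator are negative). The cocharacter $\lambda_0$ already contracts all of $X$ to $0$ as $t\to 0$ by \Cref{H:main_setup}, so $\lim_{t\to 0}\lambda_0(t)\cdot z = 0$ for every $z \in X$. Applying this to $z = y$ and using that $\lambda_\ell$ commutes with $\lambda_0$ (both land in $T$), one gets $\lim_{t\to 0}\lambda(t)\cdot x = \lim_{t\to 0}\lambda_0(t)^c \cdot(\lambda_\ell(t)\cdot \lim_{s\to 0}(\cdots))$; the cleanest way to organize this is: $\lim_{t\to 0}\lambda_\ell(t)\cdot x$ need not exist, but $\lim_{t\to 0}\lambda(t)\cdot x = y$ exists and is fixed by the torus generated by $\lambda$, and then applying $\lambda_0(t)$ for large negative... — I should instead argue that since $y$ is a $\lambda$-fixed point in the orbit closure of $x$, and the subgroup generated by $\lambda_\ell$ inside $G_\ell^\circ$ together with the scaling $\lambda_0$ lets us flow $x$ first toward $y$ and then $y$ toward $0$ via $\lambda_0$, and since $\lambda_0$-flow lands inside $\overline{G_\ell^\circ\cdot x}$ only if... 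Actually the correct bookkeeping: $y\in \overline{\langle\lambda\rangle\cdot x}$ and one checks directly that $\lambda_\ell(t)\cdot x$ and $\lambda(t)\cdot x$ have the same limit behavior \emph{up to the $\lambda_0$-scaling}, so $\lim_{t\to 0}\lambda_\ell(t)\cdot x$ exists and equals a point $y'$ with $\lambda_0(t)\cdot y' \to 0$; since $\lambda_\ell \subset G_\ell^\circ$ we get $y' \in \overline{G_\ell^\circ\cdot x}$, and then applying the $G_\ell^\circ$-action... no, $\lambda_0\notin G_\ell^\circ$.

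Let me restructure the forward direction more robustly. The clean statement to aim for is: $0\in\overline{G_\ell^\circ\cdot x}$ iff the function $f \mapsto f(x)$ vanishes for every $f \in \cO_X^{G_\ell^\circ}$ that also lies in $\cO_X \otimes k\langle -d\ell\rangle$-eigenspaces... — equivalently, iff $x$ is not semistable for the $G_\ell^\circ$-action with respect to \emph{some} character, but $G_\ell^\circ$ has no nontrivial characters in the relevant direction. The most efficient route is actually: $0\in\overline{G_\ell^\circ\cdot x}$ $\iff$ every $G_\ell^\circ$-invariant function vanishing at $0$ vanishes at $x$ $\iff$ (since $\cO_X^{G_\ell^\circ}$ is generated by its $d\ell$-semi-invariant pieces together with honest invariants, and honest $G$-invariants are trivial by \Cref{H:main_setup}) every $d$-power $\ell$-semi-invariant vanishes at $x$ $\iff$ $x \in X^{\rm us}(\ell)$. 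I would make the middle equivalence precise by noting $\cO_X^{G_\ell^\circ} = \bigoplus_{n\in\bZ} \Gamma(\cO_X \otimes k\langle n\ell\rangle)^{G}$ — this uses that $G/G_\ell^\circ$ is (up to finite data) $\bG_m$ acting through $\ell$, plus $\cO_X^G = k$. \textbf{The main obstacle} is precisely this identification of the $G_\ell^\circ$-invariant ring with the $\ell$-graded ring of $G$-semi-invariants, handling the discrepancy between $\ker(P\ell)$ and its identity component $G_\ell^\circ$ (a finite-index subtlety) and the possible disconnectedness of $G$; I expect to invoke reductivity of $G_\ell^\circ$, the decomposition of $\cO_X$ into $\ell$-weight spaces under the residual $\bG_m$, and a Reynolds-operator argument, with \Cref{H:main_setup} ensuring the degree-zero part is just $k$ so that $\Spec$ of the invariant ring is a cone with vertex $0$.
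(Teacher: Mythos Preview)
Your first approach to the forward direction is exactly the paper's approach, and you abandon it one step too early. You correctly write $\lambda = \lambda_\ell + c\lambda_0$ with $\lambda_\ell \in \ell^\perp$ and $c>0$, but then assert that $\lim_{t\to 0}\lambda_\ell(t)\cdot x$ ``need not exist.'' In fact it always exists and equals $0$, and this is the whole point. Decompose $x = \sum_{v,w} x_{v,w}$ into joint eigenvectors for $(\lambda_\ell,\lambda_0)$, where $x_{v,w}$ has $\lambda_\ell$-weight $v$ and $\lambda_0$-weight $w$. By \Cref{H:main_setup} every $w$ with $x_{v,w}\neq 0$ is strictly negative. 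The existence of $\lim_{t\to 0}\lambda(t)\cdot x$ forces $v + cw \ge 0$ whenever $x_{v,w}\neq 0$, hence $v \ge -cw > 0$. Thus every nonzero component of $x$ has strictly positive $\lambda_\ell$-weight, so $\lim_{t\to 0}\lambda_\ell(t)\cdot x = 0$. After clearing denominators so that $\lambda_\ell$ is an honest cocharacter of $G_\ell^\circ$, this witnesses $0 \in \overline{G_\ell^\circ\cdot x}$ via Hilbert--Mumford for $G_\ell^\circ$.

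Your second approach via $\cO_X^{G_\ell^\circ}$ can be made to work, and the grading argument you sketch (that $\cO_X^{G_\ell^\circ}$ is non-negatively $\lambda_0$-graded with degree-zero part $k$, so its spectrum is a cone with vertex the image of $0$) is the right idea. But it is more laborious than necessary: you have to sort out the sign of the $\lambda_0$-grading versus the sign of $\langle\lambda_0,\ell\rangle$, and handle the finite-index discrepancy between $\ker(P\ell)$, $G_\ell^\circ$, and $G^\circ$. The paper avoids all of this by staying at the level of cocharacters.
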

\begin{proof}
By the Hilbert-Mumford criterion, it suffices to show that $\ell$-instability is equivalent to the existence of a cocharacter $\lambda' : \bG_m \to G^\circ_\ell$ such that $\lim_{t \to 0} \lambda(t) \cdot x = 0$. In one direction, if there is such a cocharacter $\lambda' : \bG_m \to G^\circ_\ell$, then for $N \gg 0$, $\lim_{t \to 0} \lambda'(t)^N \lambda_0(t) x = 0$ as well, and it is destabilizing with respect to $\ell$ by hypothesis.

Conversely, after a positive rescaling, any $\ell$-destabilizing cocharacter for $x$ has the form $\lambda = \lambda' + a \lambda_0$, where $\lambda'$ is a cocharacter of $G^\circ_\ell$ and $a>0$. We can decompose $x$ into joint eigenvectors
\[
x = \sum_{v,w\in \bZ, w<0} x_{v,w},
\]
where $x_{v,w}$ has weight $v$ for $\lambda'$ and weight $w$ for $\lambda_0$. Then the fact that $\lim_{t\to 0} \lambda(t) \cdot x$ exists means that $v + aw \geq 0$ whenever $x_{v,w} \neq 0$, and hence that $v \geq -aw>0$. It follows that $\lim_{t\to 0} \lambda'(t)\cdot x = 0$.
\end{proof}

\begin{proof}[Proof of \Cref{P:vanishing}]
Let $G^\circ \subset G$ denote the identity component. The homomorphism $\bG_m \times G_\ell^\circ \to G^\circ$ taking $(z,g) \mapsto \lambda_0(z) g$ is surjective. For any $G$-representation $M$, we have
\[
M^G = (M^{G^\circ})^{G/G^\circ} = (M^{\bG_m \times G^\circ_\ell})^{G/G^\circ}.
\]
In addition, the unstable locus $X^{\rm us}(\ell)$ is unaffected by isogenies of $G$, so it suffices to show the vanishing of $(R\Gamma_{X^{\rm us}}(\cO_X \otimes U))^{\bG_m \times G^\circ_\ell}$. The criterion of the proposition is also unaffected by isogenies, so we may therefore replace $G$ with $\bG_m \times G^\circ_\ell$ for the remainder of the proof. We may also assume $T=\bG_m \times T_\ell$ and $B=\bG_m \times B_\ell$.

We use the methods of \cite{VdB91} to bound the weights of $R\Gamma_{X^{\rm us}}(\cO_X)$ as a $G$-representation.

First of all, \Cref{L:instability} implies that the $\ell$-unstable locus for the action of $G$ on $X$, in the sense of GIT, agrees with the unstable locus for the action of $G^\circ_\ell$ on $X$ as studied in \cite{VdB91}. Then \cite{VdB91}*{Cor.~6.8} says that every irreducible $G^\circ_\ell$-representation that occurs in $R\Gamma_{X^{\rm us}}(\cO_X)$ has a highest weight of the form
\begin{equation} \label{E:highest_weight}
\chi_{hi} = \chi' + \sum_{\alpha \in S \subset \Phi} \alpha
\end{equation}
where $S \subset \Phi$ is a subset of the roots of $G^\circ_\ell$, and $\chi'$ is some character occurring in the $T_\ell$-representation $R\Gamma_{X^{\lambda'>0}}(\cO_X)$ for some cocharacter $\lambda'$ of $T_\ell$. The proof of this claim uses the spectral sequence associated to a certain double complex, constructed in \cite{VdB91}*{Lem.~5.2.2}, whose differentials are equivariant for the larger group $G$, not just for $G^\circ_\ell$. As a result, the arguments used to establish \cite{VdB91}*{Cor.~6.8} apply verbatim to conclude that every irreducible $G$-representation occurring in $R\Gamma_{X^{\rm us}}(\cO_X)$ has highest weight of the form \eqref{E:highest_weight}, where $\chi'$ is a character for the maximal torus $T \subset G$ that occurs in the $T$-representation $R\Gamma_{X^{\lambda'>0}}(\cO_X)$ for some cocharacter $\lambda'$ of $T_\ell$, and we regard roots of $G^\circ_\ell$ canonically as weights for $T=\bG_m \times T_\ell$ by letting the first factor of $\bG_m$ act trivially.

If $U$ is irreducible with lowest weight $\chi$, then to show that $R\Gamma_{X^{\rm us}}(\cO_X \otimes U)^G=0$, it suffices to verify that the highest weight of $U^\ast$, which is $-\chi$, can not have the form \eqref{E:highest_weight} discussed in the previous paragraph. For any closed subscheme $i : S \hookrightarrow X$ defined by an ideal $I_S \subset \cO_X$, the formula $R\Gamma_S \cO_X \cong \colim \RHom(\cO_X/I_S^n,\cO_X)$ gives a convergent non-negative filtration whose $n^{th}$ associated graded complex is $\RHom(i_\ast(I_S^n/I_S^{n+1}), \cO_X)$ for $n\geq 0$. The closed immersion $X^{\lambda' >0} \hookrightarrow X$ is regular, so Grothendieck's formula for $i^{!}(\cO_X)$ identifies the associated graded complex of $R\Gamma_{X^{\lambda'>0}}(\cO_X)$ with
\[
\cO_{X^{\lambda'>0}} \otimes \Sym(X^{\lambda'\leq 0}) \otimes \det(X^{\lambda' \leq 0})[-\dim(X^{\lambda'\leq 0})].
\]
It therefore suffices to show that $-\chi$ does not appear as a weight in the $T$-representation
\begin{equation} \label{E:local_cohomology_gr}
\Sym\left((X^{\lambda'>0})^\ast \oplus X^{\lambda' \leq 0}\right) \otimes \det(X^{\lambda' \leq 0}) \otimes \wedge^\ast \mathfrak{g}
\end{equation}
for any cocharacter $\lambda'$ of $T_\ell$ (including $\lambda'=0$). The highest $\lambda'$-weight appearing in \eqref{E:local_cohomology_gr} is $-\eta_{\lambda'} = \langle \lambda' , \det(X^{\lambda'\leq 0}) + \det(\mathfrak{g}^{\lambda'>0}) \rangle$, and the summand with this $\lambda'$-weight is
\begin{equation} \label{E:local_cohomology_gr2}
\Sym\left( X^{\lambda' = 0} \right) \otimes \det(X^{\lambda' \leq 0}) \otimes \det(\mathfrak{g}^{\lambda'>0}) \otimes \wedge^\ast (\mathfrak{g}^{\lambda'=0}).
\end{equation}
The highest $\lambda_0$-weight in \eqref{E:local_cohomology_gr2} is $\langle \lambda_0, \det(X^{\lambda'\leq 0}) \rangle < 0$. It follows that if either 1) $\langle \lambda', \chi \rangle < \eta_{\lambda'}$ or 2) $\langle \lambda', \chi \rangle = \eta_{\lambda'}$ and $\langle \lambda_0, \chi \rangle < - \langle \lambda_0, \det(X^{\lambda'\leq 0}) \rangle$, then $-\chi$ does not appear as a weight of \eqref{E:local_cohomology_gr}.

To complete the proof we must show that if $G$ is connected and $U$ is irreducible with lowest weight $\chi$, then all of the weights appearing in $U$ satisfy the condition with respect to any cocharacter of $T'$. By the Weyl character formula, all of the weights of $U$ lie in the convex hull of $W \cdot \chi$,\footnote{The Weyl character formula says that $\Ch(U) \cdot \sum_{w \in W} \varepsilon(w) w(\rho) = \sum_{w \in W} \varepsilon(w) w(\chi+\rho)$, where $\rho$ is the half-sum of negative roots. If one chooses a generic coweight $\lambda$ and keeps only the terms with the lowest pairing with $\lambda$, one gets $\Ch(U)^{\lambda\rm{-min}} \cdot \varepsilon(w) w(\rho) = \varepsilon(w) w(\chi+\rho)$, where $w \in W$ is the unique element such that $w(\lambda)$ pairs positively with all dominant weights. So for a dense set of coweights $\lambda$, the minimum $\lambda$ weight in $U$ is always $\geq \langle \lambda, w(\chi) \rangle$ for some $w \in W$. This implies that the convex hull of the weights in $\Ch(U)$ lies in the convex hull of $W \cdot \chi$.} so it suffices to verify the inequalities for the characters $w \chi$ with $w \in W$. Because $w \lambda_0 = \lambda_0$ and $w \zeta_{\lambda'} = \zeta_{w \lambda'}$, the inequality $\langle w \lambda' + t \lambda_0, \chi - \zeta_{w \lambda'} \rangle<0$ is equivalent to $\langle \lambda' + t \lambda_0, w^{-1} \chi - \zeta_{\lambda'} \rangle<0$, so we may assume $\lambda'$ pairs non-negatively with the Lie algebra of $B$. In that case, $\langle\lambda',w \chi\rangle \leq \langle \lambda',\chi \rangle$ and $\langle\lambda_0,w \chi\rangle = \langle \lambda_0,\chi \rangle$, so the inequality for the characters $w\chi$ follows from that for $\chi$.
\end{proof}

\begin{ex} \label{Ex:allowable_region}
Consider the $\GL_2$ representation $X = \Sym^3 \bC^2$ and take $\lambda_0 = (-1,-1)$. The subspace $(\omega^\ast) ^\perp \subset N_\bR$ consists of zero and positive multiples of $\lambda_1^{\omega^\ast} = (-1,1)$ and $\lambda_2^{\omega^\ast} = (1,-1)$, so these cocharacters suffice to describe a region in $M_\bR$ consisting of weights that satisfy the inequality in \Cref{P:vanishing}. Any $\GL_2$-representation whose weights lie in this region gives a vector bundle that has vanishing local cohomology with respect to $X^{\rm us}$. By \Cref{R:vanishing}, this ``allowable" region is
\[
    \left\{ \chi \in M_\bR : \textrm{for } \lambda' = 0, \lambda_1^{\omega^\ast}, \lambda_2^{\omega^\ast}, \left\{ \begin{array}{ll}
        \langle \lambda', \chi - \zeta_{\lambda'} \rangle < 0, \textrm{ or } \\
        \langle \lambda', \chi - \zeta_{\lambda'} \rangle = 0 \textrm{ and } \langle \lambda_0, \chi - \zeta_{\lambda'} \rangle < 0
    \end{array}  \right.\right\}  .
\]
In this particular example, $\zeta_{\lambda_1^{\omega^\ast}} = (-4,-2)$ and $\zeta_{\lambda_2^{\omega^\ast}} = (-2,-4)$. The resulting region is shown in \Cref{fig:allowable_region}. 

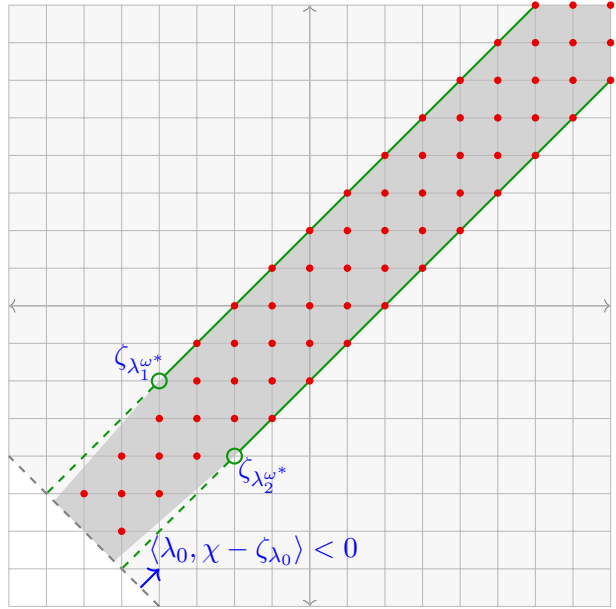
\begin{figure}[ht]
\begin{tikzpicture}[scale=1]

\fill[fill=gray,fill opacity=0.3]   (-3.4,-2.6) -- (-2,-1) -- (3,4) -- (4,4) -- (4,3) -- (-1,-2) -- (-2.6,-3.4) -- cycle;

\draw[<->, thin, color=gray] (-4,0) -- (4,0);
\draw[<->, thin, color=gray] (0,-4) -- (0,4);
\draw[step=0.5, thin, color=gray!50!white] (-4,-4) grid (4,4);

\draw[-, thick, green!60!black] (-1.93,-0.93) -- (3,4);
\draw[-, thick, green!60!black] (-0.93,-1.93) -- (4,3);
\draw[dashed, thick, green!60!black] (-2.07,-1.07) -- (-7/2,-5/2);
\draw[dashed, thick, green!60!black] (-1.07,-2.07) -- (-5/2, -7/2);

\draw[ thick, green!60!black] (-2,-1) circle (2.8 pt);
\draw[ thick, green!60!black] (-1,-2) circle (2.8 pt);
\node[blue] at (-4.5/2,-1.5/2) {$\zeta_{\lambda^{\omega^\ast}_1}$};
\node[blue] at (-1.2/2,-4.5/2) {$\zeta_{\lambda^{\omega^\ast}_2}$};

\draw[dashed, thick, color=gray] (-2,-4) -- (-4,-2);
\fill[fill=gray,fill opacity=0.06]   (-2,-4) -- (-4,-2) -- (-4,4) -- (4,4) -- (4,-4) -- cycle;
\draw[->, thick, color=blue] (-4.5/2,-7.5/2) -- (-4/2,-7/2);
\node[blue] at (-1.5/2,-6.5/2) {$\langle \lambda_0, \chi - \zeta_{\lambda_0} \rangle < 0$};

\foreach \i in { (-3/2, -1/2), (-2/2,0/2), (-1/2,1/2), (0/2,2/2), (1/2,3/2), (2/2,4/2), (3/2,5/2), (4/2,6/2), (5/2,7/2), (6/2,8/2), (-6/2,-5/2), (-5/2,-4/2), (-4/2,-3/2), (-3/2, -2/2), (-2/2,-1/2), (-1/2,0/2), (0/2,1/2), (1/2,2/2), (2/2,3/2), (3/2,4/2), (4/2,5/2), (5/2,6/2), (6/2,7/2), (7/2, 8/2), (-5/2,-5/2), (-4/2,-4/2), (-3/2,-3/2), (-2/2, -2/2), (-1/2,-1/2), (-0/2,0/2), (1/2,1/2), (2/2,2/2), (3/2,3/2), (4/2,4/2), (5/2,5/2), (6/2,6/2), (7/2,7/2), (8/2, 8/2), (-5/2,-6/2), (-4/2,-5/2), (-3/2,-4/2), (-2/2, -3/2), (-1/2,-2/2), (-0/2,-1/2), (1/2,0/2), (2/2,1/2), (3/2,2/2), (4/2,3/2), (5/2,4/2), (6/2,5/2), (7/2,6/2), (8/2, 7/2), (-1/2, -3/2), (-0/2,-2/2), (1/2,-1/2), (2/2,0/2), (3/2,1/2), (4/2,2/2), (5/2,3/2), (6/2,4/2), (7/2,5/2), (8/2,6/2)}
{ \filldraw[red!90!black] \i circle (1.2 pt); }

\end{tikzpicture}

\caption{\footnotesize This diagram illustrates the ``allowable" region for \Cref{Ex:allowable_region}. The red circles indicate the $T$-weights that satisfy the inequality in \Cref{P:vanishing}.} \label{fig:allowable_region}
\end{figure}

\end{ex}

\begin{cor} [Fully-faithfulness] \label{C:fully_faithful_general}
Assume \Cref{H:main_setup}, and let $\ell \in M_\bR^W$ be such that $\langle \lambda_0, \ell \rangle <0$. Let $\theta \in M_\bR^W$ be $\lambda_0$-generic with respect to $\ell$. If $V,W \in \Rep(G)$ are two representations whose weights are contained in $\theta + \nabla_\ell \subset M_\bR$, then restriction gives a quasi-isomorphism 
\[
\RHom_X(\cO_X \otimes V, \cO_X \otimes W)^G \cong \RHom_{X^{\rm ss}(\ell)}(\cO_{X^{\rm ss}(\ell)} \otimes V, \cO_{X^{\rm ss}(\ell)} \otimes W)^G.
\]
\end{cor}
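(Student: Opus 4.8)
The plan is to realize the restriction morphism as one edge of the localization triangle along the unstable locus, and then reduce the whole statement to the vanishing criterion \Cref{P:vanishing}. Set $\cF := \cO_X \otimes \Hom(V,W)$, the $G$-equivariant locally free sheaf on $X$ underlying $\sheafhom_{\cO_X}(\cO_X\otimes V,\cO_X\otimes W)$, and let $j : X^{\rm ss}(\ell)\hookrightarrow X$ be the inclusion, with closed complement $X^{\rm us}$. For any $\cG\in\Dqc(X)$ there is a triangle $R\Gamma_{X^{\rm us}}(\cG)\to R\Gamma(X,\cG)\to R\Gamma(X^{\rm ss}(\ell),j^\ast\cG)\xrightarrow{+1}$. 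Applying this to $\cG=\cF$ and taking $G$-invariants --- which is exact, as $G$ is linearly reductive in characteristic zero --- produces a triangle whose middle term is $R\Gamma(X,\cF)^G = \RHom_X(\cO_X\otimes V,\cO_X\otimes W)^G$ (concentrated in degree zero, since $X$ is affine and $\cF$ is locally free), whose right-hand term is $\RHom_{X^{\rm ss}(\ell)}(\cO_{X^{\rm ss}(\ell)}\otimes V,\cO_{X^{\rm ss}(\ell)}\otimes W)^G$, and whose middle map is the restriction map of the statement. Hence the claim reduces to showing $R\Gamma_{X^{\rm us}}(\cO_X\otimes\Hom(V,W))^G = 0$.

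For that I would apply \Cref{P:vanishing} to the representation $U := \Hom(V,W)\cong V^\vee\otimes W$ (replacing $\ell$ by a rational character in its GIT chamber first if necessary, which affects neither $X^{\rm ss}(\ell)$ nor the inequalities below). The $T$-weights of $U$ are precisely the differences $\chi=\nu-\mu$ with $\nu$ a weight of $W$ and $\mu$ a weight of $V$; by hypothesis $\nu,\mu\in(\theta+\nabla_\ell)\cap M$, so $\nu-\theta$ and $\mu-\theta$ both lie in $\nabla_\ell$. By \Cref{R:vanishing}, the hypothesis of \Cref{P:vanishing} holds for such a $\chi$ as soon as, for every cocharacter $\lambda':\bG_m\to T_\ell$ (in particular $\lambda'\in\ell^\perp$, since $T_\ell\subset\ker(P\ell)$), one has $\langle\lambda',\chi\rangle\le\eta_{\lambda'}$, with the strict inequality $\langle\lambda_0,\chi-\zeta_{\lambda'}\rangle<0$ in the case $\langle\lambda',\chi\rangle=\eta_{\lambda'}$.

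The inequality $\langle\lambda',\chi\rangle\le\eta_{\lambda'}$ is immediate: since $\nabla_\ell\subset\overline{\nabla}_\ell\subset B_{\lambda'}$, both $\lvert\langle\lambda',\nu-\theta\rangle\rvert$ and $\lvert\langle\lambda',\mu-\theta\rangle\rvert$ are $\le\eta_{\lambda'}/2$, so $\langle\lambda',\chi\rangle=\langle\lambda',(\nu-\theta)-(\mu-\theta)\rangle\le\eta_{\lambda'}$; note that the windows are cut out symmetrically in $\lambda'$ and $-\lambda'$, so negative cocharacters require no separate treatment. The point where the hypotheses really get used --- and what I expect to be the only genuine obstacle --- is the equality case. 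If $\langle\lambda',\chi\rangle=\eta_{\lambda'}$, then the two bounds above must both be attained: $\langle\lambda',\nu-\theta\rangle=\eta_{\lambda'}/2$ and $\langle\lambda',\mu-\theta\rangle=-\eta_{\lambda'}/2$. Here the definition of the \emph{barrel} window $\nabla_\ell$ (as opposed to the cylinder window $\overline{\nabla}_\ell$) enters: the second alternative of \Cref{D:window} applied to $\nu-\theta$, and the third applied to $\mu-\theta$, give $\langle\lambda_0,\nu-\theta\rangle\le\langle\lambda_0,\zeta_{\lambda'}\rangle/2$ and $\langle\lambda_0,\mu-\theta\rangle\ge-\langle\lambda_0,\zeta_{\lambda'}\rangle/2$, whence $\langle\lambda_0,\chi-\zeta_{\lambda'}\rangle\le 0$. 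Finally, equality here would force $\langle\lambda_0,(\nu-\theta)-\zeta_{\lambda'}/2\rangle=0$, placing the weight $\nu\in M$ on the affine hyperplane $\theta+\{\chi:\langle\lambda_0,\chi-\zeta_{\lambda'}/2\rangle=0\}$, which is excluded by the $\lambda_0$-genericity of $\theta$ with respect to $\ell$. Thus $\langle\lambda_0,\chi-\zeta_{\lambda'}\rangle<0$, so \Cref{P:vanishing} applies and yields the required vanishing. With the cylinder window alone one would only obtain $\langle\lambda_0,\chi-\zeta_{\lambda'}\rangle\le 0$, which is not enough; it is precisely the boundary-trimming defining $\nabla_\ell$, together with the genericity of $\theta$, that upgrades this to a strict inequality. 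Everything else --- the localization triangle, exactness of invariants, the identification of the $\RHom$-complexes, and the enumeration of the weights of $\Hom(V,W)$ --- is formal.
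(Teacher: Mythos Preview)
Your argument is correct and follows essentially the same route as the paper: reduce to the vanishing of $R\Gamma_{X^{\rm us}}(\cO_X\otimes W\otimes V^\ast)^G$, then verify the hypotheses of \Cref{P:vanishing} for every difference of weights by first bounding $\langle\lambda',\chi\rangle\le\eta_{\lambda'}$ via the cylinder inequalities, and in the equality case use the barrel-window boundary conditions plus $\lambda_0$-genericity of $\theta$ to force $\langle\lambda_0,\chi-\zeta_{\lambda'}\rangle<0$. Your treatment is slightly more explicit (e.g., spelling out the localization triangle and noting the passage to a rational $\ell$ in the same chamber), but the substance is identical.
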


\begin{proof}
This is equivalent to showing that $R\Gamma_{X^{\rm us}(\ell)}(\cO_X \otimes W \otimes V^\ast)^G=0$. Any weight $\chi$ of $W \otimes V^\ast$ has the form $\chi_2 - \chi_1$ for some $\chi_1,\chi_2 \in (\theta + \nabla_\ell) \cap M$. So it suffices to show that any such $\chi$ satisfies the inequalities of \Cref{P:vanishing}. We must show that for any $\lambda' \in \ell^\perp$, one has
\[
\langle \lambda', \chi_2 - \chi_1 \rangle \leq \eta_{\lambda'},
\]
and if equality holds then $\langle \lambda_0, \chi_2-\chi_1-\zeta_{\lambda'} \rangle < 0$.

The first inequality follows immediately from \Cref{D:window}, because $\langle \lambda', \chi_2 \rangle - \langle \lambda',\chi_1 \rangle \leq \lvert \langle \lambda', \chi_2 - \theta \rangle \rvert + \lvert \langle \lambda',\chi_1 - \theta \rangle \rvert$. Equality holds if and only if $\langle \lambda', \chi_2-\theta \rangle = \eta_{\lambda'}/2$ and $\langle \lambda', \chi_1-\theta \rangle = -\eta_{\lambda'}/2$. If this is the case, then \Cref{D:window} implies that $\langle \lambda_0, \chi_2-\theta \rangle \leq \langle \lambda_0, \zeta_{\lambda'} \rangle / 2$ and $\langle \lambda_0, \chi_1 - \theta \rangle \geq - \langle \lambda_0, \zeta_{\lambda'} \rangle / 2$. Subtracting the second inequality from the first gives $\langle \lambda', \chi_2 - \chi_1 \rangle \leq \langle \lambda_0, \zeta_{\lambda'} \rangle$, and equality holds if and only if $\langle \lambda_0, \chi_2 - \theta \rangle = \langle \lambda_0, \zeta_{\lambda'} \rangle/2$ and $\langle \lambda_0, \chi_1-\theta \rangle = - \langle \lambda_0, \zeta_{\lambda'} \rangle / 2$. Because $\theta$ is $\lambda_0$-generic, there is no weight $\chi_2 \in M$ such that $\langle \lambda_0,\chi_2-\theta-\zeta_{\lambda'}/2 \rangle = 0$, so $\langle \lambda', \chi_2 - \chi_1 \rangle < \langle \lambda_0, \zeta_{\lambda'} \rangle$.
\end{proof}

\subsection{Strong exceptional collections}

For an irreducible $G$-representation $U$, the central cocharacter $\lambda_0$ acts with constant weight on $U$, which we will denote $\wt_{\lambda_0}(U)$.

\begin{prop} \label{P:exceptional_collections}
Assume \Cref{H:main_setup}, let $\ell \in M_\bR^W$ be such that $\langle \lambda_0, \ell \rangle < 0$, and let $\theta \in M_\bR^W$ be $\lambda_0$-generic with respect to $\ell$ (\Cref{D:lambda_0_generic}). Let $U_1,\ldots,U_N$ denote the irreducible representations of $G$ whose weights lie in $\theta+\nabla_\ell \subset M_{\bR}$, indexed so that $\wt_{\lambda_0}(U_j) \leq \wt_{\lambda_0}(U_i)$ for all $i<j$. Then the $G$-equivariant locally free sheaves $\cO_{X^{\rm ss}(\ell)} \otimes U_1, \ldots, \cO_{X^{\rm ss}(\ell)} \otimes U_N$ are a strong exceptional collection in $\DCoh(X^{\rm ss} (\ell) /G)$.
\end{prop}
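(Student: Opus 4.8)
The plan is to verify the three defining conditions of a strong exceptional collection directly, reducing everything to the equivariant affine category $\DCoh(X/G)$ via the vanishing of invariant local cohomology established in \Cref{C:fully_faithful_general}. First I would observe that for any two irreducible representations $U_i, U_j$ with weights in $\theta + \nabla_\ell$, \Cref{C:fully_faithful_general} gives a quasi-isomorphism
\[
\RHom_X(\cO_X \otimes U_i, \cO_X \otimes U_j)^G \cong \RHom_{X^{\rm ss}(\ell)}(\cO_{X^{\rm ss}(\ell)} \otimes U_i, \cO_{X^{\rm ss}(\ell)} \otimes U_j)^G,
\]
so it suffices to compute the left-hand side. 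Since $\cO_X \otimes U_i$ and $\cO_X \otimes U_j$ are locally free on the affine scheme $X$, one has $\RHom_X(\cO_X \otimes U_i, \cO_X \otimes U_j) \cong \Gamma(X, \cO_X) \otimes \Hom_k(U_i, U_j)$ concentrated in degree $0$, hence $\RHom_X(\cO_X \otimes U_i, \cO_X \otimes U_j)^G \cong (\Sym(X^\ast) \otimes U_i^\ast \otimes U_j)^G$, again concentrated in degree $0$. This already gives condition (1): $\Ext^p = 0$ for all $p \neq 0$, for every pair $(i,j)$, which is even stronger than the "strong" requirement — there are no higher Ext groups at all between any two objects in the collection.

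Next I would handle conditions (2) and (3), the "exceptional" and "ordering" parts. For $\Hom(\cO_{X^{\rm ss}} \otimes U_i, \cO_{X^{\rm ss}} \otimes U_i)^G = (\Sym(X^\ast) \otimes U_i^\ast \otimes U_i)^G$: since $\lambda_0$ pairs strictly negatively with all weights of $X$, it pairs strictly positively with all weights of $X^\ast$, so $\lambda_0$ acts with strictly positive weight on every positive-degree piece of $\Sym(X^\ast)$; meanwhile $\lambda_0$ acts with weight $0$ on $U_i^\ast \otimes U_i$. Therefore the only $\lambda_0$-invariant (hence the only $G$-invariant) elements of $\Sym(X^\ast) \otimes U_i^\ast \otimes U_i$ live in degree $0$ of $\Sym(X^\ast)$, where we get $(U_i^\ast \otimes U_i)^G = \Hom_G(U_i, U_i) = k$ by Schur's lemma (using that $k$ has characteristic zero and $U_i$ is irreducible). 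This gives condition (2). For the ordering condition, I need $\Hom(\cO_{X^{\rm ss}} \otimes U_i, \cO_{X^{\rm ss}} \otimes U_j)^G = 0$ whenever $i < j$, i.e. whenever $\wt_{\lambda_0}(U_j) \leq \wt_{\lambda_0}(U_i)$ but $U_i \ncong U_j$ — more precisely, for the collection to be exceptional in the given order we need this Hom to vanish when $i \neq j$ and $\wt_{\lambda_0}(U_j) \le \wt_{\lambda_0}(U_i)$. Here I again use the $\lambda_0$-weight argument: an invariant element of $\Sym^d(X^\ast) \otimes U_i^\ast \otimes U_j$ would have total $\lambda_0$-weight equal to (something strictly positive if $d > 0$) $+ \wt_{\lambda_0}(U_j) - \wt_{\lambda_0}(U_i)$, and for this to be zero one needs $\wt_{\lambda_0}(U_i) - \wt_{\lambda_0}(U_j) > 0$ when $d>0$, or $= \wt_{\lambda_0}(U_j) - \wt_{\lambda_0}(U_i)$... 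I should be careful: when $d = 0$ and $U_i \ncong U_j$, Schur gives $0$; when $d > 0$ the element has strictly positive $\lambda_0$-weight contribution from $\Sym^d(X^\ast)$, so invariance forces $\wt_{\lambda_0}(U_i) > \wt_{\lambda_0}(U_j)$, contradicting $i < j$. Thus for $i < j$ the Hom vanishes, establishing that the collection is exceptional in the stated order.

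The one subtlety I would flag and address carefully is the indexing: the proposition orders the $U_i$ by \emph{non-increasing} $\lambda_0$-weight, and with the roots of $B$ taken to be negative (so that "restriction" and "$\RHom$" in the affine category behave as expected), one must double-check the direction of the inequality in the $\lambda_0$-weight bookkeeping — namely that $\lambda_0$ acts with positive weight on $X^\ast$ rather than on $X$, which follows directly from \Cref{H:main_setup} stating $\langle \lambda_0, \beta_i \rangle < 0$ for the weights $\beta_i$ of $X$. I expect no genuine obstacle here: the entire argument is a combination of (a) the local cohomology vanishing of \Cref{C:fully_faithful_general}, which does all the geometric work of passing from $X^{\rm ss}$ to $X$, and (b) an elementary weight computation on the affine side using that $\lambda_0$ is central and anti-ample for $X^\ast$. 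Generation (condition (3)) is \emph{not} claimed by this proposition — it is deferred to \Cref{T:main_fano} and \Cref{T:main_nef_fano} — so the proof ends once (1) and (2) and the ordering are verified.
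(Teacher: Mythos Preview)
Your approach is essentially the same as the paper's: the proof there simply cites \Cref{C:fully_faithful_general} together with \Cref{L:big_collection}, and the latter is exactly the $\lambda_0$-weight computation on $(\Sym(X^\ast)\otimes U_i^\ast\otimes U_j)^G$ that you carry out.

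There is, however, a direction error in your ordering argument. With the paper's convention (stated in the introduction), an exceptional collection requires $\Ext^p(E_i,E_j)=0$ for $i>j$, not for $i<j$. For $i>j$ the indexing gives $\wt_{\lambda_0}(U_i)\leq\wt_{\lambda_0}(U_j)$, so the $\lambda_0$-weight of any element of $\Sym^d(X^\ast)\otimes U_i^\ast\otimes U_j$ equals a nonnegative contribution from $\Sym^d(X^\ast)$ plus $\wt_{\lambda_0}(U_j)-\wt_{\lambda_0}(U_i)\geq 0$; this is strictly positive unless $d=0$ and the $\lambda_0$-weights coincide, in which case Schur gives zero since $U_i\ncong U_j$. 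Your own computation in fact shows that for $d>0$ invariance forces $\wt_{\lambda_0}(U_i)>\wt_{\lambda_0}(U_j)$, which is \emph{consistent} with $i<j$ rather than a contradiction; indeed $\Hom$ from $U_i$ to $U_j$ is typically nonzero for $i<j$, as that is the direction in which morphisms flow in an exceptional collection. Once you swap $i<j$ for $i>j$ in the vanishing claim, the argument is complete and matches the paper.
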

\begin{proof}
This follows immediately from \Cref{C:fully_faithful_general} and \Cref{L:big_collection} below.
\end{proof}

\begin{lem} \label{L:big_collection}
One can choose a total ordering on the set of irreducible representations of $G$ such that $U \prec W$ implies $\wt_{\lambda_0}(W)\leq \wt_{\lambda_0}(U)$. With this ordering, the locally free sheaves $\cO_X \otimes U$, where $U$ ranges over all irreducible representations of $G$, form a full strong exceptional collection in $\DCoh(X/G)$.
\end{lem}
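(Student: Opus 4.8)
The plan is to split the statement into its three parts --- strong exceptionality (all $\Ext^{>0}$ between a pair vanish), the normalization $\Hom(\cO_X\otimes U,\cO_X\otimes U)=k$, and generation (fullness) --- and to reduce all of them to computations with $G$-invariants of $S:=\Sym(X^*)=\Gamma(X,\cO_X)$, the key leverage being that the central cocharacter $\lambda_0$ of \Cref{H:main_setup} acts on $S$ with non-negative weights and $S^{\lambda_0=0}=k$. First I would observe that, because $X$ is affine and $G$ is linearly reductive in characteristic zero, for any representations $V,W$ the complex $\RHom_X(\cO_X\otimes V,\cO_X\otimes W)^G=R\Gamma(X,\cO_X\otimes V^*\otimes W)^G=(S\otimes V^*\otimes W)^G$ is concentrated in cohomological degree $0$; this disposes of ``strong'' for free, leaving the exceptional-collection conditions as statements about $\Hom_G(U,S\otimes W)$.

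The heart of the exceptionality is then the $\lambda_0$-weight bookkeeping. Since $\langle\lambda_0,\beta_i\rangle<0$ for every weight $\beta_i$ of $X$, every nonconstant monomial in $S$ has strictly positive $\lambda_0$-weight, so $S=k\oplus S_{>0}$ with $S_{>0}$ a sum of weight spaces of strictly positive $\lambda_0$-weight. Decomposing $S\otimes W$ into $G$-isotypic pieces, the summands occurring in $k\otimes W=W$ have $\lambda_0$-weight $\wt_{\lambda_0}(W)$ and all other summands have strictly larger $\lambda_0$-weight. Hence $\Hom_G(U,S\otimes W)=0$ unless $\wt_{\lambda_0}(U)>\wt_{\lambda_0}(W)$ or $U\cong W$, and in the latter case it equals $\Hom_G(U,U)=k$ by Schur's lemma. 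This gives the normalization; and if one picks any total order $\prec$ on the countable set of irreducible representations refining the preorder $U\preceq W\iff\wt_{\lambda_0}(W)\le\wt_{\lambda_0}(U)$, then $W\prec U$ forces $\wt_{\lambda_0}(U)\le\wt_{\lambda_0}(W)$ and $U\not\cong W$, whence $\RHom_X(\cO_X\otimes U,\cO_X\otimes W)^G=0$ by the previous sentence --- exactly the vanishing required of an exceptional collection in this order.

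For generation, I would argue that the $\cO_X\otimes U$ generate $\DCoh(X/G)$ as a thick subcategory. Since every object is built by cones from shifts of its cohomology sheaves, it suffices to handle a single coherent sheaf, that is, a finitely generated $G$-equivariant $S$-module $M$. Repeatedly choosing finite-dimensional equivariant generating subrepresentations produces an equivariant resolution of $M$ by modules $\cO_X\otimes V_i$; because $S=\Sym(X^*)$ is regular of dimension $\dim X$, truncating this resolution at step $d:=\dim X$ yields a bounded exact complex $0\to\cE\to\cO_X\otimes V_{d-1}\to\cdots\to\cO_X\otimes V_0\to M\to0$ in which $\cE$ (the $d$-th syzygy) is a $G$-equivariant vector bundle on $X$. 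It then remains to show that every $G$-equivariant vector bundle $\cE$ lies in the thick subcategory generated by the $\cO_X\otimes U$: choosing an equivariant surjection $\cO_X\otimes V\twoheadrightarrow\cE$ as before, its kernel is again a vector bundle, and the obstruction to an equivariant splitting lies in $\Ext^1_{X/G}(\cE,\ker)=H^1(X,\cE^\vee\otimes\ker)^G$, which vanishes since $X$ is affine; thus $\cE$ is a direct summand of $\cO_X\otimes V$, and decomposing $V$ into irreducibles concludes the argument.

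I expect the generation step to be the only point requiring real care: it combines finiteness of the global dimension of $S$ (to truncate to a bounded complex terminating in a vector bundle) with the affineness of $X$ (to exhibit every equivariant vector bundle as a direct summand of some $\cO_X\otimes V$). The remaining steps are routine manipulations with $G$-invariants, and the one genuinely indispensable input throughout is \Cref{H:main_setup}, entering via the identity $S^{\lambda_0=0}=k$.
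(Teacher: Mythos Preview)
Your argument is correct and, for the exceptionality and endomorphism computations, identical to the paper's: both reduce to $\RHom_{X/G}(\cO_X\otimes U,\cO_X\otimes W)\cong(\Sym(X^*)\otimes U^*\otimes W)^G$ and then use that $\lambda_0$ gives $\Sym(X^*)$ a non-negative grading with degree-zero piece $k$. The only difference is that the paper omits any argument for fullness, treating it as evident; your direct-summand argument is correct but a bit roundabout, since the same $\lambda_0$-grading already makes $S=\Sym(X^*)$ a connected non-negatively graded polynomial ring, so graded Nakayama together with $\mathrm{gl.dim}\,S=\dim X$ yields a finite \emph{equivariant free} resolution of any finitely generated $G$-equivariant $S$-module, giving triangulated (not merely thick) generation without the extra splitting step.
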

\begin{proof}
The fact that such a total ordering exists is elementary -- one can use a lexicographic ordering first by $-\wt_{\lambda_0}(-)$, and then break ties using any other total ordering of irreducible representations. The fact that this forms a full exceptional collection follows from the computation for two irreducible representations $U$ and $W$
\begin{equation} \label{E:hom_spaces}
\RHom_{X/G}(\cO_X \otimes U, \cO_X \otimes W) \cong (\Sym(X^\ast) \otimes W \otimes U^\ast)^G.
\end{equation}
If $\wt_{\lambda_0}(W)>\wt_{\lambda_0}(U)$ then \eqref{E:hom_spaces} vanishes, because the $\lambda_0$ weights of $W \otimes U^\ast$ are strictly positive and the $\lambda_0$-weights of $\Sym(X^\ast)$ are non-negative by hypothesis. If $\wt_{\lambda_0}(U) = \wt_{\lambda_0}(W)$ then \eqref{E:hom_spaces} is isomorphic to $(W \otimes U^\ast)^G \cong \Hom_{\Rep(G)}(U,W)$, which vanishes if $U \ncong W$ and is isomorphic to $k$ if $U \cong W$.
\end{proof}

\section{Decorated quiver varieties and iterated fiber bundles}

Let $Z$ be a smooth and proper Deligne-Mumford stack with an action of a group $G$. Then for any Deligne-Mumford stack $Y$ with a principal $G$-bundle $P \to Y$, one can consider the associated fiber bundle $P \times^G Z \to Y$, whose fiber is $Z$.\footnote{We take the definition of a $G$-action on $Z$ to be the data of a morphism of algebraic stacks $\cZ \to BG$ along with an isomorphism $\pt \times_{BG} \cZ \cong Z$. Then by definition $P \times^G Z := Y \times_{BG} \cZ$.} We will assume that we have an exceptional collection $E_1,\ldots,E_n \in \DCoh(Z/G)$ that restricts to a full exceptional collection in $\DCoh(Z)$. When $G$ is a connected reductive group, $\pi_1(G)$ is free, and $Z$ is a projective scheme, any full exceptional collection in $\DCoh(Z)$ is the restriction of an exceptional collection in $\DCoh(Z/G)$ \cite{P11}*{Lemma 2.2}.

\begin{lem} \label{L:fec_bundle}
For any Deligne-Mumford stack $Y$ and any $G$-bundle $P \to Y$, the associated fiber bundle $P \times^G Z$ has a semiorthogonal decomposition
\[
\DCoh(P \times^G Z) = \langle \DCoh(Y) \boxtimes^G E_1,\ldots, \DCoh(Y) \boxtimes^G E_n \rangle,
\]
where $\DCoh(Y) \boxtimes^G E_i$ denotes the essential image of the fully faithful functor $p_1^\ast(-) \otimes p_2^\ast(E_i) : \DCoh(Y) \mapsto \DCoh(P \times^G Z)$, and $p_1 : P \times^G Z \to Y$ and $p_2 : P \times^G Z \to Z/G$ denote the canonical projections.
\end{lem}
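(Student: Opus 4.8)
The plan is to establish separately the three properties behind the claimed semiorthogonal decomposition: that each functor $\Phi_i := p_1^\ast(-)\otimes p_2^\ast(E_i)$ is fully faithful, that their essential images are semiorthogonal in the indicated order, and that together they generate $\DCoh(P\times^G Z)$. The first two are formal consequences of the projection formula and flat base change; generation is the substantive point, and I would obtain it by reducing to the case of a trivial $G$-bundle. By the definition recalled in the footnote, $P\times^G Z = Y\times_{BG}(Z/G)$, so there is a Cartesian square with projections $p_1 : P\times^G Z\to Y$, $p_2 : P\times^G Z\to Z/G$, the classifying map $f : Y\to BG$ of $P$, and the structure map $\pi : Z/G\to BG$; here $\pi$ is proper (and representable by Deligne-Mumford stacks), and $f$ is flat, since it pulls back to $P\to\Spec(k)$ along the fppf cover $\Spec(k)\to BG$.

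Since $Z/G$ is a smooth Deligne-Mumford stack, each $E_i$ is perfect, so for $A,B\in\DCoh(Y)$ the projection formula together with flat base change along the square above (which one may check after pulling back to $\Spec(k)$) give a natural isomorphism
\[
\RHom_{P\times^G Z}(\Phi_i A,\Phi_j B)\;\cong\;\RHom_Y\bigl(A,\,B\otimes f^\ast\cE_{ij}\bigr),\qquad \cE_{ij}:=R\pi_\ast\bigl(E_i^\vee\otimes E_j\bigr)\in\DCoh(BG)=D^b(\Rep(G)).
\]
The image of $\cE_{ij}$ under the forgetful functor $D^b(\Rep(G))\to D^b(k)$ is $\RHom_Z(E_i|_Z,E_j|_Z)$. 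For $i>j$ this vanishes, since $(E_i|_Z)$ is an exceptional collection on $Z$; hence $\cE_{ij}=0$ and the images of $\Phi_i$ and $\Phi_j$ are semiorthogonal. For $i=j$ the complex $\cE_{ii}$ is a one-dimensional $G$-representation concentrated in degree $0$ whose invariants $\RHom_{Z/G}(E_i,E_i)=k$ are nonzero because $E_i$ is exceptional in $\DCoh(Z/G)$; so $\cE_{ii}$ is the trivial representation, $f^\ast\cE_{ii}=\cO_Y$, and $\Phi_i$ is fully faithful. Each $\Phi_i$ is moreover admissible, with right adjoint $\Psi_i:=Rp_{1\ast}(p_2^\ast E_i^\vee\otimes-)$ and a left adjoint supplied by Grothendieck duality (as $p_1$ is proper and smooth).

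For generation it suffices to show that the common right orthogonal of the admissible subcategories $\cA_i:=\Phi_i(\DCoh Y)$ is zero. If $F$ lies in every $\cA_i^\perp$, then by adjunction $\RHom_Y(A,\Psi_i F)=0$ for all $A$ and all $i$, and setting $A=\Psi_i F$ forces $\Psi_i F = Rp_{1\ast}(p_2^\ast E_i^\vee\otimes F)=0$ for all $i$. This hypothesis and the conclusion $F=0$ are both \'etale-local on $Y$ — $p_1$ is proper, so $Rp_{1\ast}$ commutes with \'etale base change on $Y$, and $p_2^\ast E_i$ is pulled back from the fixed object $E_i$ on $Z/G$ — so we may assume $P=Y\times G$, whence $P\times^G Z = Y\times Z$ with $p_1=q_Y$ and $p_2^\ast E_i = q_Z^\ast\bar E_i$ for $\bar E_i:=E_i|_Z$. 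Since $(\bar E_1,\dots,\bar E_n)$ is a \emph{full} exceptional collection on the smooth proper Deligne-Mumford stack $Z$, the diagonal $\cO_{\Delta_Z}\in\DCoh(Z\times Z)$ lies in the thick subcategory generated by objects $\bar E_i^\vee\boxtimes H_i$ for suitable $H_i\in\DCoh(Z)$ — a resolution of the diagonal attached to the exceptional collection, with the (reordered) dual collection placed on the first factor. Writing $F\cong \mathrm{pr}_{13\ast}\bigl(\mathrm{pr}_{12}^\ast F\otimes\mathrm{pr}_{23}^\ast\cO_{\Delta_Z}\bigr)$ on $Y\times Z$ and substituting this resolution, the projection formula and base change along $Y\times Z\times Z=(Y\times Z)\times_Y(Y\times Z)$ present $F$ as an iterated cone of the objects $q_Y^\ast\bigl(Rq_{Y\ast}(F\otimes q_Z^\ast\bar E_i^\vee)\bigr)\otimes q_Z^\ast H_i$, each of which vanishes by the hypothesis $Rq_{Y\ast}(F\otimes q_Z^\ast\bar E_i^\vee)=0$; hence $F=0$. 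Together with the previous paragraph this yields the semiorthogonal decomposition.

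The main obstacle is this last step. The formal parts (fully faithfulness, semiorthogonality, admissibility, the \'etale-local reduction) are routine applications of base change and the projection formula. For generation, one must produce the resolution of the diagonal from the full exceptional collection — which is available for a smooth proper Deligne-Mumford stack in characteristic zero because $\DCoh(Z)$ is then a smooth and proper $dg$-category whose diagonal bimodule decomposes via the dual collection — and keep the bookkeeping (which dual collection, which factor, which shifts) arranged so that precisely the vanishing term $Rq_{Y\ast}(F\otimes q_Z^\ast\bar E_i^\vee)$ appears. One should also confirm that the base change and projection formulas invoked for the proper morphism $\pi$ hold in the required stacky generality; pulling everything back along $\Spec(k)\to BG$ reduces these statements to the scheme case.
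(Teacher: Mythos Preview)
Your argument is correct, but the paper takes a shorter route in two places. First, rather than establishing full faithfulness and semiorthogonality globally and then reducing only the generation step \'etale-locally, the paper invokes the conservative descent theorem of \cite{BS20}*{Thm.~B} at the outset to reduce the entire statement to the case $Y=\Spec(R)$ smooth affine with trivial $G$-bundle, so that $P\times^G Z \cong \Spec(R)\times Z$; all three properties are then checked in this local model. Second, and more substantively, the paper's generation argument bypasses your resolution-of-the-diagonal construction entirely: once $Y=\Spec(R)$, the projection $q_Z:\Spec(R)\times Z\to Z$ is \emph{affine}, hence $q_{Z\ast}$ is conservative on $\Dqc$. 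Since $R\otimes_k E_i \cong q_Z^\ast(E_i|_Z)$, adjunction gives $\RHom(R\otimes E_i, F) \cong \RHom_Z(E_i|_Z, q_{Z\ast}F)$; if this vanishes for all $i$ then $q_{Z\ast}F=0$ because the $E_i|_Z$ generate $\DCoh(Z)$, and conservativity forces $F=0$. This avoids precisely the step you flag as the main obstacle---producing and verifying a Beilinson-type resolution of $\cO_{\Delta_Z}$ on a Deligne-Mumford stack---and does not require knowing that $\DCoh(Z)$ is smooth and proper as a $dg$-category. Your approach has the merit of keeping the global structure visible (the computation of $\cE_{ij}\in\DCoh(BG)$ is pleasant), but the paper's is more economical.
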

\begin{proof}
By the conservative descent theorem of \cite{BS20}*{Thm.~B}, it suffices to prove the claim after base change along a faithfully flat morphism $Y' \to Y$. We may therefore assume that $Y = \Spec(R)$ is a smooth affine scheme and the $G$-bundle $P$ is trivial, so $P \times^G Z \cong \Spec(R) \times Z$. The projection formula implies that $\RHom_{\Spec(R) \times Z}(R \otimes_k E_i,R \otimes_k E_j) \cong R \otimes_k \RHom_Z(E_i,E_j)$, which vanishes if $j>i$ and is isomorphic to $R$ if $i=j$. The fact that $R$ split-generates $\DCoh(R)$ thus implies that the functors $(-) \otimes_k E_i$ are fully faithful and that their essential images are semiorthogonal. To complete the proof, it suffices to show that $R \otimes E_1,\ldots, R \otimes E_n$ split generate $\DCoh(\Spec(R) \times Z)$. This follows from the fact that $\Spec(R) \times Z \to Z$ is an affine morphism, because $F \in \Dqc(\Spec(R) \times Z)$ is zero if and only if its pushforward to $Z$ is zero.
\end{proof}

As a consequence of \Cref{L:fec_bundle}, if $Y$ has a full exceptional collection $\DCoh(Y) = \langle F_1, \ldots, F_m \rangle$, then the objects $F_i \boxtimes^G E_j$, with indices $(i,j)$ given the reverse lexicographic ordering, form a full exceptional collection in $\DCoh(P \times^G Y)$. If $\{E_i\}$ and $\{F_j\}$ are strongly exceptional, or consist of locally free sheaves, then the same holds for $E_i \boxtimes^G F_j$.

Our next goal is to illustrate a large class of examples in which one can apply \Cref{L:fec_bundle} to construct full exceptional collections.

\begin{defn}
A \emph{quiver} consists of a finite set $Q_0$ of ``vertices" and a finite set $Q_1$ of ``arrows," along with source and target maps $s,t:Q_1 \to Q_0$. A \emph{decorated quiver} consists of a quiver $Q$ along with a triple $(G_i,V_i,w_i)$ for all $i \in Q_0$, where $G_i$ is a reductive group, $V_i$ is a $G_i$-representation, and $w_i \in \bZ$ with $w_i \geq 0$. The \emph{representation space} of a decorated quiver is the vector space
\[
\Rep(Q) := \prod_{\alpha \in Q_1} \Hom(V_{s(\alpha)},V_{t(\alpha)}) \times \prod_{i \in Q_0} V_i^{w_i},
\]
which is naturally a representation of the group $G_Q := \prod_{i \in Q_0} G_i$. We will sometimes denote a decorated quiver $(Q_1,Q_0,\{(G_i,V_i,w_i\})$ simply by $Q$, when the context is clear.
\end{defn}

Let $Q = (Q_1,Q_0,\{(G_i,V_i,w_i\})$ be a decorated quiver. We assume the underlying quiver has no oriented cycles, so we may choose an identification $Q_0 \cong \{1,\ldots,N\}$ such that arrows only point from $i$ to $j$ for $j>i$. For each $i \in Q_0$, let
\begin{gather*}
\Rep(Q)_i := \Hom(k^{w_i} \oplus \bigoplus_{\substack{\alpha \in Q_1 \\ t(\alpha) = i}} V_{s(\alpha)}, V_{i}), \\
n_i := w_i + \sum_{\substack{\alpha \in Q_1 \\ t(\alpha) = i}} \dim(V_{s(\alpha)}).
\end{gather*}
Note that $\Rep(Q)_p$ is equipped with a natural action of $G_{\leq p} := \prod_{1 \leq i \leq p} G_i$, and $\Rep(Q)_i \cong V_i^{n_i}$ as a $G_i$-representation. The natural forgetful maps $\Rep(Q) \to \Rep(Q)_p$ induce an isomorphism $\Rep(Q) \cong \prod_p \Rep(Q)_p$.

\begin{prop} \label{P:iterated_fibration}
Let $Q$ be a decorated quiver with no oriented cycles, and vertices ordered as above, and assume that each $V_i$ has strictly negative weights with respect to some central cocharacter of $G_i$. For all $i=1,\ldots,N$, let $\ell_i$ be a character of $G_i$ such that $(V_i^{n_i})^{G_i\rm{-ss}}(\ell_i)$ has finite stabilizers in $G_i$, and let $\ell = a_1 \ell_1 + \cdots + a_N \ell_N$ for real numbers $a_i$ satisfying $0<a_N \ll a_{N-1} \ll \cdots \ll a_1$.

Then $\rho \in \Rep(Q)$ is $\ell$-semistable with respect to the $G_Q$-action if and only if for every $i \in Q_0$ the image of $\rho$ in $\Rep(Q)_i$ is $\ell_i$-semistable for the action of $G_i$. Points in $\Rep(Q)^{\rm ss}(\ell)$ have finite stabilizer groups in $G_Q$. If $Q_{\leq p}$ denotes the full decorated sub-quiver containing vertices $\{1,\ldots,p\}$, then the forgetful map
\[
\Rep(Q_{\leq p})^{G_{\leq p}\rm{-ss}} / G_{\leq p} \to \Rep(Q_{\leq p-1})^{G_{\leq p-1}\rm{-ss}} / G_{\leq p-1},
\]
where semistability is always defined with respect to the restriction of $\ell$ to the relevant subgroup, is the fiber bundle associated to the principal $G_{\leq p-1}$-bundle
\[
\Rep(Q_{\leq p-1})^{G_{\leq p-1}\rm{-ss}} \to \Rep(Q_{\leq p-1})^{G_{\leq p-1}\rm{-ss}}(\ell_{\leq p-1}) / G_{\leq p-1},
\]
whose fiber is the Deligne-Mumford stack $\Rep(Q)_p^{G_{p}\rm{-ss}} / G_p$ with $G_{\leq p-1}$-action coming from the natural $G_{\leq p}$ action on $\Rep(Q)_p$.
\end{prop}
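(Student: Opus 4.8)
The plan is to induct on the number $N$ of vertices. The case $N=1$ is immediate: there $\Rep(Q)=\Rep(Q)_1=V_1^{w_1}$, $G_Q=G_1$, and $\ell=a_1\ell_1$ defines the same semistable locus as $\ell_1$. For the inductive step I would decompose $\Rep(Q)=\Rep(Q_{\le N-1})\times\Rep(Q)_N$ and $G_Q=G_{\le N-1}\times G_N$, where $G_N$ acts on $\Rep(Q)_N=\Hom(E,V_N)$ (with $E:=k^{w_N}\oplus\bigoplus_{t(\alpha)=N}V_{s(\alpha)}$) only through $V_N$, while $G_{\le N-1}$ acts on all of $\Rep(Q)$, acting on $\Rep(Q)_N$ only through $E$. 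The entire statement then reduces to the following equivalence (semistability always with respect to the appropriate restriction of $\ell$): writing $\ell_{\le N-1}:=a_1\ell_1+\cdots+a_{N-1}\ell_{N-1}$, a point $(\rho',\phi)$ is $\ell$-semistable for $G_Q$ if and only if $\rho'$ is $\ell_{\le N-1}$-semistable for $G_{\le N-1}$ and $\phi$ (viewed as a point of $V_N^{n_N}$) is $\ell_N$-semistable for $G_N$. Granting this: assertion (1) follows on combining it with the inductive hypothesis applied to $Q_{\le N-1}$, using that for $i\le N-1$ the space $\Rep(Q)_i$ and the image of $\rho'$ in it are unchanged on passing from $Q$ to $Q_{\le N-1}$; assertion (2) follows because $\Stab_{G_Q}(\rho',\phi)$ has kernel $\Stab_{G_N}(\phi)$ over its image in $\Stab_{G_{\le N-1}}(\rho')$, both of which are finite (the first by the hypothesis on $(V_N^{n_N})^{\rm ss}(\ell_N)$, the second by the inductive hypothesis), so $\Stab_{G_Q}(\rho',\phi)$ is finite; and assertion (3) follows because the equivalence gives $\Rep(Q_{\le p})^{\rm ss}=\Rep(Q_{\le p-1})^{\rm ss}\times\Rep(Q)_p^{G_p\text{-ss}}(\ell_p)$ as a $G_{\le p-1}\times G_p$-space on which $G_p$ acts only on the second factor and $G_{\le p-1}$ acts diagonally, so that quotienting first by $G_p$ and then by $G_{\le p-1}$ presents $\Rep(Q_{\le p})^{\rm ss}/G_{\le p}$ as $\Rep(Q_{\le p-1})^{\rm ss}\times^{G_{\le p-1}}\bigl(\Rep(Q)_p^{G_p\text{-ss}}(\ell_p)/G_p\bigr)$, which is exactly the asserted fiber bundle over $\Rep(Q_{\le p-1})^{\rm ss}/G_{\le p-1}$, with the forgetful map as its projection.

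To prove the equivalence I would work with the Hilbert--Mumford criterion, after first noting that the finite-stabilizer hypotheses force $\langle\lambda_0^{(i)},\ell_i\rangle<0$ for each $i$ (a semi-invariant section pairing $\ge 0$ with $\lambda_0^{(i)}$ would be a non-constant function not vanishing at the origin). For the direction ``$\rho'$ or $\phi$ unstable $\Rightarrow(\rho',\phi)$ unstable'': given a cocharacter $\mu$ of $G_{\le N-1}$ destabilizing $\rho'$, I would replace it by $P\mu-\lambda_0^{(\le N-1)}$ for $P\gg 0$ --- where $\lambda_0^{(\le N-1)}$ is a central cocharacter having strictly negative weights on all of $\Rep(Q_{\le N-1})$, obtained by rescaling the $\lambda_0^{(i)}$ into a sufficiently steep staircase --- so that $\lim_{t\to0}\mu(t)\rho'=0$, and then form $(\mu,-M\lambda_0^{(N)})$ with $M\gg 0$; its limit on $(\rho',\phi)$ is $0$ and $\langle(\mu,-M\lambda_0^{(N)}),\ell\rangle<0$, because $a_N$ is negligible beside the coefficients of $\ell_{\le N-1}$ and $\langle\lambda_0^{(N)},\ell_N\rangle<0$. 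The case of $\phi$ unstable is symmetric, pairing a destabilizing cocharacter for $\phi$ with $-\lambda_0^{(\le N-1)}$. For the converse, suppose $(\rho',\phi)$ is $\ell$-unstable; by the finiteness built into Hilbert--Mumford I may take the destabilizing cocharacter to have bounded size, of the form $(\mu,\nu)$ with $\lim_{t\to0}(\mu,\nu)(t)(\rho',\phi)$ existing and $\langle\mu,\ell_{\le N-1}\rangle+a_N\langle\nu,\ell_N\rangle<0$. If $\langle\mu,\ell_{\le N-1}\rangle<0$, then $\mu$ alone destabilizes $\rho'$. If $\langle\mu,\ell_{\le N-1}\rangle=0$ and $\rho'$ is $\ell_{\le N-1}$-semistable, then $\rho'_0:=\lim_{t\to0}\mu(t)\rho'$ is again $\ell_{\le N-1}$-semistable --- the semi-invariant section witnessing semistability of $\rho'$ is constant along the $\mu$-orbit, since the associated line bundle has $\mu$-weight $d\langle\mu,\ell_{\le N-1}\rangle=0$ --- and $\rho'_0$ is fixed by $\mu$, so $\mu(\bG_m)\subseteq\Stab_{G_{\le N-1}}(\rho'_0)$, which is finite by the inductive hypothesis; hence $\mu$ is trivial and $\nu$ alone destabilizes $\phi$ for $G_N$. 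If $\langle\mu,\ell_{\le N-1}\rangle>0$, then $\langle\nu,\ell_N\rangle<-\langle\mu,\ell_{\le N-1}\rangle/a_N$ is very negative; the existence of the limit says $\phi(E_a)\subseteq V_N^{\nu\ge a}$ for all $a$, so with $a_{\mathrm{min}}:=\min\{a:\phi|_{E_a}\ne0\}$ the cocharacter $\nu':=\nu+a_{\mathrm{min}}\lambda_0^{(N)}$ of $G_N$ has $\operatorname{im}\phi\subseteq V_N^{\nu'\ge0}$ (because $\lambda_0^{(N)}$ has weights $\le -1$ on $V_N$) while still $\langle\nu',\ell_N\rangle<0$ (the correction is bounded), so $\nu'$ destabilizes $\phi$. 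In every case $\rho'$ is $\ell_{\le N-1}$-unstable or $\phi$ is $\ell_N$-unstable, as needed.

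The step I expect to be the main obstacle is the middle case of the converse --- ruling out a nontrivial $G_{\le N-1}$-component of the destabilizing cocharacter when $\rho'$ is semistable. This is exactly where the finite-stabilizer hypotheses on the elemental pieces $(V_i^{n_i})^{\rm ss}(\ell_i)$, propagated through the induction as finiteness of stabilizers on $\Rep(Q_{\le N-1})^{\rm ss}(\ell_{\le N-1})$, are indispensable: without them the limit $\rho'_0$ could carry a one-dimensional stabilizer, and both the equivalence and the fiber-bundle description would fail. A secondary, routine but delicate, point is to calibrate how steep the staircase $0<a_N\ll\cdots\ll a_1$ must be --- and how large the auxiliary multiples of the $\lambda_0^{(i)}$ must be --- so that all the Hilbert--Mumford sign comparisons come out correctly; here one uses only the standard finiteness of Hilbert--Mumford data (finitely many subspaces $X^{\lambda\ge0}$ occur, hence finitely many relevant cocharacters).
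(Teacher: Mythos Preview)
Your reduction to the key equivalence
\[
\Rep(Q_{\le p})^{G_{\le p}\text{-ss}} \;=\; \Rep(Q_{\le p-1})^{G_{\le p-1}\text{-ss}} \times \Rep(Q)_p^{G_p\text{-ss}}
\]
is exactly what the paper does, and deducing the three assertions from it is fine. The difference lies in how you prove the equivalence.

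For the ``easy'' inclusion (either factor unstable $\Rightarrow$ pair unstable), you build explicit destabilizing cocharacters using the central $\lambda_0^{(i)}$'s; the paper instead uses that $\Rep(Q)_p^{G_p\text{-ss}}/G_p$ is a \emph{proper} Deligne--Mumford stack: given $\lambda_{\le p-1}$ destabilizing $\rho_{\le p-1}$, the valuative criterion supplies a $\lambda_p$ so that $(\lambda_{\le p-1},\lambda_p)$ has a limit on $(\rho_{\le p-1},\rho_p)$, and then one chooses $a_p$ small. Both arguments are correct; yours is more hands-on and requires keeping track of several constants.

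For the ``hard'' inclusion (pair unstable $\Rightarrow$ some factor unstable), the paper does \emph{not} argue via Hilbert--Mumford at all. Having shown the open inclusion above, it notes that the right-hand side modulo $G_{\le p}$ is a proper Deligne--Mumford stack (by induction), while general GIT makes the coarse space of the left-hand substack proper over $\Spec(k)$; hence the open immersion is an equality. This one-line properness argument replaces your three-case analysis entirely.

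Your direct analysis does have a gap in Case~3. With $\nu'=\nu+a_{\min}\lambda_0^{(N)}$, the containment $\operatorname{im}\phi\subset V_N^{\nu'\ge 0}$ fails when $a_{\min}>0$: a vector of $\nu$-weight $b=a_{\min}$ and $\lambda_0^{(N)}$-weight $c\le -2$ has $\nu'$-weight $a_{\min}(1+c)<0$. The fix is easy --- when $a_{\min}\ge 0$ one should simply take $\nu'=\nu$, since already $\operatorname{im}\phi\subset V_N^{\nu\ge a_{\min}}\subset V_N^{\nu\ge 0}$ --- and with this correction, together with a cleaner justification that the destabilizing $(\mu,\nu)$ may be drawn from a fixed finite set of cone generators (independent of $a_N$), your argument goes through. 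But the paper's properness argument is both shorter and more robust, and is worth knowing.
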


\begin{proof}
The claims of this proposition are formal consequences of the claim that the canonical isomorphism $\Rep(Q_{\leq p}) \cong \Rep(Q_{\leq p-1}) \times \Rep(G)_p$ induces an isomorphism
\[
\Rep(Q_{\leq p})^{G_{\leq p}\rm{-ss}} \cong \Rep(Q_{\leq p-1})^{G_{\leq p-1}\rm{-ss}} \times \Rep(G)_p^{G_p\rm{-ss}}.
\]
To show this, represent $\rho \in \Rep(Q_{\leq p})$ as a pair $(\rho_{\leq p-1},\rho_p)$. If $\rho_p$ is destabilized by a cocharacter $\lambda_p$ of $G_p$, then the cocharacter $(0,\lambda_p)$ destabilizes $\rho$. On the other hand, if $\rho_p$ is semistable, and $\lambda_{\leq p-1}$ is a cocharacter of $G_{\leq p-1}$ that destabilizes $\rho_{\leq p-1}$, then the fact that $\Rep(Q)_p^{G_p\rm{-ss}}/G_p$ is a proper Deligne-Mumford stack implies that there is some cocharacter $\lambda_p$ of $G_p$ such that $(\rho_{\leq p-1},\rho_p)$ has a limit under $(\lambda_{\leq p-1},\lambda_p)$. Assuming $a_1,\ldots,a_{p-1}$ fixed, if we choose $a_p>0$ sufficiently small, then
\[
\langle \lambda_{\leq p-1} , a_1 \ell_1+\cdots+a_{p-1}\ell_{p-1}\rangle + a_p \langle \lambda_p, \ell_p \rangle <0,
\]
hence $(\lambda_{\leq p-1},\lambda_p)$ destabilizes $\rho$ with respect to $a_1 \ell_1 + \cdots + a_p \ell_p$. Because finitely many cocharacters of $G_{\leq p-1}$ suffice to destabilize every unstable point of $\Rep(Q_{\leq p-1})$, this shows that we can choose $a_p >0$ sufficiently small so that if $\rho_{\leq p-1}$ is unstable and $\rho_p$ is semistable, then $\rho$ is unstable. Taken together, this shows that $\rho$ is unstable if either $\rho_p$ or $\rho_{\leq p-1}$ is unstable, and hence
\[
\Rep(Q_{\leq p})^{G_{\leq p}\rm{-ss}} \subset \Rep(Q_{\leq p-1})^{G_{\leq p-1}\rm{-ss}} \times \Rep(Q)_p^{G_p \rm{-ss}}.
\]
Our hypotheses, and the inductive hypothesis, guarantees that the quotient of the right-hand side by $G_{\leq p}$ is a proper Deligne-Mumford stack. But it is a general fact in GIT that the coarse moduli space of the open substack $\Rep(Q_{\leq p})^{\rm ss}/G_{\leq p}$ is proper over $\Spec(\cO_{\Rep(Q_{\leq p})}^{G_{\leq p}}) = \Spec(k)$. It follows that the open immersion above must be equality.
\end{proof}

We now combine the previous results to show how, given as basic building blocks some examples of linear GIT quotients that admit full strong exceptional collections, one can construct many more examples.

\begin{prop} \label{P:decorated_quiver_fec}
Under the hypotheses of \Cref{P:iterated_fibration}, assume that for each $i=1,\ldots,N$ we are given a collection of irreducible $G_i$-representations $U_{i,j}$ for $j=1,\ldots,m_i$ such that $\cO_{V_i^{n_i}} \otimes_k U_{i,1},\ldots,\cO_{V_i^{n_i}} \otimes U_{i,m_i}$ restricts to a full strong exceptional collection in $\DCoh((V_i^{n_i})^{\rm ss}(\ell_i)/G_i)$. Then the locally free sheaves 
\[
\cO_{\Rep(Q)} \otimes (U_{1,j_1} \boxtimes \cdots \boxtimes U_{N,j_N})
\]
associated to the $G_Q$-representations $U_{1,j_1} \boxtimes \cdots \boxtimes U_{N,j_N}$ restrict to a full strong exceptional collection in $\DCoh(\Rep(Q)^{G_Q-\rm{ss}}(\ell)/G_Q)$, using the reverse lexicographic ordering on the indexing tuples $(j_1,\ldots,j_N)$.
\end{prop}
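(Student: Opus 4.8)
The plan is to induct on the number $N$ of vertices, using \Cref{P:iterated_fibration} to realize $\Rep(Q_{\leq p})^{G_{\leq p}\rm{-ss}}(\ell)/G_{\leq p}$ as a fiber bundle over $\Rep(Q_{\leq p-1})^{G_{\leq p-1}\rm{-ss}}(\ell)/G_{\leq p-1}$ and then invoking \Cref{L:fec_bundle}. The base case $p=1$ is exactly the hypothesis, since $\Rep(Q)_1 \cong V_1^{n_1}$ as a $G_1$-representation and vertex $1$ receives no arrows. For the inductive step I would write $Y := \Rep(Q_{\leq p-1})^{G_{\leq p-1}\rm{-ss}}/G_{\leq p-1}$, $G := G_{\leq p-1}$, and $P := \Rep(Q_{\leq p-1})^{G_{\leq p-1}\rm{-ss}}$, which is a principal $G$-bundle over $Y$. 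By \Cref{P:iterated_fibration}, $\Rep(Q_{\leq p})^{G_{\leq p}\rm{-ss}}/G_{\leq p}$ is the associated bundle $P \times^G Z_p$, where $Z_p := \Rep(Q)_p^{G_p\rm{-ss}}(\ell_p)/G_p \cong (V_p^{n_p})^{\rm ss}(\ell_p)/G_p$ carries its natural $G$-action. Since $V_p^{n_p}$ has strictly negative weights under a central cocharacter of $G_p$, one has $\cO_{\Rep(Q)_p}^{G_p}=k$, so $Z_p$ is proper, and it is a smooth Deligne--Mumford stack because the $\ell_p$-semistable points have finite stabilizers in $G_p$.

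To apply \Cref{L:fec_bundle} I need an exceptional collection in $\DCoh(Z_p/G)$ that restricts to a full exceptional collection in $\DCoh(Z_p)$, and this is the only step I expect to require care. The key point is that the prescribed collection on the fiber is canonically $G$-equivariant: the space $\Rep(Q)_p = \Hom\bigl(k^{w_p} \oplus \bigoplus_{t(\alpha)=p} V_{s(\alpha)},\, V_p\bigr)$ is naturally a $G_{\leq p}$-representation on which $G=G_{\leq p-1}$ acts only through the source spaces, so for each irreducible $G_p$-representation $U_{p,j}$ the locally free sheaf $\cO_{\Rep(Q)_p} \otimes U_{p,j}$ becomes $G_{\leq p}$-equivariant by letting $G$ act trivially on $U_{p,j}$. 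This descends to a locally free sheaf $E_j$ on $\Rep(Q)_p^{G_p\rm{-ss}}/G_{\leq p} = Z_p/G$ whose restriction to $Z_p$ is the hypothesized full strong exceptional collection $\cO_{Z_p} \otimes U_{p,j}$; moreover $\{E_j\}$ is itself exceptional (and strong) in $\DCoh(Z_p/G)$, since in characteristic zero $\RHom_{Z_p/G}(E_i,E_j) = \RHom_{Z_p}(E_i|_{Z_p},E_j|_{Z_p})^G$ and taking $G$-invariants of the degree-$0$ $\Ext$-groups on $Z_p$ preserves exceptionality.

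Now \Cref{L:fec_bundle} and the remark following it, together with the full strong exceptional collection $\{F_i\}$ on $Y$ supplied by the inductive hypothesis, produce a full exceptional collection $\{F_i \boxtimes^G E_j\}$ in $\DCoh(P \times^G Z_p)$; it consists of locally free sheaves, and it is strong because, after an étale base change trivializing $P$, the relevant $\Ext$-groups are a Künneth product of $\RHom$'s on $Y$ and on $Z_p$, each concentrated in degree $0$. It then remains to identify these sheaves and to track the ordering. The two projections $p_1 \colon \Rep(Q_{\leq p})^{\rm ss}/G_{\leq p} \to Y$ and $p_2 \colon \Rep(Q_{\leq p})^{\rm ss}/G_{\leq p} \to Z_p/G$ are precisely the maps induced by the factor projections of $\Rep(Q_{\leq p}) \cong \Rep(Q_{\leq p-1}) \times \Rep(Q)_p$, using that $\Rep(Q_{\leq p})^{\rm ss}(\ell)$ maps into $\Rep(Q)_p^{G_p\rm{-ss}}(\ell_p)$ (again \Cref{P:iterated_fibration}). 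Hence $p_1^\ast$ carries $\cO \otimes (U_{1,j_1} \boxtimes \cdots \boxtimes U_{p-1,j_{p-1}})$ to $\cO_{\Rep(Q_{\leq p})^{\rm ss}} \otimes (U_{1,j_1} \boxtimes \cdots \boxtimes U_{p-1,j_{p-1}})$ and $p_2^\ast$ carries $E_{j_p}$ to $\cO_{\Rep(Q_{\leq p})^{\rm ss}} \otimes U_{p,j_p}$, so their tensor product $F_i \boxtimes^G E_{j_p}$ is the asserted bundle $\cO_{\Rep(Q_{\leq p})^{\rm ss}} \otimes (U_{1,j_1} \boxtimes \cdots \boxtimes U_{p,j_p})$ (the restriction of $\cO_{\Rep(Q_{\leq p})} \otimes (U_{1,j_1} \boxtimes \cdots \boxtimes U_{p,j_p})$ to the semistable locus). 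For the ordering, the inductive hypothesis orders the $F_i$ reverse-lexicographically in $(j_1,\ldots,j_{p-1})$, and the remark after \Cref{L:fec_bundle} appends the fiber index $j_p$ as the most significant coordinate of a reverse lexicographic order; since reverse-lex reads tuples from the right, this is exactly the reverse lexicographic order on $(j_1,\ldots,j_p)$, and iterating up to $p=N$ yields the claimed collection and ordering. Apart from the equivariance observation of the second paragraph, every step here is a formal assembly of \Cref{P:iterated_fibration}, \Cref{L:fec_bundle}, and the projection formula.
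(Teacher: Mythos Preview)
Your proof is correct and follows exactly the approach of the paper: induct on $p$, applying \Cref{L:fec_bundle} to the fiber bundle $\Rep(Q_{\leq p})^{\rm ss}/G_{\leq p} \to \Rep(Q_{\leq p-1})^{\rm ss}/G_{\leq p-1}$ supplied by \Cref{P:iterated_fibration}. The paper's proof is only two sentences and leaves implicit the points you spell out---that the fiber collection $\cO \otimes U_{p,j}$ lifts to $\DCoh(Z_p/G_{\leq p-1})$ by letting $G_{\leq p-1}$ act trivially on $U_{p,j}$, that the resulting $F_i \boxtimes^G E_j$ are the claimed tautological bundles, and that the ordering matches---so your version is a faithful expansion rather than a different argument.
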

\begin{proof}
The claim is proved inductively for $\Rep(Q_{\leq p})^{\rm ss}(\ell_{\leq p}) / G_{\leq p}$, for $p=1,\ldots,N$. The inductive step is an application of \Cref{L:fec_bundle} to the fiber bundle $\Rep(Q_{\leq p})^{\rm ss}/G_{\leq p} \to \Rep(Q_{\leq p-1})^{\rm ss}/G_{\leq p-1}$ of $\Rep(Q)_i^{\rm ss}(\ell_i)/G_i$ that is constructed in \Cref{P:iterated_fibration}.
\end{proof}

\begin{ex}
When $G_i = \GL_{v_i}$ for some integer $v_i \geq 1$, and $V_i$ is the standard representation, then $\ell$-stability of $\rho \in \Rep(Q)$ corresponds to the condition that each map
\[
\rho_i \oplus \bigoplus_{\substack{\alpha \in Q_1 \\ t(\alpha) = i}} \rho_\alpha :k^{w_i} \oplus \bigoplus_{\substack{\alpha \in Q_1 \\ t(\alpha) = i}} V_{s(\alpha)} \to V_i 
\]
is surjective. Thus $\Rep(Q)^{\rm ss}(\ell)/G_Q$ is a quiver flag variety, as studied in \cite{C11}*{Thm.~2.2}, and \Cref{P:iterated_fibration} recovers the main result \cite{C11}*{Thm.~1.1}.
\end{ex}

\begin{ex}
\Cref{T:main_fano} and \Cref{T:main_nef_fano} provide many examples of $G_i$ and representations $V_i$ such that $(V_i^{n_i})^{\rm ss}/G_i$ admits a full strong exceptional collection of the form $\cO_{V_i^{n_i}} \otimes U_{i,j}$, which greatly increases the scope of \Cref{P:decorated_quiver_fec}.
\end{ex}

\section{Fano GIT quotients by \texorpdfstring{$G$}{G} of rank \texorpdfstring{$2$}{2}} \label{S:Fano_rank2}

The goal of this section is to show that the strong exceptional collection from \Cref{P:exceptional_collections} is full whenever the rank of the group $G$ is $2$, and the GIT parameter is the anticanonical character $\omega^\ast$ with points in the semistable locus having finite $G$-stabilizers. In this case, the resulting GIT quotients are Fano.

\begin{thm}\label{T:main_fano}
Let $G$ be a split reductive group of rank $2$ over $k$ and let $X$ be a linear representation of $G$ satisfying \Cref{H:main_setup}, which we regard as an affine $G$-scheme. Assume that the weights of $X$ span $M_\bR$. Then $G$ acts with finite stabilizers on $X^{\rm ss}(\omega^\ast)$ if and only if no weights of $X$ are proportional to $\omega^\ast$, and in this case, the vector bundles
\[
\left\{ \cO_{X^{\rm ss}(\omega^\ast)} \otimes U : U \in \Rep(G) \text{ irreducible with weights lying in }\theta + \nabla \right\}
\]
split-generate $\DCoh(X^{\rm ss}(\omega^\ast)/ G)$. In particular, if $\theta \in M_\bR^W$ is $\lambda_0$-generic, then the strong exceptional collection from \Cref{P:exceptional_collections} is full.
\end{thm}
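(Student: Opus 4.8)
The plan is to establish the two assertions of the theorem separately: the characterization of when $G$ acts with finite stabilizers on $X^{\rm ss}(\omega^\ast)$, and the split-generation statement. For the stabilizer criterion I would argue directly with the Hilbert--Mumford criterion. A positive-dimensional stabilizer at a point $x\in X^{\rm ss}(\omega^\ast)$ contains a copy of $\bG_m$ or of $\bG_a$. In the $\bG_m$ case $x$ is fixed by a nontrivial cocharacter $\mu$, which we may conjugate into $T$, and applying semistability of $x$ to $\mu$ and to $-\mu$ forces $\langle\mu,\omega^\ast\rangle=0$, so $\mu\in(\omega^\ast)^\perp$. Since $G$ has rank two this subset of $N_\bR$ is the line $\bR\lambda_1'$ for a single primitive $\lambda_1'$, and $(\lambda_1')^\perp\subset M_\bR$ is $\bR\omega^\ast$; hence $x$ lies in $X^{\lambda_1'=0}$, which is the span of the weights of $X$ that are proportional to $\omega^\ast$. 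The $\bG_a$ case reduces to this one using Jacobson--Morozov: a nontrivial unipotent element of the stabilizer gives a nontrivial homomorphism $\SL_2\to G$ landing in $G^\circ_{\omega^\ast}$ (because $\omega^\ast$ kills every $\mathfrak{sl}_2$-subalgebra of $\mathfrak{g}$), and then \Cref{L:instability} forces $x$ to have a nonzero $G^\circ_{\omega^\ast}$-fixed component, whose weight is again proportional to $\omega^\ast$. Thus finite stabilizers force no weight of $X$ to be proportional to $\omega^\ast$; conversely, if $\beta=c\omega^\ast$ is a weight, a generic vector of the weight space $X_\beta$ is fixed by the positive-dimensional group $\ker(\beta)\subset T$ and one checks via \Cref{L:instability} that it is $\omega^\ast$-semistable.

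For split-generation I would follow the two steps outlined in the introduction. \emph{Step 1: reduce to the $\lambda_0$-strip.} Since $\langle\lambda_0,\omega^\ast\rangle<0$, the origin is $\omega^\ast$-unstable, so the coordinate functions of $X$ (a basis of $X^\ast\subset\Gamma(\cO_X)$) have no common zero on $X^{\rm ss}$; consequently the Koszul complex on these functions, and its dual, become exact complexes of vector bundles on $X^{\rm ss}$. Tensoring with $\cO_{X^{\rm ss}}\otimes U$ then shows that $\cO_{X^{\rm ss}}\otimes U$ lies in the triangulated subcategory generated by the $\cO_{X^{\rm ss}}\otimes(\wedge^i X\otimes U)$ (respectively $\cO_{X^{\rm ss}}\otimes(\wedge^i X^\ast\otimes U)$) for $i\geq1$, and every irreducible constituent of these has $\wt_{\lambda_0}$ strictly smaller (respectively larger) than $\wt_{\lambda_0}(U)$, differing from it by at most $\eta_{\lambda_0}=-\langle\lambda_0,\omega^\ast\rangle$. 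Iterating, together with \Cref{L:big_collection} and the fact that restriction $\DCoh(X/G)\to\DCoh(X^{\rm ss}/G)$ is essentially surjective, shows that $\DCoh(X^{\rm ss}/G)$ is split-generated by those $\cO_{X^{\rm ss}}\otimes U$ with $U$ irreducible and weights lying in the $\lambda_0$-strip $\theta+B_{\lambda_0}$.

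\emph{Step 2: prune to the barrel window.} Here rank two and the hypothesis that no weight of $X$ is proportional to $\omega^\ast$ enter essentially. The latter gives $X^{\lambda_1'=0}=0$, so $X=X^{\lambda_1'>0}\oplus X^{\lambda_1'<0}$; moreover, for $a\gg0$ the cocharacter $a\lambda_2'+\lambda_0$ (with $\lambda_2':=-\lambda_1'$) is $\omega^\ast$-destabilizing and contracts $X^{\lambda_1'<0}$, so $X^{\lambda_1'<0}\subset X^{\rm us}(\omega^\ast)$, and symmetrically $X^{\lambda_1'>0}\subset X^{\rm us}(\omega^\ast)$. Therefore the coordinate functions dual to $X^{\lambda_1'>0}$, respectively to $X^{\lambda_1'<0}$, also have no common zero on $X^{\rm ss}$, and the corresponding Koszul complexes are exact there; these are equivariant for the parabolic of $G$ attached to $\lambda_1'$, and inducing up to $G$ produces $G$-equivariant complexes whose terms are filtered with graded pieces $\cO_{X^{\rm ss}}\otimes V$. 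Tensoring as in Step 1 gives moves that strictly decrease $\max_\chi\langle\lambda_1',\chi-\theta\rangle$, or $\max_\chi\langle\lambda_2',\chi-\theta\rangle$, over the weights $\chi$ of the bundle. Combining these with the $\lambda_0$-moves of Step 1 and inducting on a suitable measure of ``distance to $\theta+\nabla$'' — primarily the violation of the inequalities $\langle\lambda_j',\chi-\theta\rangle\leq\eta_{\lambda_j'}/2$, with a secondary induction on $\langle\lambda_0,\chi-\theta\rangle$ to deal with the boundary points that are removed in passing from $\overline{\nabla}$ to the non-closed window $\nabla$ — one reduces to the case that all weights of $U$ lie in $\theta+\nabla$.

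The step I expect to be the main obstacle is Step 2: keeping all weights inside the $\lambda_0$-strip while pushing them across the $\lambda_j'$-hyperplanes are competing requirements, and the precise shape of the barrel window is exactly what makes the induction terminate. The right way to organize this is to think of the pinched regions $\overline{\nabla}_{\ell,t}$ of \Cref{R:t_perturbed_regions} — with $\lambda_j'$ replaced by the genuinely destabilizing cocharacter $\lambda_j'+t\lambda_0$ — as the regions for which a single Koszul move respects all constraints simultaneously, and then to invoke the lattice-point identification $(\theta+\overline{\nabla}_{\ell,t})\cap M=(\theta+\nabla)\cap M$ from \Cref{R:t_perturbed_regions} to pass from this geometric picture to a statement about representations. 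Rank two enters precisely in making $(\omega^\ast)^\perp$ one-dimensional, so that only $\pm\lambda_1'$ and $\lambda_0$ are ever involved. Finally, granting split-generation, fullness of the collection of \Cref{P:exceptional_collections} for $\lambda_0$-generic $\theta$ is automatic, since a strong exceptional collection generates an idempotent-complete triangulated subcategory.
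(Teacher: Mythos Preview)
Your overall strategy matches the paper's: reduce to the $\lambda_0$-strip via the Koszul complex for the origin, then prune via $G$-equivariant complexes built from Koszul resolutions of $X^{\lambda'\geq 0}$ induced from the parabolic (this is exactly the paper's $C_{\lambda',\chi}$ and $D^\vee_{\lambda',\chi}$, with Borel--Weil--Bott controlling the filtration pieces). Two points need attention.

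First, the stabilizer criterion. Your $\bG_a$ case via Jacobson--Morozov is more delicate than necessary, and the step ``\Cref{L:instability} forces $x$ to have a nonzero $G^\circ_{\omega^\ast}$-fixed component'' is not justified: \Cref{L:instability} tells you $0\notin\overline{G^\circ_{\omega^\ast}\cdot x}$, which is a statement about orbit closures, not fixed components. The paper sidesteps this entirely by choosing $x\in X^{\rm ss}$ with minimal-dimensional orbit, so that $G\cdot x$ is closed in $X^{\rm ss}$ and hence $\Stab(x)$ is reductive; then a positive-dimensional reductive stabilizer contains a nontrivial torus and you are in your $\bG_m$ case. You should also note the reduction from $G$ to $G^\circ$, which the paper carries out separately.

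Second, and more seriously, your Step~2 is where the actual content lies, and you have correctly identified the obstacle but not resolved it. The $t$-perturbed regions $\overline{\nabla}_{\ell,t}$ are motivational, but the paper does not use them in the proof. What makes the induction go through is a \emph{case split on $\langle\lambda_0,\chi-\theta\rangle$ relative to the threshold $Q_{\lambda'}:=\tfrac{1}{2}\eta_{\lambda_0}+\langle\lambda_0,\det X^{\lambda'<0}\rangle$}: below $Q_{\lambda'}$ one uses $C_{\lambda',\chi}$ (which adds $-\beta_{i_j}$ with $\langle\lambda',\beta_{i_j}\rangle<0$, so $\lambda_0$-weight goes up), and above $Q_{\lambda'}$ one uses $D^\vee_{\lambda',\chi}$ (which adds $+\beta_{i_j}$ with $\langle\lambda',\beta_{i_j}\rangle>0$, so $\lambda_0$-weight goes down). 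The threshold is exactly what guarantees that the new weights $\mu$ stay inside $\theta+B_{\lambda_0}$ while strictly decreasing the $\lambda'$-distance quantity $r_\chi$ of the paper's \eqref{eq:r_chi}. One must also track the effect of the $\rho$-shifted Weyl reflection $\mu\mapsto\mu^+$ on both pairings; this is where Borel--Weil--Bott enters non-trivially, and the paper's Claims~\ref{Claim:r_mu_less_1}--\ref{Claim:r_mu_less_2} verify these bounds by hand. Your induction sketch (``primarily the violation of $\langle\lambda_j',\chi-\theta\rangle\leq\eta_{\lambda_j'}/2$, secondarily $\langle\lambda_0,\chi-\theta\rangle$'') is close, but without the $Q_{\lambda'}$-split you have no mechanism to prevent the $\lambda'$-move from pushing weights out of the $\lambda_0$-strip, and the induction does not close. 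The final passage from $\overline{\nabla}$ to $\nabla$ (the paper's Step~3) is a further application of the same complexes, now inducting on $\alpha_\chi=2\lvert\langle\lambda_0,\chi-\theta\rangle\rvert/\eta_{\lambda_0}$ along the boundary facets.
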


We begin with a recollection of certain minimal locally free resolutions of equivariant sheaves.

\subsection{Minimal resolutions of equivariant sheaves}

For an anti-dominant cocharacter $\lambda\in N$, that is $\langle \lambda,\alpha \rangle\geq0$ for all negative roots $\alpha$, the $\lambda$-attracting locus $X^{\lambda \geq 0}$ is a $B$-module and $G \times^B X^{\lambda \geq 0}$ is a vector bundle over $G/B$. Consider the diagram of $G$-equivariant maps
\[
\begin{tikzcd}
G\times^{B}X^{\lambda\geq0} \arrow[r, hook, "j"] \arrow[d]
& G/B\times X \arrow[d, "\pi"] \arrow[r, "p"] & G/B \\ \pi(Z(\lambda)) \arrow[r, hook]
&X  
\end{tikzcd} ,
\]
where $Z(\lambda)$ denotes the image $j (G \times^{B} X^{\lambda\geq0})$, the maps
$\pi$ and $p$ are usual projections, and $p \circ j$ is the bundle map $G \times^{B} X^{\lambda \geq 0} \rightarrow G/B$.

Let $\xi(\lambda)$ be the dual of the sheaf of sections of $G \times^B X^{\lambda < 0}$ over $G/B$. By \cite{W03}*{Prop.~5.1.1}, there is a Koszul resolution of $\cO_{Z(\lambda)}$ as an $\cO_{G/B \times X}$-module
\begin{equation}
\cK (\lambda)_{\bullet} : 0 \rightarrow \textstyle \bigwedge^{r} \left( p^{*} \xi ( \lambda ) \right) \rightarrow \cdots \rightarrow \textstyle \bigwedge^{2} \left( p^{*} \xi (\lambda ) \right) \rightarrow p^{*} \xi (\lambda ) \rightarrow \cO_{G/B \times X} , 
\end{equation}
where $r = \rank \xi (\lambda)$.

\begin{defn}
For any character $\chi \in M$, define $\cL(\chi)$ to be the associated sheaf of sections of the line bundle $G \times^B k \langle \chi \rangle$ over $G/B$, where $k \langle \chi \rangle$ is the one dimensional $B$-representation associated to $\chi$.
\end{defn}

Let $\chi \in M$ be any character. The complex
\[
\cK \left( \lambda, \cL(\chi) \right)_{\bullet} := \cK \left( \lambda \right)_{\bullet}\otimes_{\cO_{G/B\times X}} p^{*} \cL(\chi)
\]
is a locally free resolution of $\cO_{Z (\lambda)} \otimes_{\cO_{G/B \times X}} p^{*} \cL(\chi)$. The results in \cite{W03}*{Thm.~5.1.2,~Thm.~5.4.1} apply to $\cK\left(\lambda,\cL(\chi)\right)_{\bullet}$ yielding the following. 

\begin{prop} \label{P:minimal_complex}
The complex $R \pi_\ast (\cO_{Z(\lambda)} \otimes_{\cO_{G/B\times X}} p^\ast \cL(\chi))$ is quasi isomorphic to a minimal $G$-equivariant complex $C_{\lambda,\chi}$ of free graded $\cO_X$-modules with
\begin{equation} \label{E:C_complex}
\left( C_{\lambda, \chi } \right)_{i} = \bigoplus_{j \in \bZ_{\geq 0} } H^{j}(G/B, \cL(\chi) \otimes \textstyle \bigwedge^{i+j} \xi (\lambda) ) \otimes_k \cO_X .
\end{equation} 
In addition, $C_{\lambda,\chi}$ has homology supported in $G \cdot X^{\lambda \geq 0}$. Since $\lambda$ is anti-dominant, then so is $-w_0\lambda$ and we define the complex 
\begin{equation} \label{E:D_complex}
    D_{\lambda,\chi} := C_{-w_0\lambda,-w_0\chi}.
\end{equation}
\end{prop}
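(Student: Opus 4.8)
The plan is to realize the proposition as an equivariant instance of the ``geometric technique'' of Kempf and Weyman \cite{W03}. Since $\lambda$ is anti-dominant, $X^{\lambda\geq 0}\subset X$ is a $B$-submodule, so inside the trivial bundle $G/B\times X = G\times^{B} X$ the attracting variety $Z(\lambda)=G\times^{B}X^{\lambda\geq 0}$ is the total space of a sub-bundle whose dual quotient bundle is exactly $\xi(\lambda)$; the Koszul complex $\cK(\lambda)_\bullet=\bigwedge^\bullet p^\ast\xi(\lambda)$ recalled above, twisted by $p^\ast\cL(\chi)$, is a bounded $G$-equivariant locally free resolution of $\cO_{Z(\lambda)}\otimes p^\ast\cL(\chi)$. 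First I would push this resolution forward along $\pi\colon G/B\times X\to X$, which is proper because $G/B$ is, and compute each term by flat base change along $X\to\Spec(k)$, getting $R\pi_\ast\!\left(p^\ast(\bigwedge^{a}\xi(\lambda)\otimes\cL(\chi))\right)\cong R\Gamma\!\left(G/B,\bigwedge^{a}\xi(\lambda)\otimes\cL(\chi)\right)\otimes_k\cO_X$. Because complexes of $k$-vector spaces are formal, this is the direct sum of its cohomology groups, so $R\pi_\ast(\cK(\lambda,\cL(\chi))_\bullet)$ is computed by the totalization of a bounded double complex of finite free graded $\cO_X$-modules whose columns are the groups $H^j(G/B,\bigwedge^{a}\xi(\lambda)\otimes\cL(\chi))\otimes_k\cO_X$.

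This is precisely the input of Weyman's Basic Theorem. Applying \cite{W03}*{Thm.~5.1.2} ``rolls up'' the double complex into a single complex of finite free graded $\cO_X$-modules, quasi-isomorphic to $R\pi_\ast(\cO_{Z(\lambda)}\otimes p^\ast\cL(\chi))$, whose $i$-th term is $\bigoplus_{j\geq 0}H^j(G/B,\cL(\chi)\otimes\bigwedge^{i+j}\xi(\lambda))\otimes_k\cO_X$, as in \eqref{E:C_complex}, and whose homology is supported on $\pi(Z(\lambda))$; the latter equals $G\cdot X^{\lambda\geq 0}$, which is closed because $Z(\lambda)\hookrightarrow G/B\times X$ is a closed immersion and $\pi$ is proper. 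To obtain the \emph{minimal} complex $C_{\lambda,\chi}$ I would then invoke \cite{W03}*{Thm.~5.4.1}: since $\cO_X=\Sym(X^\ast)$ is graded with degree-zero part $k$ and the relevant cohomology groups sit in internal degree $0$, the complex admits a minimal model --- one whose differential has all matrix entries in the irrelevant ideal $\mathfrak{m}=\Sym^{\geq 1}(X^\ast)$ --- with the same terms, unique up to isomorphism. (The Koszul part of the differential already lies in $\mathfrak{m}$, since the defining equations of $Z(\lambda)$ are linear in the coordinates of $X$; the substance of minimality is that the remaining correction terms can be taken in $\mathfrak{m}$ as well.)

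It remains to install the $G$-action and to record the companion complex. Every object and map in the construction --- $j$, $p$, $\pi$, the line bundle $\cL(\chi)$, and the Koszul resolution $\cK(\lambda,\cL(\chi))_\bullet$ --- is $G$-equivariant, so the double complex above is a complex of $(\cO_X,G)$-modules and the rolled-up complex inherits this structure; since $k$ has characteristic zero, $\Rep(G)$ is semisimple, so the passage to the minimal model (which amounts to splitting off acyclic summands $\cO_X\xrightarrow{\,\id\,}\cO_X$) can be carried out $G$-equivariantly --- equivalently, uniqueness of the minimal complex forces it to carry a compatible $G$-action. This produces $C_{\lambda,\chi}$. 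Finally $D_{\lambda,\chi}$ is defined to be $C_{-w_0\lambda,-w_0\chi}$, which is legitimate because $w_0$ interchanges the positive and negative roots, so $-w_0\lambda$ is anti-dominant whenever $\lambda$ is. I expect the argument to be essentially mechanical once Weyman's theorems are invoked; the one point demanding care is this last piece of equivariance bookkeeping, and in characteristic zero it presents no genuine obstacle.
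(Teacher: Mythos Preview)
Your proposal is correct and follows exactly the approach the paper takes: the paper does not give a separate proof but simply states that the proposition follows by applying \cite{W03}*{Thm.~5.1.2,~Thm.~5.4.1} to the twisted Koszul resolution $\cK(\lambda,\cL(\chi))_\bullet$. You have spelled out in detail how those two theorems yield the claimed terms, the support of the homology, minimality, and $G$-equivariance, which is precisely the intended argument.
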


\subsubsection{Filtrations of $C_{\lambda, \chi}$ and $D^\vee_{\lambda, \chi}$}

Recall that the Weyl group $W$ induces a $\ast$-action on the weight lattice defined as follows: given $\mu\in M$ and $w \in W$, 
\[
    w \ast\mu := w\left( \mu+\rho \right) - \rho ,
\]
where $\rho$ is the half sum of positive roots. If the stabilizer of $\mu+\rho$ in $W$ is trivial, then there is a unique $w\in W$ such that $w\ast\mu$ is dominant.

\begin{defn}\label{D:mu_plus}
If $\Stab(\mu+\rho)$ is trivial, define $\mu^+$ to be $w \ast \mu$, where $w$ is the unique element of the Weyl group such that $w \ast \mu$ is dominant. Otherwise, we say $\mu^+$ does not exist. When $G$ is connected and $\mu^+$ exists,
\begin{itemize}
    \item[] $V(\mu^+)$ denotes the irreducible representation with highest weight $\mu^+$.
\end{itemize}
We will use the convention that $V(\mu^+)=0$ if $\mu^+$ does not exist.
\end{defn}

\begin{rec} [The Borel-Weil-Bott Theorem] 
Suppose $G$ is connected and let $\mu\in M$. If $\mu+\rho$ has a non-trivial stabilizer in $W$, then all the cohomology groups $H^i\left(G/B,\mathcal{L}(\mu)\right)$ vanish. Otherwise, there exists a unique non-vanishing cohomology group $H^{\ell(w)}\left(G/B,\cL(\mu)\right)$, where $\ell(w)$ is the length of the unique element $w \in W$ such that $w\ast\mu$ is dominant. In this case, $H^{\ell(w)}\left(G/B,\cL(\mu)\right) \cong V\left( \mu^+ \right)$.
\end{rec}

The lemmas below are proved in \cite{HLS20}*{Prop.~3.8,~Prop.~3.9} but we restate them here for ease of reference.

\begin{lem}\label{L:complex_1}
Assuming $G$ is connected, the complex $C_{\lambda,\chi}$ has one term of the form $\cO_X \otimes V(\chi)$ and the remaining terms have $G$-equivariant filtrations whose associated graded pieces are locally free sheaves of the form $\cO_X\otimes V(\mu^+)$, where $\mu=\chi-\beta_{i_1}-\beta_{i_2}-\cdots-\beta_{i_p}$ with $p > 0$, $i_1,\ldots,i_p$ distinct, and $\langle\lambda,\beta_{i_j} \rangle < 0$ for all $j =1, \ldots, p$.
\end{lem}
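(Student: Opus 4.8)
The statement to prove is \Cref{L:complex_1}, which describes the structure of the complex $C_{\lambda,\chi}$.

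\begin{proof}[Proof proposal]
The plan is to unpack the description of the terms of $C_{\lambda,\chi}$ from \eqref{E:C_complex} and apply Borel--Weil--Bott termwise. Recall that $\xi(\lambda)$ is the dual of the sheaf of sections of $G \times^B X^{\lambda<0}$ over $G/B$, so as a $B$-representation the fiber of $\xi(\lambda)$ is $(X^{\lambda<0})^\ast$, which has a $B$-stable filtration with associated graded pieces the one-dimensional weight spaces $k\langle -\beta_i \rangle$ for those weights $\beta_i$ of $X$ with $\langle \lambda, \beta_i \rangle < 0$. Consequently $\bigwedge^{m} \xi(\lambda)$ has a $B$-stable filtration whose graded pieces are $k\langle -\beta_{i_1} - \cdots - \beta_{i_m}\rangle$ for distinct indices $i_1,\ldots,i_m$ with each $\langle \lambda, \beta_{i_j}\rangle < 0$, and hence $\cL(\chi) \otimes \bigwedge^m \xi(\lambda)$ has a filtration by line bundles $\cL(\mu)$ with $\mu = \chi - \beta_{i_1} - \cdots - \beta_{i_m}$. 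Since we are in characteristic zero, taking cohomology $H^j(G/B, -)$ is exact on short exact sequences in each fixed degree only up to the usual long exact sequence, but the key point is that each $H^j(G/B, \cL(\mu))$ is computed by Borel--Weil--Bott: it vanishes unless $\mu + \rho$ has trivial $W$-stabilizer, in which case it is concentrated in degree $j = \ell(w)$ and equals $V(\mu^+)$.

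First I would treat the distinguished term: the summand $\bigwedge^0 \xi(\lambda) = \cO_{G/B}$ contributes $H^0(G/B, \cL(\chi)) = V(\chi^+)$ when $\chi$ is dominant (which is the relevant case, since $\chi \in M$ will be a dominant weight $= \mu^+$ coming from the application), giving one term $\cO_X \otimes V(\chi)$ in homological degree $0$. Then for the remaining terms, coming from $\bigwedge^{m}\xi(\lambda)$ with $m = i+j > 0$: I would argue that the equivariant filtration on $\cL(\chi)\otimes\bigwedge^m \xi(\lambda)$ by line bundles $\cL(\mu)$, $\mu = \chi - \beta_{i_1}-\cdots-\beta_{i_m}$, induces after applying $R\pi_\ast$ (equivalently, after taking the hypercohomology spectral sequence) a $G$-equivariant filtration on the corresponding piece of $C_{\lambda,\chi}$ whose associated graded pieces are $H^{\ell(w)}(G/B,\cL(\mu))\otimes \cO_X = V(\mu^+)\otimes\cO_X$. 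This requires knowing that the filtration is compatible with the minimality of $C_{\lambda,\chi}$, i.e.\ that passing to the minimal complex does not destroy the filtration structure — this is exactly what \cite{HLS20}*{Prop.~3.8} establishes, and I would cite it, but if reproving it one uses that the connecting maps in the Bott filtration are $G$-equivariant and the minimalization can be carried out equivariantly.

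The condition $p > 0$ with $i_1,\ldots,i_p$ distinct and $\langle\lambda,\beta_{i_j}\rangle < 0$ is then immediate from the construction: $p = m = i+j > 0$ since these terms come from $\bigwedge^{\geq 1}\xi(\lambda)$, distinctness is forced by the exterior power, and the sign condition is the defining property of the weights appearing in $X^{\lambda<0}$. I would also note the convention $V(\mu^+) = 0$ when $\mu^+$ does not exist handles the graded pieces where $\mu + \rho$ has nontrivial stabilizer, so those simply do not appear.

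The main obstacle is the bookkeeping needed to show that the Bott filtration on each term of the Koszul complex survives the passage to the \emph{minimal} complex $C_{\lambda,\chi}$ as a $G$-equivariant filtration; the homology and exactness manipulations with the hypercohomology spectral sequence are routine in characteristic zero, but one must be careful that the minimalization (removing acyclic summands) is performed $G$-equivariantly and compatibly with the filtration. Since this is precisely the content of \cite{HLS20}*{Prop.~3.8}, which the excerpt explicitly invokes, I would simply cite it rather than redo it, and spend the bulk of the proof on the weight computation identifying the graded pieces with the stated $V(\mu^+)$.
\end{proof}
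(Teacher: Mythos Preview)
Your proposal is correct and follows essentially the same approach as the paper: identify the $B$-equivariant filtration of $\cL(\chi)\otimes\bigwedge^{\bullet}\xi(\lambda)$ by the line bundles $\cL(\chi-\beta_{i_1}-\cdots-\beta_{i_p})$ with $\langle\lambda,\beta_{i_j}\rangle<0$, and then apply Borel--Weil--Bott. The paper's own proof is in fact much terser than yours and defers the bookkeeping about minimalization and equivariant filtrations to the cited reference \cite{HLS20}*{Prop.~3.8}, exactly as you anticipated.
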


\begin{proof}
Note that $\cL (\chi) \otimes \bigwedge^{\bullet} \xi (\lambda)$
admits a $G$-equivariant filtration by locally free sheaves of the form $\cL \left( \chi - \beta_{i_1} - \beta_{i_2}- \ldots - \beta_{i_p} \right)$,
where $i_1,\cdots,i_p$ are distinct and $\left \langle \lambda, \beta_{i_j} \right \rangle <0$. The result then follows by applying Borel-Weil-Bott.
\end{proof}

\begin{lem}\label{L:complex_2}
Assuming $G$ is connected, the complex $D^\vee_{\lambda,\chi}$ has one term of the form $\cO_X \otimes V(\chi)$ and the remaining terms have $G$-equivariant filtrations whose associated graded pieces are locally free sheaves of the form $\cO_X\otimes V(\mu^+)$, where $\mu=\chi+\beta_{i_1}+\beta_{i_2}+\cdots+\beta_{i_p}$ with $p > 0$, $i_1,\ldots, i_p$ distinct, and $\langle \lambda, \beta_{i_j} \rangle > 0$ for all $j=1,\ldots,p$.
\end{lem}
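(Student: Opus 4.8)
The plan is to obtain the lemma from \Cref{L:complex_1} by passing to $\cO_X$-linear duals. By the definition \eqref{E:D_complex} we have $D_{\lambda,\chi} = C_{-w_0\lambda,-w_0\chi}$, and $-w_0\lambda$ is anti-dominant since $\lambda$ is, so \Cref{L:complex_1} describes $C_{-w_0\lambda,-w_0\chi}$: it has one term $\cO_X \otimes V(-w_0\chi)$, while every other term carries a $G$-equivariant filtration with associated graded pieces $\cO_X \otimes V(\nu^+)$, where $\nu = -w_0\chi - \beta_{i_1} - \cdots - \beta_{i_p}$ with $p>0$, the $i_j$ distinct, and $\langle -w_0\lambda, \beta_{i_j}\rangle < 0$ for all $j$. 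The $\cO_X$-linear duality functor $(-)^\vee$ is exact on bounded complexes of finite free $\cO_X$-modules, carries $\cO_X \otimes V$ to $\cO_X \otimes V^\vee$, and turns a $G$-equivariant filtration with associated graded pieces $P_1,\ldots,P_m$ into a $G$-equivariant filtration (in the reverse order) with associated graded pieces $P_m^\vee,\ldots,P_1^\vee$. Applying it to $C_{-w_0\lambda,-w_0\chi}$ thus presents $D^\vee_{\lambda,\chi}$ with one term $\cO_X \otimes V(-w_0\chi)^\vee$ and all other terms filtered with graded pieces $\cO_X \otimes V(\nu^+)^\vee$.

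It then remains to rewrite these pieces in the claimed form. First, $V(-w_0\chi)^\vee \cong V(\chi)$, because the dual of the irreducible with dominant highest weight $\xi$ has highest weight $-w_0\xi$, and $-w_0(-w_0\chi)=\chi$. Next I would use the identity $V(\mu^+)^\vee \cong V((-w_0\mu)^+)$, valid for every $\mu \in M$ with the convention that both sides are $0$ when $\mu^+$ does not exist. This holds because $-w_0\mu + \rho = -w_0(\mu+\rho)$ (using $w_0\rho = -\rho$), so the involution $\iota: \xi \mapsto -w_0\xi$ of $M$ sends $\mu + \rho$ to $-w_0\mu + \rho$; moreover $\iota$ preserves dominance and satisfies $\iota \circ w = (w_0 w w_0)\circ \iota$ for all $w \in W$, hence it carries the unique $w$ with $w \ast \mu$ dominant to the unique element making $-w_0\mu$ dominant, which gives $(-w_0\mu)^+ = -w_0(\mu^+)$ whenever either side exists, and $V(\eta)^\vee \cong V(-w_0\eta)$ for dominant $\eta$ then yields the identity. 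In the degenerate case one checks that $\mu+\rho$ has nontrivial $W$-stabilizer if and only if $-w_0(\mu+\rho)$ does, so the $0$ convention is respected on both sides. Applying the identity with $\mu = \nu$ rewrites the graded pieces as $\cO_X \otimes V((-w_0\nu)^+)$.

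Finally, $-w_0\nu = \chi + w_0\beta_{i_1} + \cdots + w_0\beta_{i_p}$. Since $X$ is a $G$-representation, the multiset of weights of $X$ is $W$-invariant, so each $w_0\beta_{i_j}$ is again a weight $\beta_{i'_j}$ of $X$, and the $i'_j$ remain distinct because $w_0$ permutes the weights; moreover, $W$-invariance of the canonical pairing gives $\langle -w_0\lambda, \beta_{i_j}\rangle = -\langle \lambda, w_0\beta_{i_j}\rangle = -\langle \lambda, \beta_{i'_j}\rangle$, so the hypothesis $\langle -w_0\lambda, \beta_{i_j}\rangle < 0$ becomes $\langle \lambda, \beta_{i'_j}\rangle > 0$. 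Thus $-w_0\nu = \chi + \beta_{i'_1} + \cdots + \beta_{i'_p}$ has exactly the form $\mu$ in the statement, the graded pieces are $\cO_X \otimes V(\mu^+)$, and the distinguished term is $\cO_X \otimes V(-w_0\chi)^\vee = \cO_X \otimes V(\chi)$, as required. The one step that needs genuine care is the duality identity $V(\mu^+)^\vee \cong V((-w_0\mu)^+)$ together with its degenerate case; everything else is routine bookkeeping with the $\ast$-action and the $W$-symmetry of the weights of $X$.
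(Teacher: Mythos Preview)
Your proof is correct and follows essentially the same route as the paper: apply the description of $C_{-w_0\lambda,-w_0\chi}$ (the paper redoes the filtration and Borel--Weil--Bott step inline rather than quoting \Cref{L:complex_1}, but the content is identical), dualize, and invoke the identity $V((-w_0\alpha)^+)^\vee \cong V(\alpha^+)$. The only cosmetic difference is that the paper cites this last identity from \cite{HLS20}*{Prop.~3.7} whereas you supply the short verification, and you make the relabeling $w_0\beta_{i_j}\mapsto \beta_{i'_j}$ via $W$-invariance of the weight multiset explicit where the paper leaves it implicit.
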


\begin{proof}
Note that $\cL (- w_0 \chi) \otimes \bigwedge^{\bullet} \xi (- w_0 \lambda)$ admits a $G$-equivariant filtration by locally free sheaves of the form $\cL \left( - w_0 \chi - \beta_{i_1} - \beta_{i_2}- \ldots - \beta_{i_p} \right)$,
where $\left \langle -w_0 \lambda, \beta_{i_j} \right \rangle < 0$ and $i_1,\cdots,i_p$ are distinct. The result then follows by applying Borel-Weil-Bott and the isomorphism $V((- w_0 \alpha)^+)^\vee \cong V(\alpha^+)$ (\cite{HLS20}*{Prop.~3.7}), which holds for any weight $\alpha \in M$ for which $\alpha^+$ is defined. 
\end{proof}

\subsubsection{A criterion for $C_{\lambda, \chi}$ and  $D^\vee_{\lambda, \chi}$ to be unstably supported}

The lemma below implies that when $G$ has rank $2$ and $\lambda \in \ell^\perp$, the complexes $C_{\lambda, \chi}$ and $D_{\lambda, \chi}^\vee$ have homology supported in $X^{\rm us} (\ell)$.

\begin{lem}\label{L:unstable_locus}
Let $X$ be a linear representation of a reductive group $G$ of rank $2$ that satisfies \Cref{H:main_setup} and let $\ell \in M_\bR^W$ with $\langle \lambda_0, \ell \rangle < 0$. Then $G$ acts with finite stabilizers on $X^{\rm ss}(\ell)$ if and only if the weights of $X$ span $M_\bR$ and no weight of $X$ is proportional to $\ell$. In this case,
\[
 G \cdot X^{\lambda' \geq 0} \subset X^{\rm us} (\ell)
\]
for any non-zero cocharacter $\lambda' \in N_\bR $ such that $\langle \lambda',\ell \rangle = 0$.
\end{lem}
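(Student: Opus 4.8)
The plan is to prove the three assertions in turn: first that the numerical conditions (weights spanning $M_\bR$, no weight proportional to $\ell$) force finite stabilizers on $X^{\rm ss}(\ell)$, then that they are necessary, and finally the inclusion $G\cdot X^{\lambda'\geq 0}\subseteq X^{\rm us}(\ell)$ — which only uses the conclusion of the equivalence and so comes last. Two facts are used throughout: the Hilbert--Mumford description $X^{\rm us}(\ell)=\bigcup_{\langle\lambda,\ell\rangle<0}G\cdot X^{\lambda\geq 0}$, and the remark that since $\langle\lambda_0,\ell\rangle<0$ the open half-space $\{\mu\in N_\bR:\langle\mu,\ell\rangle<0\}$ is nonempty and hence contains integral cocharacters; in particular $0\in X^{\rm us}(\ell)$.

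\emph{Sufficiency.} Assuming the weights span $M_\bR$ and no weight is proportional to $\ell$, I would suppose $x\in X^{\rm ss}(\ell)$ has $\dim G_x>0$ and derive a contradiction. If $G_x$ contains a nontrivial torus, pick a nontrivial cocharacter $\mu$ of it, so $x\in X^{\mu=0}$. Since $\ell\ne 0$ and $N_\bR$ is two-dimensional, $\ell^\perp$ is a line; if $\mu\in\ell^\perp$ then $\mu$ spans it and $X^{\mu=0}$, being spanned by weights proportional to $\ell$, is zero, forcing the impossible $x=0$; if $\langle\mu,\ell\rangle\ne 0$, then after replacing $\mu$ by $\pm\mu$ we have $x\in X^{\mu\geq 0}$ with $\langle\mu,\ell\rangle<0$, so $x$ is unstable. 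Otherwise $G_x^\circ$ has trivial maximal torus, so (Levi decomposition) it is a nontrivial unipotent group; a nonzero $n\in\operatorname{Lie}(G_x^\circ)$ is then a nilpotent element of $\mathfrak{g}$ annihilating $x$, hence lies in the derived subalgebra $[\mathfrak{g},\mathfrak{g}]$ and therefore in $\mathfrak{g}_\ell:=\operatorname{Lie}(G^\circ_\ell)$ (which contains $[\mathfrak{g},\mathfrak{g}]$, as $\ell$ is a character). Jacobson--Morozov inside the reductive $\mathfrak{g}_\ell$ places $n$ in an $\mathfrak{sl}_2$-triple whose semisimple element exponentiates to a nontrivial cocharacter $\nu$ of $G^\circ_\ell$ with respect to which $n$ has weight $2$; in particular $\langle\nu,\ell\rangle=0$. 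Regarding $X$ as a module over the corresponding $\SL_2$, $x\in\ker n$ means $x$ is a sum of highest-weight vectors, so every $\nu$-weight occurring in $x$ is $\geq 0$; since $\nu$ spans $\ell^\perp$, the space $X^{\nu=0}$ is again zero, so every weight occurring in $x$ pairs strictly positively with $\nu$. Then for $0<t\ll1$ the cocharacter $\nu+t\lambda_0$ pairs strictly positively with all weights of $x$ and strictly negatively with $\ell$, so a nearby integral $\lambda$ gives $x\in X^{\lambda\geq 0}$ with $\langle\lambda,\ell\rangle<0$, contradicting semistability.

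\emph{Necessity.} Here I would assume $X^{\rm ss}(\ell)$ is nonempty with finite stabilizers (the empty case being vacuous). If the weights spanned a proper subspace of $M_\bR$, a nonzero cocharacter orthogonal to all of them would act trivially on $X$ and lie in every stabilizer, a contradiction; so the weights span $M_\bR$. If some weight $\beta_0$ were proportional to $\ell$, write $\beta_0=c\ell$; pairing with $\lambda_0$ gives $c>0$. For $0\ne x\in X_{\beta_0}$, every cocharacter of $G^\circ_\ell$ pairs to $0$ with $\ell$, hence with $\beta_0$, so fixes $x$; thus $0\notin\overline{G^\circ_\ell\cdot x}$ by Hilbert--Mumford for $G^\circ_\ell$, and $x\in X^{\rm ss}(\ell)$ by \Cref{L:instability}. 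But $\ker(\beta_0\colon T\to\bG_m)$ is positive-dimensional and fixes $x$, contradicting finiteness of $G_x$. So no weight is proportional to $\ell$.

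\emph{The inclusion, and the main obstacle.} Now assuming the weights span $M_\bR$ and no weight is proportional to $\ell$, take a nonzero $\lambda'\in N_\bR$ with $\langle\lambda',\ell\rangle=0$. As $\ell\ne 0$, $\lambda'$ spans $\ell^\perp$, so $(\lambda')^\perp=\bR\ell$; no weight lies on $\bR\ell$, so every weight of $X$ pairs nonzero with $\lambda'$, whence $X^{\lambda'\geq 0}=X^{\lambda'>0}$ and every weight occurring in a point of $X^{\lambda'\geq 0}$ pairs strictly positively with $\lambda'$. Fixing $x\in X^{\lambda'\geq 0}$ with occurring weights $\beta_1,\dots,\beta_r$, the open cone $\{\mu:\langle\mu,\beta_j\rangle>0\ \forall j\}$ contains $\lambda'$, hence also $\lambda'+t\lambda_0$ for $0<t\ll1$, which pairs strictly negatively with $\ell$; so this cone meets $\{\langle\cdot,\ell\rangle<0\}$ and contains an integral $\lambda$, giving $x\in X^{\lambda\geq 0}$ with $\langle\lambda,\ell\rangle<0$, i.e.\ $x\in X^{\rm us}(\ell)$; $G$-invariance of $X^{\rm us}(\ell)$ then gives $G\cdot X^{\lambda'\geq 0}\subseteq X^{\rm us}(\ell)$. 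I expect the main obstacle to be the unipotent case of the sufficiency direction: the perturbation argument only detects cocharacters in $\ell^\perp$, so one must first manufacture such a cocharacter from a positive-dimensional unipotent stabilizer — precisely the role of $n\in\mathfrak{g}_\ell$ plus Jacobson--Morozov. (A less self-contained alternative: replace $x$ by the closed orbit in $\overline{G\cdot x}$ and apply Matsushima's criterion to reduce to the torus case.)
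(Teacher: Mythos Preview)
Your argument is correct, and the necessity direction and the final inclusion track the paper closely. The real divergence is in the sufficiency direction: the paper takes exactly the route you flag parenthetically as the ``less self-contained alternative.'' It replaces $x$ by a point of $X^{\rm ss}(\ell)$ whose $G$-orbit has minimal dimension, hence is closed, so that the stabilizer is reductive and contains a nontrivial torus; after conjugating that torus into $T$ one gets a nontrivial $\lambda\in N$ fixing $x$, and semistability forces $\langle\lambda,\ell\rangle=0$, whence the $T$-weights of $x$ lie in $\bR\ell$. Your case split (torus stabilizer vs.\ unipotent stabilizer) with Jacobson--Morozov in the unipotent case certainly works, but it is heavier, and several of your steps --- ``$\mu$ spans $\ell^\perp$,'' ``$X^{\nu=0}$ is again zero,'' the final perturbation $\nu+t\lambda_0$ --- tacitly assume the cocharacter in question already lies in the fixed maximal torus $T$; this needs a conjugation you do not make explicit (replacing $x$ by $gx$ throughout), though it is routine. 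The paper's approach buys brevity by sidestepping all of this bookkeeping; yours buys directness by never abandoning the original point $x$.

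One small caveat in your necessity argument: the claim ``every cocharacter of $G^\circ_\ell$ \ldots\ fixes $x$'' is not literally true once $G^\circ_\ell$ is non-abelian --- for instance, a $1$-PS of $\SL_2$ conjugate but not equal to the diagonal one does not fix $e_1e_2\in\operatorname{Sym}^2 k^2$. Only cocharacters landing in $T_\ell$ fix $x\in X_{\beta_0}$. The conclusion $0\notin\overline{G^\circ_\ell\cdot x}$ is nonetheless correct (a nonzero zero-weight vector for a rank-one reductive group is never in the null-cone), and the paper's own phrasing at this step is comparably terse.
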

\begin{proof}
Let $\beta_1, \ldots, \beta_n$ denote the weights of $X$ written with multiplicity. Then a point $x \in X$ has a decomposition
\[
    x = x_1 + \ldots + x_n,
\]
where for $i = 1, \ldots, n$, $x_i$ is in the eigenspace corresponding to the weight $\beta_i$. 

Assume there is a non-zero weight $\beta$ that is proportional to $\ell$. Let $x$ be a non-zero point whose only non-zero component in the decomposition above is in the eigenspace corresponding to $\beta$. We claim that $x$ is semistable and has infinite stabilizer group. Indeed, for any cocharacter $\lambda \in N$, $\lim_{t \rightarrow 0} \lambda(t) \cdot x$ exists if and only if $\langle \lambda, \beta \rangle \geq 0 $. The latter is equivalent to $\langle \lambda, \ell \rangle \geq 0$ because under \Cref{H:main_setup} the assumption that $\langle \lambda_0,\ell \rangle < 0$ guarantees that $\beta$ is positively proportional to $\ell$. Thus $x$ is semistable by the Hilbert-Mumford criterion. Moreover, $x$ is fixed by any non-trivial cocharacter of the subgroup $G_{\ell}^\circ= \ker(\ell)^\circ$, so it is strictly semistable and it also has infinite $G$-stabilizers.

On the other hand, suppose there is a point in $X^{\rm ss}(\ell)$ with positive dimensional stabilizer. If one chooses $x \in X^{\rm ss}(\ell)$ to be a point whose orbit has minimal dimension, and hence whose stabilizer has maximal dimension, then $G \cdot x \subset X^{\rm ss}(\ell)$ is closed. Such a point has a reductive stabilizer group, which is positive dimensional by hypothesis. By replacing $x$ with $g \cdot x$ for some $g \in G$, we may assume that a maximal torus of $\textrm{Stab} (x)$ is contained in $T$, thus $x$ is fixed by a non-trivial cocharacter $\lambda \in N$.

Now $x$ has a decomposition $x = x_{i_1} + \ldots + x_{i_p}$, where for $j = 1, \ldots, p$, $x_{i_j} \neq 0$ and $\langle \lambda, \beta_{i_j} \rangle =0$. Since $x$ is semistable, this cocharacter must necessarily pair non-negatively with $\ell$ by the Hilbert-Mumford criterion. In fact we must have $ \langle \lambda, \ell \rangle = 0$, otherwise $-\lambda$ would be a destabilizing cocharacter. Since $M$ has rank $2$, the weights $\beta_{i_1}, \ldots, \beta_{i_p}$ must lie in the span of $\ell$.

When $X$ has no weights in the span of $\ell$, if one starts with a cocharacter $\lambda'$ with $\langle \lambda',\ell\rangle = 0$, one can perturb $\lambda'$ slightly to obtain a $\lambda \in N$ with $\langle \lambda, \ell \rangle < 0$ and $X^{\lambda' \geq 0} = X^{\lambda \geq 0}$. The latter lies in the unstable locus by the Hilbert-Mumford criterion, thus $G \cdot X^{\lambda' \geq 0} \subset X^{\rm us} (\ell)$.
\end{proof}

\subsection{Proof of \texorpdfstring{\Cref{T:main_fano}}{Theorem 4.1}}

We begin with the following observation. 

\begin{lem} \label{L:reduction_to_connected}
It suffices to prove \Cref{T:main_fano} for connected groups $G$.
\end{lem}

\begin{proof}
Let $G$ be a split reductive group and $G^\circ$ be the connected component containing the identity. Note that $T \subset G^\circ$, so the windows defined in \Cref{S:windows} for $G$ and $G^\circ$ coincide. We claim that $X^{G\mathrm{-ss}} = X^{G^\circ \mathrm{-ss}}$. Under the isomorphism $X/G^\circ \cong (X \times (G/G^\circ))/G$, $X^{G^\circ\mathrm{-ss}}$ corresponds to the $G$-semistable locus of $X \times (G/G^\circ)$. Therefore, if we consider the $G$-equivariant finite surjective \'etale morphism $f : X \times (G/G^\circ) \to X$, which corresponds to the canonical morphism $X/G^\circ \to X/G$, we must show that $(X \times G/G^\circ)^{G\mathrm{-ss}} = f^{-1}(X^{G\mathrm{-ss}})$. This follows from the Hilbert-Mumford criterion for affine $G$-schemes, because the finiteness of $G/G^\circ$ implies that if $x \in X \times (G/G^\circ)$ and $\lambda$ is a cocharacter in $G$, then $\lim_{t\to 0} \lambda(t) \cdot x$ exists if and only if $\lim_{t \to 0} \lambda(t)\cdot f(x)$ exists. 

We continue to use the notation $f$ for the finite surjective \'etale morphism $X^{\rm ss} / G^\circ \cong (X^{\rm ss} \times (G/G^\circ)) / G \to X^{\rm ss}/G$. We claim that for any collection $\{V_\alpha\}_{\alpha\in\cA}$ of vector bundles on $X^{\rm ss}/G^\circ$ that split-generate $\DCoh(X^{\rm ss} /G^\circ)$, the collection $\{f_\ast(V_\alpha)\}_{\alpha \in \cA}$ split-generates $\DCoh(X^{\rm ss}/G)$. Because $\Dqc(X^{\rm ss}/G^\circ)$ is compactly generated by $\DCoh(X^{\rm ss}/G^\circ)$, the fact that $\{V_\alpha\}_{\alpha \in \cA}$ split generates $\DCoh(X^{\rm ss}/G^\circ)$ is equivalent to the fact that for any $\cF \in \Dqc(X^{\rm ss} /G^\circ)$, $\Hom_{X^{\rm ss}/ G^\circ} (V_\alpha,\cF) = 0$ for all $\alpha \in \cA$ implies $\cF = 0$. The same holds for split generation of $\DCoh(X^{\rm ss}/G)$. 

Now let $\cH \in \Dqc (X^{\rm ss}/G)$ be an object satisfying $\Hom_{X^{\rm ss}/G}(f_\ast V_\alpha, \cH) = 0$ for all $\alpha \in \cA$. Then
\begin{align*}
    0 &= \Hom_{X^{\rm ss}/G}(f_\ast V_\alpha, \cH) \\ & \cong \Hom_{X^{\rm ss}/ G^\circ}(V_\alpha, f^! \cH) \\ & \cong \Hom_{X^{\rm ss}/ G^\circ}(V_\alpha, f^\ast \sheafhom (f_\ast \cO_{X^{\rm ss} / G^\circ}, \cH)),
\end{align*}
where the second isomorphism follows from the description of $f^!$ for finite flat morphisms. This implies $f^\ast \sheafhom(f_\ast \cO_{X^{\rm ss} / G^\circ}, \cH) \cong 0$. Because $f$ is surjective and \'{e}tale, we have $\sheafhom(f_\ast \cO_{X^{\rm ss} / G^\circ}, \cH) = 0$. Moreover, $f$ is finite and \'{e}tale, so $f_\ast \cO_{X^{\rm ss} / G^\circ}$ is locally free, and this implies $\cH = 0$. It follows that $\{f_\ast(V_\alpha)\}_{\alpha \in \cA}$ split generates $\DCoh(X^{\rm ss}/G)$.

If $\DCoh(X^{\rm ss}/ G^\circ)$ is split-generated by vector bundles of the form $\cO_{X^{\rm ss}} \otimes U$, where $U$ is an irreducible $G^\circ$-representation whose weights lie in $\theta + \nabla$, then we have seen that the collection $f_\ast (\cO_{X^{\rm ss}} \otimes U)$ will split generate $\DCoh(X^{\rm ss}/ G)$. We compute
\[
    f_\ast \left( \cO_{X^{\rm ss}} \otimes U \right) \cong \cO_{X^{\rm ss}} \otimes U \otimes k[G / G^\circ].  
\]
Because $T$ acts trivially on $k[G/ G^\circ]$, the weights of the $G$-representation $U \otimes k[G / G^\circ]$ lie in $\theta + \nabla$ if and only if the weights of $U$ lie in $\theta + \nabla$.
\end{proof}

\begin{proof}[Proof of \Cref{T:main_fano}]

That $X^{\rm ss}(\omega^\ast)$ has finite $G$-stabilizers if and only if $X$ has no weights proportional to $\omega^\ast$ follows from \Cref{L:unstable_locus} and the fact that \Cref{H:main_setup} implies zero is not a weight of $X$.

Recall that for $\chi$ a dominant character, $V(\chi)$ denotes the irreducible representation of $G$ with highest weight $\chi$. The restriction functor $\DCoh(X/G) \rightarrow \DCoh(X^{\rm ss}/ G)$ is essentially surjective because $X^{\rm ss}/G \subset X/G$ is an open substack. Moreover, $X$ is affine, so $\DCoh(X/G)$ is split-generated by vector bundles of the form $\cO_X \otimes U$, where $U$ is an irreducible $G$-representation.  Thus, because all irreducible representations of a connected $G$ are highest weight representations $V(\chi)$, all vector bundles of the form $\cO_{X^{\rm ss}} \otimes V(\chi)$ with $\chi$ dominant split-generate $\DCoh(X^{\rm ss}/ G)$.

As observed in the proof of \Cref{P:vanishing}, the weights of $V(\chi)$ are contained in the convex hull of $W \cdot \chi$. Because $\theta + \nabla$ is $W$-invariant and convex, the weights of $V(\chi)$ lies in $\theta + \nabla$ if and only if $\chi \in \theta +\nabla$. Therefore by \Cref{L:reduction_to_connected}, it suffices to show that for $G$ connected and for any $\chi \in M^+$, the vector bundle $\cO_X \otimes V(\chi)$ lies in the smallest triangulated subcategory of $\DCoh(X/G)$ that contains i) complexes that are set-theoretically supported on the unstable locus, and ii) the vector bundles $\cO_X \otimes V(\mu)$ for $\mu\in M^{+}\cap(\theta+\nabla)$.

For connected $G$, it will be convenient to introduce the following notation, for any subset $S \subset M_\bR$:
\[
\cC(S) := \left\{ \begin{array}{c} \text{smallest full triangulated subcategory of $\DCoh(X/G)$} \\ \text{ containing all unstably supported complexes and the} \\ \text{locally free sheaves } \cO_X \otimes V(\chi) \text{ for all } \chi \in S \cap M^+ \end{array} \right\}.
\]
Thus we must show $\cC(\theta+\nabla) = \cC(M_\bR)$. We do this in three steps. 
\begin{enumerate}
    \item \textit{Reduction to the $\lambda_0$-strip}: $\cC(M_\bR) = \cC(\theta + B_{\lambda_0})$.
    \item \textit{Reduction to the cylinder window}: $\cC(\theta + B_{\lambda_0}) = \cC(\theta+\overline{\nabla})$.
    \item \textit{Reduction to the barrel window}: $\cC(\theta+\overline{\nabla}) = \cC(\theta+\nabla)$.
\end{enumerate}
The $\lambda_0$-strip, the cylinder window, and the barrel window are as defined in \Cref{S:windows}. Because $G$ has rank $2$ and $\omega^\ast \neq 0$, the subset $(\omega^\ast)^\perp \subset N_\bR$ is one-dimensional. In this case, the barrel window has a more concrete description as follows. 

Let $\lambda'$ be any cocharacter that spans $(\omega^\ast)^\perp$. The rank $2$ hypothesis implies $\eta_{\lambda'} = \eta_{-\lambda'}$. Thus the barrel window $\theta + \nabla$ consists of those $\chi \in M_\bR$ such that $\chi -\theta \in B_{\lambda_0}$ and satisfying either
\begin{itemize}
    \item[(i)] $\lvert \left\langle \lambda',\chi - \theta \right\rangle \rvert < \frac{1}{2} \eta_{\lambda'}$, or
    \item[(ii)] $\left\langle \lambda',\chi - \theta \right\rangle = \frac{1}{2} \eta_{\lambda'}$  and 
    \[ \frac{1}{2} \langle \lambda_0, \det X^{-\lambda' \le 0}  \rangle \le \langle \lambda_0,\chi - \theta \rangle \leq \frac{1}{2} \langle \lambda_0, - \det X^{\lambda' \le 0} \rangle, \text{ or }\]
    \item[(iii)] $\left\langle \lambda',\chi-\theta \right\rangle = - \frac{1}{2} \eta_{\lambda'}$ and  
    \[ \frac{1}{2} \langle \lambda_0, \det X^{\lambda' \le 0}  \rangle \le \langle \lambda_0,\chi -\theta  \rangle \le  \frac{1}{2}  \langle \lambda_0, - \det X^{-\lambda' \le 0} \rangle . \]
\end{itemize}
Note that the terms involving $\mathfrak{g}^{\lambda' < 0}$ do not appear in the inequalities in (ii) and (iii) because $\lambda_0$ is a central cocharacter.

The aforementioned reduction steps will be carried out inductively using the following quantities attached to each $\chi \in M^+$:  
\begin{equation} \label{eq:alpha_chi}
\alpha_\chi  := 2\lvert \langle \lambda_0, \chi-\theta \rangle \rvert / \eta_{\lambda_0} , \textrm{ and }
\end{equation}
\begin{equation}\label{eq:r_chi}
    r_\chi :=  \frac{ \lvert \left\langle \lambda', \chi -\theta \right\rangle \rvert - \left\langle \lambda', \rho \right\rangle  }{\left\langle \lambda',  - \det X^{\lambda' \le 0} \right\rangle}  ,
\end{equation}
where $\rho$ is one half of positive roots and $\lambda' \in (\omega^\ast)^\perp$ is an anti-dominant cocharacter satisfying $\langle \lambda', \chi - \theta \rangle \le  0$. The latter is possible because if the Weyl group is non-trivial, there is a unique anti-dominant $\lambda'$ up to positive scaling that must pair non-negatively with any dominant character, and if the Weyl group is trivial, then both $\lambda'$ and $-\lambda'$ are anti-dominant. We also observe that $r_\chi$ is well-defined and non-negative. Indeed, $\langle \lambda', - \det X^{\lambda' \le 0} \rangle > 0$ because the weights of $X$ span $M_\bR$ and $\langle \lambda', \rho \rangle \le 0 $ because $\lambda'$ is anti-dominant.

\begin{rem}
The quantity $r_\chi$ is a modification of the induction parameter used in the combinatorial generation algorithms in \cites{SVdB17,HLS20,SVdB21} that is suitable for our particular window. 
\end{rem}

\begin{lem}[Reduction to the $\lambda_0$-strip] \label{L:reduction_to_lambda_0}
$\cC(M_\bR) = \cC(\theta + B_{\lambda_0})$.
\end{lem}

\begin{proof}[Proof of \Cref{L:reduction_to_lambda_0}]
\begin{figure}[ht]
\begin{tikzpicture}[scale=.65]

\filldraw[gray!15!white] (-17/2,3/2) -- (-17/2, 17/2) -- (-3/2,17/2) -- (17/2, -3/2) -- (17/2,-17/2) -- (3/2,-17/2) --cycle;
\fill[fill=green,fill opacity=0.06] (-17/2,11/2) -- (-17/2, 17/2) -- (-11/2,17/2) -- (17/2, -11/2) -- (17/2,-17/2) -- (11/2,-17/2) --cycle;

\draw[<->, thin, color=gray] (-8.5,0) -- (8.5,0);
\draw[<->, thin, color=gray] (0,-8.5) -- (0,8.5);
\draw[step=.5, ultra thin, color=gray!50!white] (-8.5,-8.5) grid (8.5,8.5);

\draw[-, thick, green!60!black] (-11/2,17/2) -- (17/2,-11/2);
\draw[-, thick, green!60!black] (11/2,-17/2) -- (-17/2,11/2);
\draw[dotted, very thick, blue] (-17/2,-16/2) -- (16/2,17/2);

\draw[->, thick, color=blue] (-7.9/2,9.1/2) -- (-5.5/2,11.5/2);
\draw[->, thick, color=blue] (-9/2,8/2) -- (-11.5/2,5.5/2);
\node[blue] at (-8.5/2,8.5/2) {$\lambda_0$-strip};

\draw[->, thick, color=blue] (-3.8/2,6.2/2) -- (2/2,12/2);
\draw[->, thick, color=blue] (-5.2/2,4.8/2) -- (-12/2,-2/2);
\node[blue] at (-4.5/2,5.5/2) {$(7/3) B_{\lambda_0}$};

\node[blue] at (13.5/2, 7/2) {$M_\bR^+$};

\fill[fill=gray,fill opacity=0.1] (-17/2,-17/2) -- (-17/2 , 17/2) -- (17/2, 17/2);

\draw (0/2,-14/2) pic[rotate=0,blue] {cross=3pt};
\node[blue] at (-0.5/2, -14.9/2) {$\chi_2$};

\draw (3/2,-14/2) pic[rotate=0,red!90!black] {cross=3pt};
\draw (2/2,-13/2) pic[rotate=0,red!90!black] {cross=3pt};
\draw (1/2,-12/2) pic[rotate=0,red!90!black] {cross=3pt};
\draw (0/2,-11/2) pic[rotate=0,red!90!black] {cross=3pt};
\draw (5/2,-13/2) pic[rotate=0,red!90!black] {cross=3pt};
\draw (4/2,-12/2) pic[rotate=0,red!90!black] {cross=3pt};
\draw (3/2,-11/2) pic[rotate=0,red!90!black] {cross=3pt};
\draw (2/2,-10/2) pic[rotate=0,red!90!black] {cross=3pt};
\draw (1/2,-9/2) pic[rotate=0,red!90!black] {cross=3pt};
\draw (6/2,-11/2) pic[rotate=0,red!90!black] {cross=3pt};
\draw (5/2,-10/2) pic[rotate=0,red!90!black] {cross=3pt};
\draw (4/2,-9/2) pic[rotate=0,red!90!black] {cross=3pt};
\draw (3/2,-8/2) pic[rotate=0,red!90!black] {cross=3pt};
\draw (6/2,-8/2) pic[rotate=0,red!90!black] {cross=3pt};
\fill[fill=red,fill opacity=0.1] (0/2,-14/2) -- (0/2 , -11/2) -- (1/2,-9/2) -- (3/2,-8/2) -- (6/2,-8/2) -- (6/2,-11/2) -- (5/2,-13/2) -- (3/2, -14/2) -- cycle;

\draw (9/2,5/2) pic[rotate=0,blue] {cross=3pt};
\node[blue] at (9.6/2, 5.8/2) {$\chi_1$};

\draw (6/2,5/2) pic[rotate=0,red!90!black] {cross=3pt};
\draw (7/2,4/2) pic[rotate=0,red!90!black] {cross=3pt};
\draw (8/2,3/2) pic[rotate=0,red!90!black] {cross=3pt};
\draw (9/2,2/2) pic[rotate=0,red!90!black] {cross=3pt};
\draw (4/2,4/2) pic[rotate=0,red!90!black] {cross=3pt};
\draw (5/2,3/2) pic[rotate=0,red!90!black] {cross=3pt};
\draw (6/2,2/2) pic[rotate=0,red!90!black] {cross=3pt};
\draw (7/2,1/2) pic[rotate=0,red!90!black] {cross=3pt};
\draw (8/2,0/2) pic[rotate=0,red!90!black] {cross=3pt};
\draw (3/2,2/2) pic[rotate=0,red!90!black] {cross=3pt};
\draw (4/2,1/2) pic[rotate=0,red!90!black] {cross=3pt};
\draw (5/2,-0/2) pic[rotate=0,red!90!black] {cross=3pt};
\draw (6/2,-1/2) pic[rotate=0,red!90!black] {cross=3pt};
\draw (3/2,-1/2) pic[rotate=0,red!90!black] {cross=3pt};
\fill[fill=red,fill opacity=0.1] (9/2,5/2) -- (6/2 , 5/2) -- (4/2,4/2) -- (3/2,2/2) -- (3/2,-1/2) -- (6/2,-1/2) -- (8/2,-0/2) -- (9/2, 2/2) -- cycle;

\draw (-7/2,-7/2) pic[rotate=0,blue] {cross=3pt};
\node[blue] at (-7.1/2, -8.1/2) {$\chi_3$};

\draw (-7/2,-4/2) pic[rotate=0, black] {cross=3pt};
\draw[->,thin, color=blue] (-6.7/2,-3.9/2) .. controls (-5.2/2,-4.2/2) .. (-4.8/2,-5.8/2);
\draw[thick, black] (-6/2,-5/2) circle (4 pt);
\draw (-5/2,-6/2) pic[rotate=0, red!90!black] {cross=3pt};
\draw (-4/2,-7/2) pic[rotate=0, red!90!black] {cross=3pt};
\draw (-6/2,-2/2) pic[rotate=0, black] {cross=3pt};
\draw[->,thin, color=blue] (-5.7/2,-1.9/2) .. controls (-3.2/2,-2.2/2) .. (-2.8/2,-4.8/2);
\draw (-5/2,-3/2) pic[rotate=0, black] {cross=3pt};
\draw[->,thin, color=blue] (-4.7/2,-2.9/2) .. controls (-4/2,-3/2) .. (-3.8/2,-3.8/2);
\draw (-4/2,-4/2) pic[rotate=0, red!90!black] {cross=3pt};
\draw (-3/2,-5/2) pic[rotate=0, red!90!black] {cross=3pt};
\draw (-2/2,-6/2) pic[rotate=0, red!90!black] {cross=3pt};
\draw (-4/2,-1/2) pic[rotate=0, black] {cross=3pt};
\draw[->,thin, color=blue] (-3.7/2,-0.9/2) .. controls (-2.2/2,-1.2/2) .. (-1.8/2,-2.8/2);
\draw[thick, black] (-3/2,-2/2) circle (4 pt);
\draw (-2/2,-3/2) pic[rotate=0, red!90!black] {cross=3pt};
\draw (-1/2,-4/2) pic[rotate=0, red!90!black] {cross=3pt};
\draw (-1/2,-1/2) pic[rotate=0, red!90!black] {cross=3pt};
\fill[fill=red,fill opacity=0.1] (-7/2,-7/2) --  (-1/2 , -1/2) -- (-1/2,-4/2) -- (-2/2,-6/2) -- (-4/2,-7/2) -- cycle;
\fill[fill=red,fill opacity=0.2] (-7/2,-7/2) -- (-5/2 , -6/2) -- (-3/2, -5/2);

\end{tikzpicture}
\caption{\footnotesize This illustrates the reduction to the $\lambda_0$-strip for the $\GL_2$ representation $X = \Sym^3 \bC^2$ with $\lambda_0 = (-1,-1)$ and $\theta = 0$. The roots of $G$ are $(1,-1),(-1,1)$, where we take $(1,-1)$ to be the positive root. The weights of $X$ are $(0,3),(1,2),(2,1),(3,0)$, so $\eta_{\lambda_0} =12$. Here we are using the standard identification $M_\bR \cong N_\bR \cong \bR^2$. The $T$-weights indicated by blue crosses satisfy $\langle \lambda_0, \chi \rangle = \pm \alpha_\chi \eta_{\lambda_0}/2$, where $\alpha_\chi = 7/3$. In all three cases, the red crosses indicate the weights $\mu^+$ appearing in the respective complexes as described in \Cref{L:reduction_to_lambda_0}. For $\chi_1$, the complex is $C_{\lambda_0, \chi_1}$ and all $\mu$ are dominant, so $\mu^+ = \mu$. For $\chi_3$, the complex is $C_{\lambda_0, \chi_3 + \det X}$ and some of the $\mu$ are not dominant. These weights are indicated by the black crosses when $\mu^+$ exists and by the black circles otherwise. The $\rho$-shifted Weyl group action $(-)^+$ is reflection along the dotted blue line as indicated by the arrows. One sees that all the resulting $\mu^+$ lie in a strictly smaller $\lambda_0$-strip.} \label{fig:lambda_0_reduction}
\end{figure}
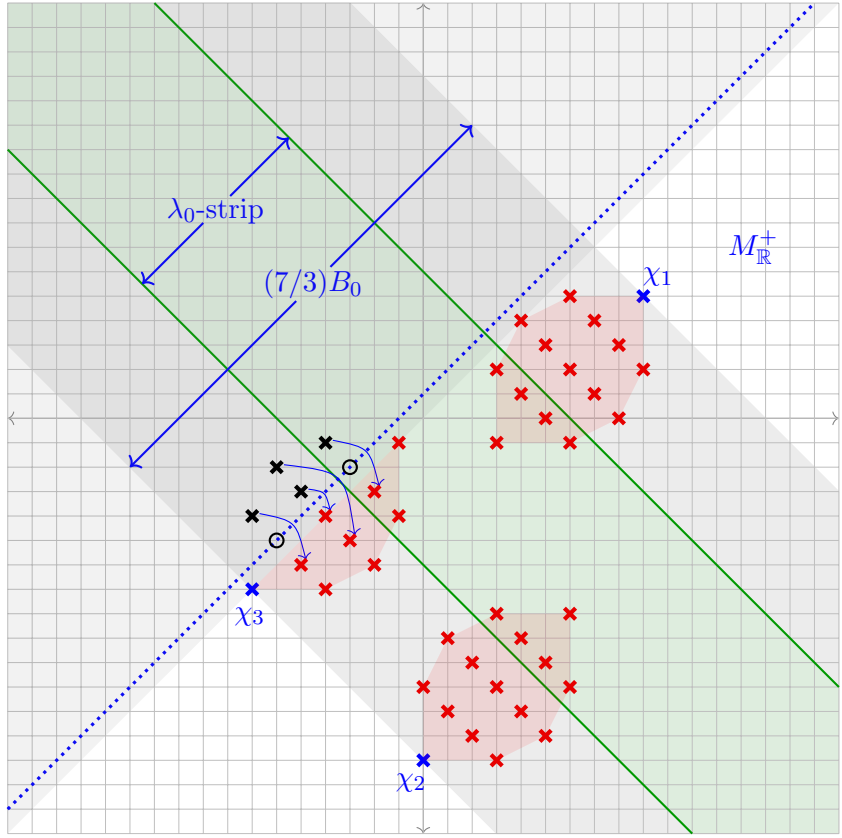

Let $\chi \notin \theta + B_{\lambda_0}$ be a dominant $T$-weight. We wish to show that the tautological vector bundle $\cO_X \otimes V(\chi)$ is in $\cC (\theta + B_{\lambda_0})$. We proceed by induction on the quantity $\alpha_\chi$ in \eqref{eq:alpha_chi}. Note that $\chi\notin\theta+B_{\lambda_0}$ if and only if $\alpha_{\chi}>1$.

If $\left\langle \lambda_{0}, \chi - \theta \right\rangle = \alpha_{\chi}\eta_{\lambda_0}/ 2$, consider the $G$-equivariant complex $C_{\lambda_0, \chi}$. The assumption that $\lambda_0$ pairs strictly negatively with all weights of $X$ (\Cref{H:main_setup}) implies that the subset $X^{\lambda_0\geq0}$ coincides with the stratum of $X^{\rm us}(\omega^\ast)$ consisting only of the origin, so $C_{\lambda_0, \chi}$ is unstably supported. \Cref{L:complex_1} says that  $C_{\lambda_0, \chi}$ has one term of the form $\cO_X \otimes V(\chi)$ and other terms that are direct sums of vector bundles of the form $\cO_X \otimes V(\mu^+)$, where $\mu = \chi -\beta_{i_1}-\cdots-\beta_{i_p}$ with $p>0$ and $i_1, \ldots, i_p$ distinct. Because $\alpha_\chi > 1$, one can check that the weights $\mu^+$ lie in the interior of the strip $\theta + \alpha_\chi B_{\lambda_0}$. 

Otherwise, $\left\langle \lambda_{0}, \chi - \theta \right\rangle = - \alpha_{\chi}\eta_{\lambda_0}/ 2$ and one takes the complex $C_{\lambda_0, \chi + \det X}$. It is an unstably supported complex that, by \Cref{L:complex_1}, relates $\cO_X\otimes V(\chi)$ with sheaves of the form $\cO_X \otimes V(\mu^+)$ for $\mu = \chi + \beta_{i_1} + \cdots + \beta_{i_p}$ with $p>0$ and $i_1, \ldots, i_p$ distinct. As in the previous case, these weights lie in the interior of the strip $\theta + \alpha_\chi B_{\lambda_0}$.

Because the set of possible $\alpha_\chi$ that arise above is discrete, we can apply induction to conclude that $\cO_X \otimes V(\chi) \in \cC(\theta + B_{\lambda_0})$ for any $\chi \in M^+$, hence $\cC(M_\bR) = \cC(\theta + B_{\lambda_0})$. \Cref{fig:lambda_0_reduction} illustrates an example of this reduction. 
\end{proof}

\begin{lem}[Reduction to the cylinder window] \label{L:reduction_to_cylinder}
$\cC(\theta + B_{\lambda_0}) = \cC(\theta+\overline{\nabla})$.
\end{lem}
\begin{proof}[Proof of \Cref{L:reduction_to_cylinder}]
Let $\chi$ be a dominant $T$-weight in $\theta + ( B_{\lambda_0} \setminus \overline{\nabla})$. We would like to show that the tautological vector bundle $\cO_X \otimes V(\chi)$ is in $\cC(\theta + \overline{\nabla})$. We will make use of the quantity $r_\chi$ in \eqref{eq:r_chi}, thus we choose $\lambda' \in (\omega^\ast)^\perp$ to be anti-dominant such that $\langle \lambda', \chi - \theta \rangle \le  0$.

Note that the inequality $r_\chi \leq 1/2$ is equivalent to
\[
    \lvert \langle \lambda', \chi  -\theta \rangle \rvert \le  \langle \lambda', -\det X^{\lambda' \le 0} \rangle / 2  + \langle \lambda', \rho \rangle = \eta_{\lambda'}/ 2 ,
\]
where the equality used the identification $2\rho = \det \mathfrak{g}^{\lambda' < 0} $, which holds because $\lambda' $ is anti-dominant. Thus for $\chi \in \theta+B_{\lambda_0}$, $\chi \in \theta + \overline{\nabla}$ if and only if $r_\chi \leq 1/2$.

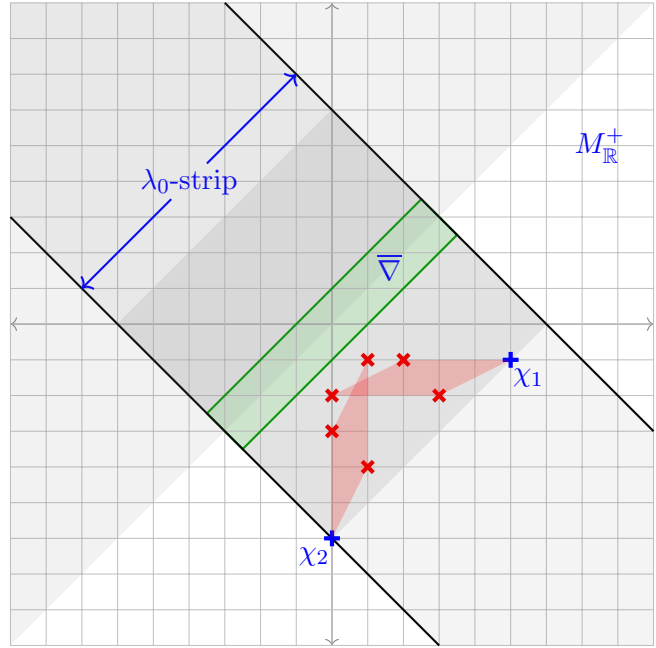
\begin{figure}[ht]
\begin{tikzpicture}[scale=.95]

\filldraw[gray!15!white] (-6/2,-0/2) --  (0/2,6/2) -- (6/2,0/2) -- (-0/2,-6/2) -- cycle;
\fill[fill=green,fill opacity=0.1]  (-3.5/2,-2.5/2) --  (2.5/2,3.5/2) -- (3.5/2,2.5/2) -- (-2.5/2,-3.5/2) -- cycle;

\draw[<->, thin, color=gray] (-4.5,0) -- (4.5,0);
\draw[<->, thin, color=gray] (0,-4.5) -- (0,4.5);
\draw[step=.5, ultra thin, color=gray!50!white] (-4.5,-4.5) grid (4.5,4.5);

\draw[-, thick, green!60!black] (-3.5/2,-2.5/2) --  (2.5/2,3.5/2) -- (3.5/2,2.5/2) -- (-2.5/2,-3.5/2) -- cycle;
\node[blue] at (1.6/2,1.6/2) {$\overline{\nabla}$};
\draw[-, thick, black] (-9/2,3/2) --  (3/2,-9/2);
\draw[-, thick, black] (-3/2,9/2) --  (9/2,-3/2);

\fill[fill=gray,fill opacity=0.09] (-9/2,3/2) -- (-9/2, 9/2) -- (-3/2,9/2) -- (9/2, -3/2) -- (9/2, -9/2) -- (3/2,-9/2) -- cycle;
\draw[->, thick, color=blue] (-3.5/2,4.5/2) -- (-1/2,7/2);
\draw[->, thick, color=blue] (-4.5/2,3.5/2) -- (-7/2,1/2);
\node[blue] at (-4.0/2,4.0/2) {$\lambda_0$-strip};

\node[blue] at (7.5/2, 5/2) {$M_\bR^+$};

\fill[fill=gray,fill opacity=0.1] (-9/2,9/2) -- (9/2 , 9/2) -- (-9/2, -9/2);

\draw (5/2,-1/2) pic[rotate=45,blue] {cross=3pt};
\node[blue] at (5.5/2,-1.5/2) {$\chi_1$};
\draw (2/2,-1/2) pic[rotate=0,red!90!black] {cross=3pt};
\draw (3/2,-2/2) pic[rotate=0,red!90!black] {cross=3pt};
\draw (0/2,-2/2) pic[rotate=0,red!90!black] {cross=3pt};
\fill[fill=red,fill opacity=0.2] (5/2,-1/2) --  (2/2, -1/2) -- (0/2,-2/2) -- (3/2,-2/2) -- cycle;

\draw (0/2,-6/2) pic[rotate=45,blue] {cross=3pt};
\node[blue] at (-0.5/2,-6.5/2) {$\chi_2$};
\draw (-0/2,-3/2) pic[rotate=0,red!90!black] {cross=3pt};
\draw (1/2,-4/2) pic[rotate=0,red!90!black] {cross=3pt};
\draw (1/2,-1/2) pic[rotate=0,red!90!black] {cross=3pt};
\fill[fill=red,fill opacity=0.2] (0/2,-6/2) --  (0/2, -3/2) -- (1/2,-1/2) -- (1/2,-4/2) -- cycle;
\end{tikzpicture}
\caption{\footnotesize  This diagram illustrates the reduction to the cylinder window $\overline{\nabla}$ for the $\GL_2$ representation $X = \Sym^3 \bC^2$ with $\lambda_0 = (-1,-1)$, $\lambda' = (-1, 1)$, and $\theta = 0$. As in \Cref{fig:lambda_0_reduction}, $\eta_{\lambda_0} = 12$ and $\eta_{\lambda'} = 2$. In this example, $Q_{\lambda'} = 0$. The character $\chi_1$ falls into Case 2a and one selects the complex $C_{\lambda', \chi_1}$. On the other hand, $\chi_2$ falls into Case 2b and one selects the complex $D^\vee_{\lambda', \chi_2}$. In both cases, the resulting weights $\mu^+$ are indicated by the red crosses.} \label{fig:cylinder_reduction}
\end{figure}

The complexes $D_{\lambda', \chi}^\vee$ or $C_{\lambda', \chi}$ from \Cref{P:minimal_complex} are both unstably supported by \Cref{L:unstable_locus}. Thus the key idea is to choose either $D_{\lambda', \chi}^\vee$ or $C_{\lambda', \chi}$ in such a way that the corresponding weights $\mu^+$ from \Cref{L:complex_1} and \Cref{L:complex_2} still lie in the $\lambda_0$-strip and have $r_{\mu^+} < r_\chi$.

The number
\begin{equation}\label{eq:Q_lambda}
    Q_{\lambda'} := \frac{1}{2} \eta_{\lambda_0} + \langle \lambda_0, \det X^{\lambda'<0} \rangle .
\end{equation} 
will allow us to decide the appropriate complex. Note that $Q_{\lambda'}=0$ when the Weyl group is non-trivial. There are two cases to consider.

\medskip
\noindent \textit{2a. The case $- \eta_{\lambda_0}/ 2 \le \langle \lambda_0, \chi -\theta \rangle \le Q_{\lambda'}$}:
\medskip

Take the complex $C_{\lambda', \chi}$. \Cref{L:complex_1} says that $C_{\lambda', \chi}$ has one term of the form $\cO_X \otimes V(\chi)$ and other terms have a filtration whose associated graded pieces are of the form $\cO_X \otimes V(\mu^+)$, where $\mu = \chi - \beta_{i_1}-\beta_{i_2}-\cdots-\beta_{i_p}$ with $p > 0$, $i_1,\ldots,i_p$ distinct, and $\langle\lambda',\beta_{i_j} \rangle < 0$ for all $j =1, \ldots, p$. 

\begin{claim}\label{Claim:r_mu_less_1}
For $\mu$ as in case 2a, and assuming $r_\chi \geq 1/2$, if $\mu^+$ exists it satisfies the following. 
\begin{enumerate}
    \item $\mu^+$ is in the $\lambda_0$-strip $\theta + B_{\lambda_0}$, and
    \item $r_{\mu^+}  \le r_{\chi}$ with strict inequality if $r_\chi > 1/2$. 
\end{enumerate}
\end{claim}

\medskip
\noindent \textit{2b. The case $Q_{\lambda'} < \langle \lambda_0, \chi -\theta \rangle \le  \eta_{\lambda_0}/ 2$}:
\medskip

Take the complex $D_{\lambda', \chi}^\vee$. \Cref{L:complex_2} says that $D_{\lambda', \chi}^\vee$ has one term of the form $\cO_X \otimes V(\chi)$ and other terms that have a filtration whose associated graded pieces are of the form $\cO_X \otimes V(\mu^+)$, where $\mu = \chi + \beta_{i_1}+\beta_{i_2}+\cdots+\beta_{i_p}$ with $p > 0$, $i_1,\ldots,i_p$ distinct, and $\langle\lambda',\beta_{i_j} \rangle > 0$ for all $j =1, \ldots, p$.  

\begin{claim}\label{Claim:r_mu_less_2}
For $\mu$ as in case 2b, and assuming $r_\chi \geq 1/2$, if $\mu^+$ exists it satisfies the following.
\begin{enumerate}
    \item $\mu^+$ is in the $\lambda_0$-strip $\theta + B_{\lambda_0}$, and
    \item $r_{\mu^+}  \le r_{\chi}$ with strict inequality if $r_\chi > 1/2$. 
\end{enumerate}
\end{claim}

With these choices of complexes, \Cref{Claim:r_mu_less_1} and \Cref{Claim:r_mu_less_2} together with the fact that the set of $r_\chi$ that arise is discrete imply by induction that $\cO_X \otimes V(\chi) \in \cC(\theta + \overline{\nabla})$ for any $\chi \in M^+ \cap (\theta + B_{\lambda_0})$. Hence $\cC(\theta + B_{\lambda_0}) = \cC(\theta + \overline{\nabla})$. \Cref{fig:cylinder_reduction} illustrates an example of this reduction.

We now verify \Cref{Claim:r_mu_less_1} and \Cref{Claim:r_mu_less_2}.

\begin{proof}[Proof of \Cref{Claim:r_mu_less_1}]
For the first claim, note that $\langle \lambda_0, \mu^+ \rangle = \langle \lambda_0, \mu \rangle$ because $\lambda_0$ is central, thus it suffices to show $\mu$ is in the $\lambda_0$-strip. From the description of $\mu$, we have the inequalities
\begin{equation} \label{eq:lambda_0_bounds}
     \langle \lambda_0, \chi - \theta \rangle < \langle \lambda_0, \mu - \theta \rangle \le \langle \lambda_0, \chi - \theta \rangle + \langle \lambda_0, - \det X^{\lambda' < 0} \rangle, 
\end{equation}
which together with the assumed bounds $- \eta_{\lambda_0}/ 2 \le \langle \lambda_0, \chi - \theta \rangle \le Q_{\lambda'}$ imply that $\mu \in \theta + B_{\lambda_0}$. The strict inequality on the left holds because in the description of $\mu$ above, $p > 0$ and all weights of $X$ are assumed to pair strictly negatively with $\lambda_0$.

For the second claim, we observe from the description of $\mu$ that
\[
\langle \lambda', \chi-\theta \rangle \le \langle \lambda', \mu-\theta \rangle \leq \langle \lambda', \chi-\theta \rangle - \langle \lambda', \det(X^{\lambda'<0}) \rangle.
\]
Using the fact that $\langle \lambda', \chi - \theta \rangle \le 0$, we have $\langle \lambda', \chi -\theta \rangle = - r_\chi \langle \lambda', -\det X^{\lambda' \le 0}\rangle  - \langle \lambda', \rho \rangle$, so we can rewrite the bounds on $\langle \lambda', \mu - \theta \rangle$ as
\begin{align}
    - r_\chi \langle \lambda', -\det X^{\lambda' \le 0}\rangle  - \langle \lambda', \rho \rangle & < \langle \lambda', \mu - \theta \rangle \nonumber\\ 
    & \le \left(1 - r_\chi\right) \langle \lambda', - \det X^{\lambda' \le 0} \rangle  - \langle \lambda', \rho \rangle \vspace{.5em} \nonumber \\ 
    & \le  r_\chi \langle \lambda', - \det X^{\lambda' \le 0} \rangle - \langle \lambda', \rho \rangle \label{eq:mu_bound} 
\end{align}
For the last inequality, we have used the assumption $r_\chi \ge 1/2$, and this inequality is strict if $r_\chi > 1/2$.

If $\rho = 0$, then $\mu^+ = \mu$ and it is immediate from \eqref{eq:mu_bound} that $r_{\mu^+} \le r_\chi$ with strict inequality if $r_\chi > 1/2$. 

If $\rho \neq 0$, then there is a single non-trivial element of the Weyl group $w_0$, namely the reflection along $\Span (\omega^\ast)$. Thus $\theta$ must be in the span of $\omega^\ast$. In this case, if $\mu^+ = \mu$, then $\mu$ is dominant and because $\lambda'$ is anti-dominant, we must have $\langle \lambda', \mu - \theta \rangle \le 0$. Applying this to \eqref{eq:mu_bound}, we have 
\[
    \lvert \langle \lambda', \mu - \theta \rangle \rvert  <  r_\chi \langle \lambda', - \det X^{\lambda' \le 0} \rangle  + \langle \lambda', \rho \rangle ,
\]
and it implies $r_{\mu^+} < r_\chi$. Otherwise, $\mu^+ = w_0 (\mu + \rho) - \rho$, and
\[
    \langle \lambda', \mu^+ \rangle = \langle \lambda', - \mu - 2\rho \rangle .
\]
Applying this to \eqref{eq:mu_bound}, we have
\begin{equation} 
    - r_\chi \langle \lambda', -\det X^{\lambda' \le 0} \rangle - \langle \lambda', \rho \rangle \le \langle \lambda', \mu^+ + \theta \rangle < r_\chi \langle \lambda', -\det X^{\lambda' \le 0} \rangle - \langle \lambda', \rho \rangle \label{eq:bounds_for_mu_plus_theta}
\end{equation}
with strict inequality on the left if $r_\chi > 1/2$. Since $\theta \in \Span (\omega^\ast)$, the pairing $\langle \lambda', \theta \rangle$ is zero, and since $\lambda'$ is anti-dominant $\langle \lambda', \rho \rangle < 0$. Combining these with $\langle \lambda', \mu^+ \rangle = \langle \lambda', - \mu - 2\rho \rangle$ and the triangle inequality, we obtain the middle inequality in the following 
\[
    r_{\mu^+} := \frac{\lvert \langle \lambda', \mu^+ - \theta \rangle \rvert - \langle \lambda', \rho \rangle}{\langle \lambda', - \det X^{\lambda' \le 0} \rangle }  \leq  \frac{\lvert \langle \lambda', \mu^+ + \theta \rangle + \langle \lambda', \rho \rangle \rvert}{\langle \lambda', - \det X^{\lambda' \le 0} \rangle } \le r_\chi.
\]
The rightmost inequality then follows from \eqref{eq:bounds_for_mu_plus_theta}, thus it is strict if $r_\chi > 1/2$.
\end{proof}

\begin{proof}[Proof of \Cref{Claim:r_mu_less_2}]
For the first claim, it suffices to show $\mu$ is in the $\lambda_0$-strip. The key observation here is
\begin{equation}\label{eq:Q_lambda_lower_bound}
    - \frac{1}{2} \eta_{\lambda_0} - \langle \lambda_0, \det X^{\lambda'>0} \rangle \le Q_{\lambda'} ,
\end{equation}
which follows from $-\eta_{\lambda_0} = \langle \lambda_0, \det X \rangle \le \langle \lambda_0, \det X^{\lambda' > 0} + \det X^{\lambda' < 0} \rangle .$ In fact, the inequality above is an equality by the assumption that there are no weights of $X$ in the span of $\omega^\ast$. Now from the description of $\mu$, we have
\[
    \langle \lambda_0, \chi- \theta \rangle + \langle \lambda_0, \det X^{\lambda' > 0} \rangle \le \langle \lambda_0, \mu - \theta \rangle < \langle \lambda_0, \chi -\theta \rangle,
\]
which together with the assumed bounds for $\langle \lambda_0, \chi - \theta \rangle$ and \eqref{eq:Q_lambda_lower_bound} imply that $\mu \in \theta + B_{\lambda_0}$.

For the second claim, we observe that $ \langle \lambda', \det X^{\lambda' \ge 0} \rangle = \langle \lambda', - \det X^{\lambda' \le 0} \rangle$. With this identification, one obtains the inequalities \eqref{eq:mu_bound} for these $\mu$ and the rest of the proof follows exactly as that of \Cref{Claim:r_mu_less_1}.
\end{proof}
\end{proof}

\begin{lem}[Reduction to the barrel window] \label{L:reduction_to_barrel}
$\cC(\theta+\overline{\nabla}) = \cC(\theta+\nabla)$.
\end{lem}

\begin{proof}[Proof of \Cref{L:reduction_to_barrel}]

Recall the quantity $\alpha_\chi$ in \eqref{eq:alpha_chi} and let us choose an indexing of the finite set $M^+ \cap (\theta + (\overline{\nabla} \setminus \nabla)) = \{\chi_1,\ldots,\chi_m\}$ such that $\alpha_{\chi_i} \geq \alpha_{\chi_j}$ for $i<j$. As in previous reductions, we will produce for each $\chi_i$ an unstably supported complex that relates $\cO_X \otimes V(\chi_i)$ with sheaves that admit a $G$-equivariant filtration with associated graded pieces of the form $\cO_X \otimes V(\mu^+)$ with either $r_{\mu^+} < r_{\chi_i}$ (in which case $\mu^+ \in \theta + \nabla$) or $r_{\mu^+} = 1/2$ and $\alpha_{\mu^+} < \alpha_\chi$ (in which case $\mu^+ = \chi_j$ for some $j>i$). It will then follow by induction that for any $n$,
\[
\cC(\theta+\overline{\nabla}) = \cC((\theta+\nabla) \cup \{\chi_{n+1},\ldots,\chi_m\}).
\]
The case $m=n$ would give $\cC(\theta+\nabla) = \cC(\theta+\overline{\nabla})$.

Let $\chi \in \{ \chi_1, \ldots, \chi_m \}$. By definition of the barrel window, $\lvert \langle \lambda', \chi - \theta \rangle \rvert = \eta_{\lambda'}/2$, thus $r_\chi = 1/2$, where we again take $\lambda' \in (\omega^\ast)^\perp$ to be an anti-dominant cocharacter such that $\langle \lambda', \chi - \theta \rangle \le 0$. By the rank-two description of the barrel window, one of the following cases (3a) or (3b) must hold.

\begin{figure}[ht]
\begin{tikzpicture}[scale=1]

\fill[fill=gray,fill opacity=0.2]  (-3.5,-2.5) --  (2.5,3.5) -- (3.5,2.5) -- (-2.5,-3.5) -- cycle;
\fill[fill=green,fill opacity=0.1]   (-3.3,-2.7) -- (-2,-1) -- (1,2) -- (2.7,3.3) -- (3.3,2.7) -- (2,1) -- (-1,-2) -- (-2.7,-3.3) -- cycle;

\fill[fill=gray,fill opacity=0.1] (-4,-4) -- (-4 , 4) -- (4, 4);

\draw[<->, thin, color=gray] (-4,0) -- (4,0);
\draw[<->, thin, color=gray] (0,-4) -- (0,4);
\draw[step=1, ultra thin, color=gray!50!white] (-4,-4) grid (4,4);
\draw[-, ultra thin, gray] (-3.5,-2.5) --  (2.5,3.5) -- (3.5,2.5) -- (-2.5,-3.5) -- cycle;
\draw[-, thick, green!60!black] (-3.3,-2.7) -- (-2,-1) -- (1,2) -- (2.7,3.3) -- (3.3,2.7) -- (2,1) -- (-1,-2) -- (-2.7,-3.3) -- cycle;
\draw[dotted, very thick, blue] (-4,-3) -- (3,4);

\node[blue] at (2,1) {$\diamond$};
\node[blue] at (1,2) {$\diamond$};
\node[blue] at (-2,-1) {$\diamond$};
\node[blue] at (-1,-2) {$\diamond$};
\node[blue] at (3.4,-1.6) {$M_\bR^+$};

\draw[-, thick, red!90!black] (3,2) -- (1,1);
\draw[-, thick, red!90!black] (0,-1) -- (1,1);
\draw (3,2) pic[rotate=45,blue] {cross=3.5pt};
\node[blue] at (3.3,1.8) {$\chi$};
\draw (1,1) pic[rotate=0,red!90!black] {cross=3.5pt};
\draw (-2,1) pic[rotate=0,black] {cross=3.5pt};
\draw[->,thin, color=blue] (-1.9,1.0) .. controls (-0.5, 0.5) .. (0.0,-0.9);
\draw (0, -1) pic[rotate=0,red!90!black] {cross=3.5pt};
\draw (0,2) pic[rotate=0,black] {cross=3.5pt};
\draw[->,thin, color=blue] (0.1,2.0) .. controls (0.8, 1.8) .. (1,1.1);
\fill[fill=black,fill opacity=0.23] (3,2) -- (1,1) -- (-2,1) -- (0,2) -- cycle;

\end{tikzpicture}
\caption{\footnotesize The diagram illustrates the reduction from the cylinder window to the barrel window for our running $\GL_2$ representation $X = \Sym^3 \bC^2$ from \Cref{fig:lambda_0_reduction} and \Cref{fig:cylinder_reduction}. Here $\lambda' = (-1,1)$ and $Q_{\lambda'} = 0$. We have indicated by the blue diamonds the special weights $\pm \zeta_{\lambda'}/2 = \pm (-2,-1)$ and $\pm \zeta_{-\lambda'}/2 = \pm (-1,-2)$ that are relevant in the passage from the cylinder to the barrel window. The character $\chi$ lies in $M^+ \cap (\overline{\nabla} \setminus \nabla)$ and it falls into Case 3a. Thus, one takes the complex $C_{\lambda', \chi}$. The weights $\mu$ appearing as in \Cref{L:complex_1} are not all dominant and we have indicated these by the black crosses. As in \Cref{fig:lambda_0_reduction}, the $\rho$-shifted action $(-)^+$ for these weight is the reflection along the dotted line as shown by the blue arrows. The resulting $\mu^+$ are indicated by the red crosses. }
\end{figure}
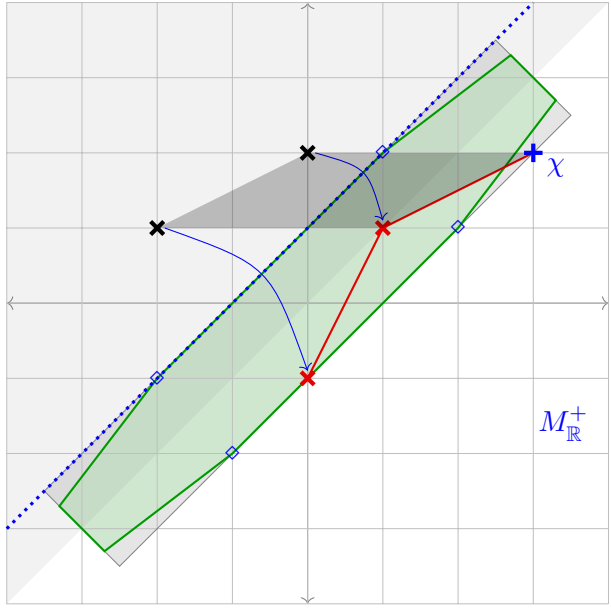

\medskip
\noindent \textit{3a. The case $- \eta_{\lambda_0}/ 2 \le \langle \lambda_0, \chi -\theta \rangle < \langle \lambda_0,  \det X^{\lambda' \leq 0} \rangle /2 $}:
\medskip

Take the complex $C_{\lambda', \chi}$. It is unstably supported by \Cref{L:unstable_locus}. Moreover, \Cref{L:complex_1} says that $C_{\lambda', \chi}$ has one term of the form $\cO_X \otimes V(\chi)$ and other terms that have a filtration whose associated graded pieces are of the form $\cO_X \otimes V(\mu^+)$, where $\mu = \chi - \beta_{i_1}-\beta_{i_2}-\cdots-\beta_{i_p}$ with $p > 0$, $i_1,\ldots,i_p$ distinct, and $\langle\lambda',\beta_{i_j} \rangle < 0$ for all $j =1, \ldots, p$.

Using the fact that $\eta_{\lambda_0} = \langle \lambda_0, - \det X \rangle$ and the definition of $Q_{\lambda'}$ from \eqref{eq:Q_lambda}, we observe that $Q_{\lambda'} > \langle \lambda_0, \det X^{\lambda' \leq 0} \rangle /2$, so that we are in the case (2a) of \Cref{L:reduction_to_cylinder} above. We may then apply \Cref{Claim:r_mu_less_1} to show that $\mu^+$ is still in the $\lambda_0$-strip and $r_{\mu^+} \le 1/2$. Moreover, we can use \eqref{eq:lambda_0_bounds} and the assumed bound $\langle \lambda_0, \chi - \theta \rangle < \langle \lambda_0,  \det X^{\lambda' \leq 0} \rangle /2$ to obtain
\begin{align*}
    \langle \lambda_0, \chi -\theta \rangle < \langle \lambda_0, \mu -\theta \rangle & <   \langle \lambda_0, \det X^{\lambda' \le 0} \rangle / 2 + \langle \lambda_0, -\det X^{\lambda' < 0} \rangle \\
    & = \langle \lambda_0, - \det X^{\lambda' \leq 0} /2 \rangle < -\langle \lambda_0, \chi-\theta \rangle,
\end{align*}
where we have used the assumption that $X^{\lambda'=0}=0$ for the equality. Thus, $\alpha_{\mu^{+}} = \alpha_{\mu} < \alpha_\chi$.

\medskip
\noindent \textit{3b. The case $\langle \lambda_0, - \det X^{\lambda' \geq 0} \rangle / 2 <  \langle \lambda_0, \chi -\theta \rangle \le \eta_{\lambda_0}/ 2$}:
\medskip

Take the complex $D_{\lambda', \chi}^\vee$. It is unstably supported by \Cref{L:unstable_locus}. Moreover, \Cref{L:complex_2} says that $D_{\lambda', \chi}^\vee$ has one term of the form $\cO_X \otimes V(\chi)$ and other terms that have a filtration whose associated graded pieces are of the form $\cO_X \otimes V(\mu^+)$, where $\mu = \chi + \beta_{i_1}+\beta_{i_2}+\cdots+\beta_{i_p}$ with $p > 0$, $i_1,\ldots,i_p$ distinct, and $\langle\lambda',\beta_{i_j} \rangle > 0$ for all $j =1, \ldots, p$.  

Using the fact that $\eta_{\lambda_0} = \langle \lambda_0, - \det X \rangle$ and the definition of $Q_{\lambda'}$, we observe that $Q_{\lambda'} < \langle \lambda_0, - \det X^{\lambda' \geq 0} \rangle / 2$, so that we are in the case (2b) of \Cref{L:reduction_to_cylinder} above. We may then apply \Cref{Claim:r_mu_less_2} to show that $\mu^+$ is still in the $\lambda_0$-strip and $r_{\mu^+} \le 1/2$. Moreover, we can use \eqref{eq:lambda_0_bounds}, the assumed bound $\langle \lambda_0, - \det X^{\lambda' \geq 0} \rangle / 2 < \langle \lambda_0, \chi - \theta \rangle$, and the hypothesis $X^{\lambda'=0}=0$ to obtain
\begin{align*}
    \langle \lambda_0, \chi -\theta \rangle > \langle \lambda_0, \mu -\theta \rangle & > \langle \lambda_0, - \det X^{\lambda' \ge 0} \rangle / 2 + \langle \lambda_0, \det X^{\lambda' > 0} \rangle \\
    & = \langle \lambda_0, \det X^{\lambda' \ge 0} / 2 \rangle > - \langle \lambda_0, \chi-\theta \rangle,
\end{align*}
as in the case (3a). Thus, $\alpha_{\mu^{+}} = \alpha_\mu < \alpha_\chi$.
\end{proof}
\Cref{L:reduction_to_lambda_0}, \Cref{L:reduction_to_cylinder}, and \Cref{L:reduction_to_barrel} imply $\cC(\theta + \nabla) = \cC(M_\bR)$, and \Cref{T:main_fano} follows.

\end{proof}

\subsection{Example: representations of \texorpdfstring{$\GL_2$}{GL2}}\label{S:GL_2}

Recall that any finite dimensional representation $X$ of $\GL_{2}$ has the form
\begin{equation}\label{eq:GL_2_rep_decomposition}
    X \cong \bigoplus_{i=1}^{P} \Sym^{n_{i}}(k^{2})\otimes_k \textrm{det}^{m_{i}},
\end{equation}
where $n_{i}$ and $m_{i}$ are integers and $n_{i}\ge 0$. $N_{\bR}^{W}$ is one-dimensional, so there are, up to scaling by positive real numbers, only two choices for a central cocharacter $\lambda_0$ to which \Cref{H:main_setup} applies. Without any loss of generality, let us assume $\lambda_0=(-1,-1)$. Then $\lambda_0$ pairs strictly negatively with the weights of $X$ if and only if $m_i + n_i/2 > 0$ for all $i$. For representations of this kind, $\omega^\ast$ is a positive multiple of the determinant character. $X^{\rm ss}(\omega^\ast)$ has finite stabilizers if and only if $n_i$ is odd for all $i$, and in this case the unstable locus is $\GL_2 \cdot X^{\lambda' \geq 0}$ for $\lambda'=(-1,1)$. Thus \Cref{T:main_fano} gives the following:

\begin{cor} \label{ex:GL2_reps_example}
Let $X$ be a representation of $\GL_2$ whose corresponding decomposition \eqref{eq:GL_2_rep_decomposition} satisfies, for all $i = 1, \ldots, P$, the conditions 
\begin{enumerate}
    \item $m_i+n_i/2 > 0$, and 
    \item $n_i$ is odd.
\end{enumerate}
Then $X^{\rm ss}(\omega^\ast)$ is non-empty and has finite stabilizers, and $\DCoh(X^{\rm ss}(\omega^\ast)/\GL_2)$ has a full strong exceptional collection consisting of vector bundles.
\end{cor}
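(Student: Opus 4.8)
The plan is to recognize that conditions (1) and (2) are precisely the hypotheses of \Cref{T:main_fano} for $G=\GL_2$ and the $W$-invariant central cocharacter $\lambda_0=(-1,-1)$, and then to invoke \Cref{T:main_fano} together with \Cref{P:exceptional_collections}. Almost all of the work is a routine bookkeeping of the $T$-weights of the summands $\Sym^{n_i}(k^2)\otimes_k\det^{m_i}$ appearing in \eqref{eq:GL_2_rep_decomposition}; with $T$ the diagonal torus of $\GL_2$ these weights are $(n_i-j+m_i,\,j+m_i)$ for $0\le j\le n_i$.

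First I would verify \Cref{H:main_setup}. Pairing $\lambda_0=(-1,-1)$ against $(n_i-j+m_i,\,j+m_i)$ yields $-(n_i+2m_i)=-2(m_i+n_i/2)$, so condition (1) is exactly the requirement that $\langle\lambda_0,\beta\rangle<0$ for every weight $\beta$ of $X$; since $\lambda_0$ is already $W$-invariant, no averaging is needed. Because every $n_i$ is odd we have $n_i\ge 1$, so the $i$-th summand already contributes the two non-proportional weights $(n_i+m_i,m_i)$ and $(m_i,n_i+m_i)$; hence the weights of $X$ span $M_\bR$, which is the second standing hypothesis of \Cref{T:main_fano}. I would also record that the adjoint $T$-weights of $\mathfrak{gl}_2$ are $(1,-1),(-1,1),(0,0),(0,0)$, so $\det(\mathfrak{g})$ is trivial and $\omega^\ast=\det(X)=\big(\sum_i(n_i+1)(n_i/2+m_i)\big)(1,1)$, which by (1) is a strictly positive multiple of the determinant character.

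The essential translation is condition (2): a weight $(n_i-j+m_i,\,j+m_i)$ is proportional to $\omega^\ast\in\bR_{>0}\cdot(1,1)$ if and only if $n_i-j+m_i=j+m_i$, i.e.\ $n_i=2j$, and such a $j\in\{0,\dots,n_i\}$ exists exactly when $n_i$ is even. Thus ``every $n_i$ is odd'' is precisely ``no weight of $X$ is proportional to $\omega^\ast$''. Since $\langle\lambda_0,\omega^\ast\rangle<0$, \Cref{L:unstable_locus} now applies: $\GL_2$ acts with finite stabilizers on $X^{\rm ss}(\omega^\ast)$, with unstable locus $\GL_2\cdot X^{\lambda'\ge 0}$ for any nonzero $\lambda'$ spanning $(\omega^\ast)^\perp$, e.g.\ $\lambda'=(-1,1)$. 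Having checked every hypothesis, I would fix a $\lambda_0$-generic $\theta\in M_\bR^W$ — such $\theta$ exist because the relevant hyperplane arrangement is locally finite (see the discussion after \Cref{D:lambda_0_generic}), and one may take $\theta$ to be a small multiple of $\omega^\ast$ — and conclude: by \Cref{P:exceptional_collections} the $\GL_2$-equivariant bundles $\cO_{X^{\rm ss}(\omega^\ast)}\otimes U$, with $U$ irreducible of weights in $\theta+\nabla$, form a strong exceptional collection, and by the last sentence of \Cref{T:main_fano} this collection is full; the objects are vector bundles by construction.

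The only assertion not delivered verbatim by \Cref{T:main_fano} is that $X^{\rm ss}(\omega^\ast)$ is \emph{non-empty}, and this is where I expect the one genuine (if minor) difficulty to lie: \Cref{T:main_fano} only guarantees that the collection split-generates $\DCoh(X^{\rm ss}(\omega^\ast)/\GL_2)$, which is vacuous if that category is zero. I would settle non-emptiness by exhibiting an $\omega^\ast$-stable point — equivalently, by showing that the trivial representation occurs in $\Sym^\bullet(X^\ast)\otimes k\langle d\omega^\ast\rangle$ for some $d>0$ — via a direct Hilbert--Mumford computation applied to a tuple $x=(x_i)$ all of whose weight-components are nonzero. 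Note that since each $n_i$ is odd, $\dim X=\sum_i(n_i+1)$ is even, so $\dim X\ge\dim\GL_2=4$ unless $X$ is a single summand $\Sym^1(k^2)\otimes\det^m$; in that lone degenerate case one has $\dim X^{\rm ss}(\omega^\ast)/\GL_2<0$, forcing $X^{\rm ss}(\omega^\ast)=\emptyset$, and it should be excluded (or the statement read vacuously), whereas in all other cases the stable-point argument goes through and the proof is complete.
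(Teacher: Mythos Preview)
Your approach is exactly the paper's: the discussion preceding the corollary verifies that condition (1) is \Cref{H:main_setup} for $\lambda_0=(-1,-1)$, that $\omega^\ast$ is a positive multiple of $\det$, and that condition (2) is equivalent to no weight being proportional to $\omega^\ast$, after which the result is read off from \Cref{T:main_fano} and \Cref{P:exceptional_collections}. Your weight bookkeeping and the identification of the unstable locus via \Cref{L:unstable_locus} are correct and match the paper.

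You have in fact gone beyond the paper on one point. The assertion that $X^{\rm ss}(\omega^\ast)$ is \emph{non-empty} is not delivered by \Cref{T:main_fano}, and the paper does not address it separately; your observation that the single case $X\cong k^2\otimes\det^m$ (i.e.\ $P=1$, $n_1=1$) must be excluded is correct. In that case $\SL_2=G^\circ_{\omega^\ast}$ acts transitively on $X\setminus\{0\}$, so $0\in\overline{\SL_2\cdot x}$ for every $x$ and \Cref{L:instability} forces $X^{\rm ss}(\omega^\ast)=\emptyset$. In every other case one can exhibit a nonconstant $\SL_2$-invariant on $X$ (e.g.\ the determinant pairing on any two $\Sym^1$ summands, or the discriminant on a $\Sym^{n}$ summand with $n\ge 3$), which gives an $\omega^\ast$-semistable point; your proposed Hilbert--Mumford argument on a point with all weight-components nonzero needs a little care since one must check all $G$-conjugates of $T$-cocharacters, but the invariant-theoretic route just described settles it cleanly.
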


\begin{figure}[ht]
\begin{tabular}{ll}
\begin{tikzpicture}[scale=.60]

\filldraw[black!9!white] (-7/2,-3/2) --  (3/2,7/2) -- (7/2,3/2) -- (-3/2,-7/2) -- cycle;
\draw[-, thick, black] (1/2,9/2) -- (9/2,1/2);
\draw[-, thick, black] (-1/2,-9/2) -- (-9/2,-1/2);

\draw[<->, thin, color=gray] (-4.6,0) -- (4.6,0);
\draw[<->, thin, color=gray] (0,-4.6) -- (0,4.6);
\draw[step=.5, ultra thin, color=gray!50!white] (-4.6,-4.6) grid (4.6,4.6);

\draw[-, ultra thick, green!60!black] (-7/2,-3/2) --  (3/2,7/2);
\draw[-, ultra thick, green!60!black] (7/2,3/2) -- (-3/2,-7/2);

\fill[fill=gray,fill opacity=0.1] (-9/2,-1/2) -- (-9/2, 9/2) -- (1/2,9/2) -- (9/2, 1/2) -- (9/2,-9/2) -- (-1/2,-9/2) --cycle;

\draw[->, thick, color=blue] (3/2,-4/2) -- (-1.5/2,-8.5/2);
\draw[->, thick, color=blue] (4/2,-3/2) -- (8.5/2,1.5/2);
\node[blue] at (3.5/2,-3.5/2) {$\lambda_0$-strip};

\foreach \i in { (-4/2,-0/2),(-3/2,1/2),(-2/2,2/2),(-1/2,3/2),(0/2,4/2),(-6/2,-4/2),(-5/2,-3/2),(-4/2,-2/2),(-3/2,-1/2),(-2/2,0/2),(-1/2,1/2),(0/2,2/2),(1/2,3/2),(2/2,4/2),(3/2,5/2), (-5/2,-5/2),(-4/2,-4/2),(-3/2,-3/2),(-2/2,-2/2),(-1/2,-1/2),(-0/2,0/2),(1/2,1/2),(2/2,2/2),(3/2,3/2),(4/2,4/2), (-4/2,-6/2),(-3/2,-5/2),(-2/2,-4/2),(-1/2,-3/2),(-0/2,-2/2),(1/2,-1/2),(2/2,0/2),(3/2,1/2),(4/2,2/2),(5/2,3/2), (-5/2,-4/2),(-4/2,-3/2),(-3/2,-2/2),(-2/2,-1/2),(-1/2,0/2),(-6/2,-3/2),(-5/2,-2/2),(-4/2,-1/2),(-3/2,-0/2), (-2/2,1/2), (-4/2,-5/2),(-3/2,-4/2),(-2/2,-3/2),(-1/2,-2/2),(0/2,-1/2),(-3/2,-6/2),(-2/2,-5/2),(-1/2,-4/2),(-0/2,-3/2),(1/2,-2/2),(5/2,4/2),(4/2,3/2),(3/2,2/2),(2/2,1/2),(1/2,0/2),(4/2,1/2),(3/2,0/2), (2/2,-1/2), (4/2,5/2),(3/2,4/2),(2/2,3/2),(1/2,2/2),(0/2,1/2),(3/2,6/2),(2/2,5/2),(1/2,4/2),(0/2,3/2),(-1/2,2/2),(5/2,2/2),(6/2,3/2), (0/2,-4/2),(1/2,-3/2),(2/2,-2/2),(3/2,-1/2),(4/2,0/2), (4/2,6/2), (5/2, 5/2), (6/2, 4/2)}
{ \filldraw[red!90!black] \i circle (2pt); }
\draw (1/2,5/2) pic[rotate=45,blue] {cross=3pt};
\draw (2/2,6/2) pic[rotate=45,blue] {cross=3pt};
\draw (3/2,7/2) pic[rotate=45,blue] {cross=3pt};
\draw (-5/2,-1/2) pic[rotate=45,blue] {cross=3pt};
\draw (-6/2,-2/2) pic[rotate=45,blue] {cross=3pt};
\draw (-7/2,-3/2) pic[rotate=45,blue] {cross=3pt};
\draw (-1/2,-5/2) pic[rotate=45,blue] {cross=3pt};
\draw (-2/2,-6/2) pic[rotate=45,blue] {cross=3pt};
\draw (-3/2,-7/2) pic[rotate=45,blue] {cross=3pt};
\draw (5/2,1/2) pic[rotate=45,blue] {cross=3pt};
\draw (6/2,2/2) pic[rotate=45,blue] {cross=3pt};
\draw (7/2,3/2) pic[rotate=45,blue] {cross=3pt};

\node[blue] at (-1/4,-9/4) {$\diamond$};
\node[blue] at (-9/4,-1/4) {$\diamond$};
\node[blue] at (1/4,9/4) {$\diamond$};
\node[blue] at (9/4,1/4) {$\diamond$};

\node[blue] at (9/4+0.5,1/4-0.15) {$-P_1$};
\node[blue] at (-9/4-0.4,-1/4+0.20) {$P_1$};
\node[blue] at (1/4-0.45,9/4+0.43) {$-P_2$};
\node[blue] at (-1/4+0.35,-9/4-0.25) {$P_2$};

\end{tikzpicture}

&
\begin{tikzpicture}[scale=.60]
\filldraw[black!9!white] (-6.75/2-0.25/2,-3.25/2-0.25/2) -- (-4.5/2-0.25/2,-0.5/2-0.25/2) -- (0.5/2-0.25/2,4.5/2-0.25/2)  -- (3.25/2-0.25/2,6.75/2-0.25/2) -- (6.75/2-0.25/2,3.25/2-0.25/2) -- (4.5/2-0.25/2,0.5/2-0.25/2) -- (-0.5/2-0.25/2,-4.5/2-0.25/2) -- (-3.25/2-0.25/2,-6.75/2-0.25/2) -- cycle;
\draw[-, very thick, color=black] (-6.75/2,-3.25/2) -- (-4.5/2,-0.5/2) -- (0.5/2,4.5/2) -- (3.25/2,6.75/2) -- (6.75/2,3.25/2) -- (4.5/2,0.5/2) -- (-0.5/2,-4.5/2) -- (-3.25/2,-6.75/2) -- cycle;
\draw[-, very thick, color=green!60!black] (-6.75/2-0.25/2,-3.25/2-0.25/2) -- (-4.5/2-0.25/2,-0.5/2-0.25/2) -- (0.5/2-0.25/2,4.5/2-0.25/2) -- (3.25/2-0.25/2,6.75/2-0.25/2) -- (6.75/2-0.25/2,3.25/2-0.25/2) -- (4.5/2-0.25/2,0.5/2-0.25/2) -- (-0.5/2-0.25/2,-4.5/2-0.25/2) -- (-3.25/2-0.25/2,-6.75/2-0.25/2) -- cycle;

\draw[<->, thin, color=gray] (-4.6,0) -- (4.6,0);
\draw[<->, thin, color=gray] (0,-4.6) -- (0,4.6);
\draw[step=.5, ultra thin, color=gray!50!white] (-4.6,-4.6) grid (4.6,4.6);

\fill[fill=gray,fill opacity=0.2]  (-4.6,-4.6) -- (4.6,4.6) -- (4.6,-4.6) -- cycle;

\draw[->,ultra thick, color=green!60!black] (-2.5-0.2,-2.5-0.2) -- (-6/2-0.1,-6/2-0.1);
\node[color=blue] at (-2.9,-3.4) {$\theta$};

\foreach \i in { (-4/2,-0/2),(-3/2,1/2),(-2/2,2/2),(-1/2,3/2),(0/2,4/2),(-6/2,-4/2),(-5/2,-3/2),(-4/2,-2/2),(-3/2,-1/2),(-2/2,0/2),(-1/2,1/2),(0/2,2/2),(1/2,3/2),(2/2,4/2),(3/2,5/2), (-5/2,-5/2),(-4/2,-4/2),(-3/2,-3/2),(-2/2,-2/2),(-1/2,-1/2),(-0/2,0/2),(1/2,1/2),(2/2,2/2),(3/2,3/2),(4/2,4/2), (-4/2,-6/2),(-3/2,-5/2),(-2/2,-4/2),(-1/2,-3/2),(-0/2,-2/2),(1/2,-1/2),(2/2,0/2),(3/2,1/2),(4/2,2/2),(5/2,3/2), (-5/2,-4/2),(-4/2,-3/2),(-3/2,-2/2),(-2/2,-1/2),(-1/2,0/2),(-6/2,-3/2),(-5/2,-2/2),(-4/2,-1/2),(-3/2,-0/2), (-2/2,1/2), (-4/2,-5/2),(-3/2,-4/2),(-2/2,-3/2),(-1/2,-2/2),(0/2,-1/2),(-3/2,-6/2),(-2/2,-5/2),(-1/2,-4/2),(-0/2,-3/2),(1/2,-2/2),(5/2,4/2),(4/2,3/2),(3/2,2/2),(2/2,1/2),(1/2,0/2),(4/2,1/2),(3/2,0/2), (2/2,-1/2), (4/2,5/2),(3/2,4/2),(2/2,3/2),(1/2,2/2),(0/2,1/2),(3/2,6/2),(2/2,5/2),(1/2,4/2),(0/2,3/2),(-1/2,2/2),(5/2,2/2),(6/2,3/2), (0/2,-4/2),(1/2,-3/2),(2/2,-2/2),(3/2,-1/2),(4/2,0/2)}
{ \filldraw[red!90!black] \i circle (2pt); }

\draw (4/2,6/2) pic[rotate=45,blue] {cross=3pt};
\draw (5/2,5/2) pic[rotate=45,blue] {cross=3pt};
\draw (6/2,4/2) pic[rotate=45,blue] {cross=3pt};

\node[blue] at (2,-3.5) {$M^{+}_\bR$};

\end{tikzpicture}
\end{tabular}

\caption{\footnotesize This is a description of the windows for the representation $X=(k^2)^{\oplus 10}$ discussed in \Cref{ex:grassmannian_N_10}. To the left is the cylinder window in the case $\theta = 0$ with the blue crosses indicating the $T$-weights in $\overline{\nabla} \setminus \nabla$. As in \Cref{fig:grassmannian_N_6}, we have marked the special weights $P_i := \zeta_{\lambda_i}/2$ for $i=1,2$ that become relevant in the passage from the cylinder window to the barrel window. To the right is a presentation of the perturbation of the barrel window by a parameter $\theta = s\omega^\ast$. Here we take $-s$ to be a small positive real number. The weights indicated by blue crosses are those that are excluded from $\nabla$ by the $\theta$ perturbation. } \label{fig:grassmannian_N_10}
\end{figure}
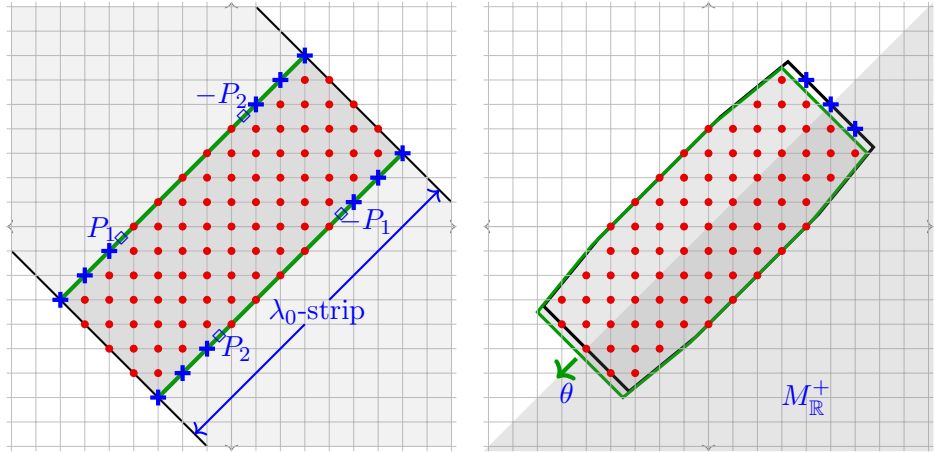

The $\GL_2$ representations satisfying the conditions of \Cref{ex:GL2_reps_example} provide a large class of examples for \Cref{T:main_fano}. For such a representation, we will generally take $\lambda_0 = (-1,-1)$, although any positive multiple of it also works. The subset $(\omega^\ast)^\perp \subset N_\bR$ consists of zero and positive multiples of the cocharacters $\lambda^{\omega^\ast}_1 := (-1,1)$ and $\lambda^{\omega^\ast}_2 := (1,-1)$. Therefore, $\lambda_0, \lambda^{\omega^\ast}_1, \lambda^{\omega^\ast}_2$ suffice to describe the cylinder and barrel windows (see \Cref{D:window}).

\begin{ex}\label{ex:grassmannian_N_10}
Consider the linear representation $X=(k^2)^{\oplus N}$ of $\GL_2$, whose corresponding GIT quotient $X^{\rm ss}(\omega^\ast)/ \GL_2$ is the Grassmannian of planes in $k^N$. Then $\zeta_{\lambda_0}=(-N,-N), \zeta_{\lambda_1^{\omega^\ast}} = (-N+1,-1)$, and  $\zeta_{\lambda_2^{\omega^\ast}}=(-1,-N+1)$, so $\eta_{\lambda_0} = 2N$ and $\eta_{\lambda_1^{\omega^\ast}}=\eta_{\lambda_2^{\omega^\ast}}=N-2$. \Cref{P:exceptional_collections} and \Cref{T:main_fano} imply that the set of $T$-weights in $M^+ \cap (\theta + {\nabla})$ produce a full strong exceptional collection of vector bundles in $\rm Gr (2,10)$ for any $\theta \in M_\bR^W$ that is $\lambda_0$-generic. The barrel window in the case $N=10$ is described in \Cref{fig:grassmannian_N_10} below. One can check that the number of dominant $T$-weights in $\theta + {\nabla}$ is $45$. This full strong exceptional collection on $\rm Gr(2,10)$ differs from that of \cite{K88}.
\end{ex}

\section{Nef-Fano GIT quotients by a rank-\texorpdfstring{$2$}{2} torus}

The goal of this section is to extend the methods of \Cref{S:Fano_rank2} to cover GIT quotients by a rank-two torus with a GIT parameter that is close to the anticanonical character $\omega^\ast$ in the sense of variation of GIT, thus the resulting GIT quotients are nef-Fano. We prove the following theorem and discuss how it extends the results of \cite{BH09} from the Fano case to the nef-Fano case. 

\begin{thm}\label{T:main_nef_fano}
Let $G=T$ be a split torus of rank $2$ over $k$ and let $X$ be an affine scheme with a linear $T$-action satisfying \Cref{H:main_setup}. Assume that the weights of $X$ span $M_\bR$. Let $\theta\in M_\bR$ be a character such that $\theta + \overline{\nabla}$ contains no weights on its boundary. If some weight of $X$ is proportional to $\omega^\ast$, then for some generic character $\ell \in M_\bR$ and $0 < \epsilon \ll 1$,  $X^{\rm ss} (\omega ^\ast + \epsilon \ell)$ has finite $T$-stabilizers, and the line bundles
\[
\left\{ \cO_{X^{\rm ss} (\omega^\ast + \epsilon \ell)} \otimes U : U \in \Rep(T) \text{ a character lying in }\theta + \nabla_{\omega^\ast + \epsilon \ell} \right\}
\]
split-generate $\DCoh(X^{\rm ss}(\omega^\ast + \epsilon \ell)/ T)$. In particular, the exceptional collection from \Cref{P:exceptional_collections} is full.
\end{thm}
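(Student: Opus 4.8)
The plan is to run the argument of \Cref{T:main_fano} with the perturbed parameter $\ell' := \omega^\ast + \epsilon \ell$ in place of $\omega^\ast$, after first choosing the perturbation so that it is ``generic.'' There are only finitely many rational lines in $M_\bR$ spanned by a subset of the weights of $X$, so I would choose $\ell \in M_\bR$ so that no character $\omega^\ast + \epsilon\ell$ with $0<\epsilon\ll1$ lies on any of them, and in particular so that no weight of $X$ is proportional to $\ell'$. For $\epsilon$ small one has $\langle\lambda_0,\ell'\rangle<0$, and by the Hilbert--Mumford criterion a point of $X^{\rm ss}(\ell')$ with positive-dimensional stabilizer would be fixed by a nonzero cocharacter $\lambda$ with $\langle\lambda,\ell'\rangle=0$ and with $\lambda^\perp\cap M_\bR$ spanned by weights of $X$; since $\ell'$ is generic this forces $\lambda=0$. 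Hence $X^{\rm ss}(\ell')$ has finite $T$-stabilizers; it is nonempty because the weights span $M_\bR$ so the unstable locus is a proper closed subset, and it is contained in $X^{\rm ss}(\omega^\ast)$ by variation of GIT. Using the hypothesis that $\theta+\overline{\nabla}$ has no weight on its boundary, and shrinking $\epsilon$, I would arrange that $\theta$ is $\lambda_0$-generic with respect to $\ell'$ and that $(\theta+\nabla_{\ell'})\cap M=(\theta+\overline{\nabla})\cap M$; granting this, \Cref{P:exceptional_collections} makes the collection in the statement a strong exceptional collection, and it remains only to prove split-generation, after which ``full'' follows.

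For split-generation I would, exactly as in the proof of \Cref{T:main_fano}, introduce the full triangulated subcategory $\cC(S)\subset\DCoh(X/T)$ generated by all complexes set-theoretically supported on $X^{\rm us}(\ell')$ together with the line bundles $\cO_X\otimes\chi$ for $\chi\in S\cap M$. Since $X$ is affine and every irreducible $T$-representation is a character, $\cC(M_\bR)=\DCoh(X/T)=\Perf(X/T)$, and restriction to $X^{\rm ss}(\ell')$ is a Verdier localization that kills the unstably supported complexes; so it suffices to prove $\cC(\theta+\nabla_{\ell'})=\cC(M_\bR)$.

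I would establish this by the same three-step chain of equalities $\cC(M_\bR)=\cC(\theta+B_{\lambda_0})=\cC(\theta+\overline{\nabla}_{\ell'})=\cC(\theta+\nabla_{\ell'})$. The torus case is much cleaner than the general rank-two case of \Cref{T:main_fano}: the Weyl group is trivial, $\rho=0$, Borel--Weil--Bott is vacuous, so $\mu^+=\mu$ and the complexes $C_{\lambda,\chi}$ and $D^\vee_{\lambda,\chi}$ of \Cref{P:minimal_complex} are literally the Koszul resolutions of $\cO_{X^{\lambda\geq0}}\otimes\chi$ and $\cO_{X^{\lambda\leq0}}\otimes\chi$, with terms as described in \Cref{L:complex_1} and \Cref{L:complex_2}. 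Step~1 is verbatim: for a weight $\chi$ at the top (resp. bottom) of a $\lambda_0$-strip one uses $C_{\lambda_0,\chi+\det X}$ (resp. $C_{\lambda_0,\chi}$), which is supported at the origin, hence unstably supported, to replace $\cO_X\otimes\chi$ by sheaves $\cO_X\otimes\mu$ with strictly smaller $\lvert\langle\lambda_0,\mu-\theta\rangle\rvert/\eta_{\lambda_0}$. For Steps~2 and~3 one works with the two cocharacters $\pm\lambda'$ spanning the line $(\ell')^\perp$; the crucial geometric input, and what replaces the special role of $\omega^\ast$ in \Cref{T:main_fano}, is that since no weight is proportional to $\ell'$, \Cref{L:unstable_locus} gives $T\cdot X^{\pm\lambda'\geq0}\subset X^{\rm us}(\ell')$, so every complex $C_{\pm\lambda',\chi}$ and $D^\vee_{\pm\lambda',\chi}$ is exact on $X^{\rm ss}(\ell')$. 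Given $\chi\in\theta+(B_{\lambda_0}\setminus\overline{\nabla}_{\ell'})$, exactly one of the two facet inequalities cutting out $\overline{\nabla}_{\ell'}$ along the line fails, say the one for $\lambda'$; one takes $C_{\lambda',\chi}$ or $D^\vee_{\lambda',\chi}$ according to whether $\langle\lambda_0,\chi-\theta\rangle$ is $\le$ or $\ge$ the threshold $Q_{\lambda'}$ of \eqref{eq:Q_lambda} — the choice keeping the new weights inside $B_{\lambda_0}$ — and the analogues of \Cref{Claim:r_mu_less_1} and \Cref{Claim:r_mu_less_2} (now with $\rho=0$ and $X^{\lambda'=0}=0$) show the new weights stay in $B_{\lambda_0}$ and strictly approach $\overline{\nabla}_{\ell'}$. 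Step~3 is the same, the complexes now not changing which $\lambda'$-facet $\chi$ lies on but pushing $\langle\lambda_0,\chi-\theta\rangle$ toward $\langle\lambda_0,\zeta_{\lambda'}\rangle/2$ until the relevant cap inequality of \Cref{D:window} holds.

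The main obstacle, and the only place the proof genuinely differs from \Cref{T:main_fano}, is that there one has $\eta_{\lambda'}=\eta_{-\lambda'}$ because $\lambda'\in(\omega^\ast)^\perp$, whereas for $\ell'\neq\omega^\ast$ this fails; although $\overline{\nabla}_{\ell'}$ is still symmetric in $\langle\lambda',\cdot\rangle$ with half-width $\min(\eta_{\lambda'},\eta_{-\lambda'})/2$, the asymmetry between $\zeta_{\lambda'}$ and $-\zeta_{-\lambda'}$ enters the barrel window and, more seriously, a single reduction step could in principle push a weight lying just outside one facet past the opposite facet, breaking the induction. The resolution is that $\eta_{\lambda'}-\eta_{-\lambda'}=-\langle\lambda',\omega^\ast\rangle=\epsilon\langle\lambda',\ell\rangle=O(\epsilon)$, so such a failure can only occur within an $O(\epsilon)$-neighbourhood of a facet of $\overline{\nabla}_{\ell'}$, which for $\epsilon$ small is an $O(\epsilon)$-neighbourhood of a facet of $\overline{\nabla}=\overline{\nabla}_{\omega^\ast}$; the hypothesis that $\theta+\overline{\nabla}$ has no weight on its boundary then guarantees that no lattice point lies there, so no such step is ever taken — equivalently, for $\epsilon$ small the whole reduction takes place among the lattice points of $\theta+\overline{\nabla}$ and is governed by the symmetric combinatorics of \Cref{T:main_fano}. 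Verifying these genericity-and-smallness bookkeeping statements, and writing out the asymmetric forms of \Cref{Claim:r_mu_less_1} and \Cref{Claim:r_mu_less_2}, is where the real work lies.
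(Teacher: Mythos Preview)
Your plan is viable but takes a genuinely different route from the paper. You work throughout with a cocharacter $\lambda'$ spanning the \emph{perturbed} hyperplane $(\omega^\ast+\epsilon\ell)^\perp$, so that $X^{\lambda'=0}=0$ and \Cref{L:unstable_locus} applies directly, and then confront the asymmetry $\eta_{\lambda'}\neq\eta_{-\lambda'}$ head-on. The paper does the opposite: it keeps $\lambda'\in(\omega^\ast)^\perp$ \emph{unperturbed}, so that $\eta_{\lambda'}=\eta_{-\lambda'}$ exactly and the inequalities of \Cref{Claim:r_mu_less_1}/\Cref{Claim:r_mu_less_2} hold without any $O(\epsilon)$ fudge; the cost is that $X^{\lambda'=0}\neq 0$, so $C_{\lambda',\chi}$ may fail to move $\chi$ in the $\lambda'$-direction at all. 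The paper resolves this with a two-tier induction on enlarged cylinders $\overline{\nabla}^R$: on a sub-facet $\Psi_R$ it uses the \emph{perturbed} complex $D^\vee_{\lambda'_\epsilon,\chi}$ or $C_{-\lambda'_\epsilon,\chi}$, which does pick up the $\lambda'$-zero weights, and when only those appear one inducts on $\langle\lambda_0,\chi-\theta\rangle$ along the facet; off $\Psi_R$ the unperturbed complex strictly reduces $R$. So the paper mixes perturbed and unperturbed complexes rather than committing to one.

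Your $O(\epsilon)$ heuristic can be made rigorous, but you should sharpen it: the only lattice points $\chi$ for which the monotone choice of complex (the one strictly decreasing $|\langle\lambda',\chi-\theta\rangle|$) is unavailable are those with $|\langle\lambda',\chi-\theta\rangle|$ lying between $\min(\eta_{\lambda'},\eta_{-\lambda'})/2$ and $\max(\eta_{\lambda'},\eta_{-\lambda'})/2$ \emph{and} $\langle\lambda_0,\chi-\theta\rangle$ on the side of $Q_{\lambda'}$ forcing the overshooting complex for the $\lambda_0$-strip constraint. This slab has $\lambda'$-width $|\eta_{\lambda'}-\eta_{-\lambda'}|/2=O(\epsilon)$ and abuts a facet of $\theta+\overline{\nabla}$, so the boundary hypothesis does exclude every lattice point from it once $\epsilon$ is small; since all intermediate $\mu$'s in the induction are themselves lattice points in $\theta+B_{\lambda_0}$, none of them lands there either, and the induction terminates. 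Two smaller remarks: your Step~3 is vacuous, since the hypothesis that $\theta+\partial\overline{\nabla}$ contains no lattice points already gives $(\theta+\nabla_{\ell'})\cap M=(\theta+\overline{\nabla})\cap M$ for $\epsilon$ small (the paper notes this at the outset and reduces everything to the cylinder); and the paper does not invoke \Cref{L:unstable_locus} for both $\pm\lambda'$ but instead checks directly via Hilbert--Mumford that $X^{\lambda'\geq 0}\subset X^{\rm us}(\omega^\ast+\epsilon\ell)$ for the unperturbed $\lambda'$ with $\langle\lambda',\ell\rangle<0$.
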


\begin{proof}
That $X^{\rm ss} (\omega ^\ast + \epsilon \ell)$ has finite $T$-stabilizers for some generic character $\ell$ and $0 < \epsilon \ll 1$ is a consequence the theory of variation of GIT (see \Cref{ex:vgit_near_anticanonical}), so we need to only prove fullness of the collection.

The ``genericity" assumption that $\theta + \overline{\nabla}$ contains no weights on its boundary implies the equality of sets
\[
    M \cap \left( \theta + \overline{\nabla} \right) = M \cap \left( \theta + \overline{\nabla}_{\omega^\ast + \epsilon \ell} \right) 
\]
for all $0 \le \epsilon \ll 1$. Indeed, $(\omega^\ast)^\perp$ is spanned by a single cocharacter, say $\lambda'$, so 
\begin{equation} \label{eq:lambda_epsilon}
   \lambda'_\epsilon = \lambda'- \epsilon \frac{\langle \lambda',\ell\rangle }{\langle \lambda_0, \omega^\ast+\epsilon\ell \rangle} \lambda_0
\end{equation}
spans $(\omega^\ast + \epsilon \ell)^\perp$. Moreover, there are finitely many $T$-weights in $\theta + \textrm{Int} \overline{\nabla}$ and for such $\chi$, $\epsilon$ can be made sufficiently small so that $\lvert \langle \lambda', \chi - \theta \rangle \rvert < \eta_{\lambda'}/2$ if and only if $\lvert \langle \lambda'_\epsilon, \chi - \theta \rangle \rvert < \eta_{\lambda'_\epsilon}/2$. 

The same genericity assumption together with the definition of the barrel window imply the equality $M \cap \left( \theta + \overline{\nabla} \right) =  M \cap \left( \theta + \nabla \right)$. Thus, adopting the notation $\cC(S) \subset \DCoh(X/T)$ for $S \subset M_\bR$ from the proof of \Cref{T:main_fano}, we wish to show that for any $\chi \in M$, $\cO_X \otimes V(\chi) \in \cC(\theta + \overline{\nabla})$. This is equivalent to showing $\cC(M_\bR)=\cC(\theta+\overline{\nabla})$. The first step, that $\cC(M_\bR) = \cC(\theta+B_{\lambda_0})$, is identical to the proof of \Cref{L:reduction_to_lambda_0}. To complete this proof, we will redo the reduction to the cylinder window in the proof of \Cref{T:main_fano}, showing $\cC(\theta+B_{\lambda_0})=\cC(\theta+\overline{\nabla})$, with modifications accounting for the existence of a weight that is proportional to $\omega^\ast$ and from having a trivial Weyl group, i.e., $M_\bR^W=M_\bR$.

\medskip
\noindent \textit{Reduction to the cylinder window.}
\medskip

Let $\lambda'$ be a cocharacter that spans $(\omega^\ast)^\perp$ and satisfies $\langle \lambda', \ell \rangle <  0$. This pairing cannot equal zero because $\ell$ cannot be proportional to $\omega^\ast$. Define for $R \ge 1$ the enlarged cylinder window 
\[
    \overline{\nabla}^R := \left\{ \chi \in B_{\lambda_0} : \lvert \langle \lambda', \chi \rangle \rvert \le R \eta_{\lambda'}/ 2 \right\}.
\]
Take $Q_{\lambda'}$ as in \eqref{eq:Q_lambda} and define the following subset of the boundary of $\overline{\nabla}^R$. 
\[
    \Psi_R := \left\{  \chi \in  \overline{\nabla}^R: \left\{ \begin{array}{ll}
         \langle \lambda', \chi \rangle = - R \eta_{\lambda'}/ 2 \textrm{ and } Q_{\lambda'} < \langle \lambda_0, \chi \rangle \le \eta_{\lambda_0}/ 2, \text{ or} \\
          \langle \lambda', \chi \rangle = R \eta_{\lambda'}/ 2 \textrm{ and } - \eta_{\lambda_0}/ 2 \le \langle \lambda_0, \chi \rangle < - Q_{\lambda'} .
    \end{array} \right. \right\}
\]

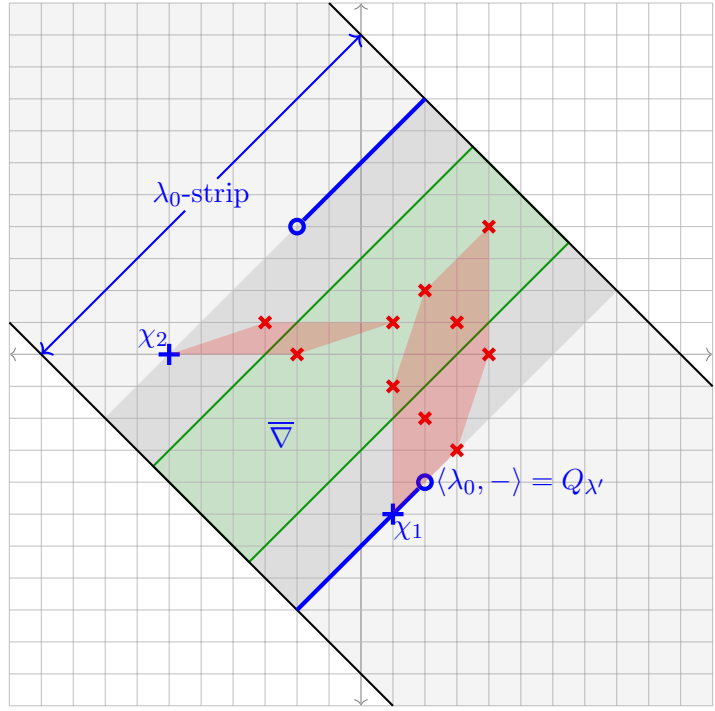
\begin{figure}[ht]
\begin{tikzpicture}[scale=.85]

\filldraw[gray!20!white] (-8/2,-2/2) --  (2/2,8/2) -- (8/2,2/2) -- (-2/2,-8/2) -- cycle;
\fill[fill=green,fill opacity=0.1]  (-6.5/2,-3.5/2) --  (3.5/2,6.5/2) -- (6.5/2,3.5/2) -- (-3.5/2,-6.5/2) -- cycle;

\draw[<->, thin, color=gray] (-5.5,0) -- (5.5,0);
\draw[<->, thin, color=gray] (0,-5.5) -- (0,5.5);
\draw[step=.5, ultra thin, color=gray!50!white] (-5.5,-5.5) grid (5.5,5.5);

\draw[-, thick, green!60!black] (-6.5/2,-3.5/2) --  (3.5/2,6.5/2) -- (6.5/2,3.5/2) -- (-3.5/2,-6.5/2) -- cycle;
\node[blue] at (-2.5/2,-2.5/2) {$\overline{\nabla}$};
\draw[-, thick, black] (-11/2,1/2) --  (1/2,-11/2);
\draw[-, thick, black] (-1/2,11/2) --  (11/2,-1/2);

\fill[fill=gray,fill opacity=0.09] (-11/2,1/2) -- (-11/2, 11/2) -- (-1/2,11/2) -- (11/2, -1/2) -- (11/2, -11/2) -- (1/2,-11/2) -- cycle;
\draw[->, thick, color = blue] (-4.5/2,5.5/2) -- (0/2,10/2);
\draw[->, thick, color = blue] (-5.5/2,4.5/2) -- (-10/2,0/2);
\node[blue] at (-5.0/2,5.0/2) {$\lambda_0$-strip};

\draw[-, ultra thick, color=blue] (1.8/2,-4.2/2) -- (-2/2,-8/2);
\draw[-, ultra thick, color=blue] (-1.8/2,4.2/2) -- (2/2,8/2);
\draw[ultra thick, blue] (2/2,-4/2) circle (3 pt);
\draw[ultra thick, blue] (-2/2,4/2) circle (3 pt);
\node[blue] at (5/2,-4/2) {$\langle \lambda_0, - \rangle = Q_{\lambda'}$};

\draw (-6/2,0/2) pic[rotate=45,blue] {cross=4pt};
\node[blue] at (-6.5/2,0.5/2) {$\chi_2$};
\draw (-2/2,0/2) pic[rotate=0,red!90!black] {cross=3pt};
\draw (1/2,1/2) pic[rotate=0,red!90!black] {cross=3pt};
\draw (-3/2,1/2) pic[rotate=0,red!90!black] {cross=3pt};
\fill[fill=red,fill opacity=0.2] (-6/2,0/2) --  (-2/2, 0/2) -- (1/2,1/2) -- (-3/2,1/2) -- cycle;

\draw (1/2,-5/2) pic[rotate=45,blue] {cross=4pt};
\node[blue] at (1.5/2,-5.5/2) {$\chi_1$};
\draw (1/2,-1/2) pic[rotate=0,red!90!black] {cross=3pt};
\draw (2/2,2/2) pic[rotate=0,red!90!black] {cross=3pt};
\draw (3/2,1/2) pic[rotate=0,red!90!black] {cross=3pt};
\draw (4/2,4/2) pic[rotate=0,red!90!black] {cross=3pt};
\draw (2/2,-2/2) pic[rotate=0,red!90!black] {cross=3pt};
\draw (4/2,0/2) pic[rotate=0,red!90!black] {cross=3pt};
\draw (3/2,-3/2) pic[rotate=0,red!90!black] {cross=3pt};
\fill[fill=red,fill opacity=0.2] (1/2,-5/2) --  (1/2, -1/2) -- (2/2,2/2) -- (4/2,4/2) -- (4/2,0/2) -- (3/2,-3/2) -- cycle;
\end{tikzpicture}
\caption{\footnotesize  This diagram illustrates the reduction to the cylinder window $\overline{\nabla}$ for the $\bG_m^2$ representation $X = \Sym^4 \bC^2$ with $\lambda_0 = (-1,-1)$, $\theta = 0$, $\ell = (1,-1)$, and $\lambda' = (-1, 1)$. The weights of this representation are $(0,4),(1,3),(2,2),(3,1),(4,0)$. Thus, $\eta_{\lambda_0} = 20$, $\eta_{\lambda'} = 6$, and the green shaded region is the cylinder window. For each $T$-weight we pick an appropriate complex depending on a comparison of its $\lambda_0$-pairing with $Q_{\lambda'} = 2$. In this example, the gray shaded rectangle is the enlarged cylinder window for $R=2$. The blue circles on the right and left side facets indicate the weights on the boundary of $\overline{\nabla}^R$ whose $\lambda_0$-pairing is $Q_{\lambda'}$ and $-Q_{\lambda'}$ respectively, thus the blue lines on the boundary of the enlarged cylinder indicate the region $\Psi_R$. The weight $\chi_1$ is in $\Psi_R$ and this falls into Case 1 of \Cref{claim:nef_fano_cylinder_reduction}. The resulting weights $\mu$ are indicated by the red crosses and one can see that these lie either in the interior of $\overline{\nabla}^R$ or they have a smaller pairing with $\lambda_0$. The weight $\chi_2$ lies in $\overline{\nabla}^R \setminus \Psi_R$ and it falls into the case 2 of \Cref{claim:nef_fano_cylinder_reduction_2}. Again, the resulting weights $\mu$ are indicated by the red crosses and one can see that these lie entirely in the interior of $\overline{\nabla}^R$.} \label{fig:nef_fano_cylinder_reduction}
\end{figure}

\begin{claim}\label{claim:nef_fano_cylinder_reduction} If $R>1$, then for any $T$-weight $\chi \in \theta + \Psi_R$, $\cO_X \otimes V(\chi) \in \cC(\theta + ( \overline{\nabla}^R \setminus \Psi_R))$.
\end{claim}

\begin{proof} There are two cases:

\medskip
\noindent \textit{Case 1: $\langle \lambda', \chi - \theta \rangle = - R \eta_{\lambda'}/ 2 \textrm{ and } Q_{\lambda'} < \langle \lambda_0, \chi - \theta \rangle \le \eta_{\lambda_0}/ 2$.}
\medskip

We will prove the claim for the finite set of weights satisfying these properties using induction on $\langle \lambda_0, \chi-\theta \rangle$. Recall the cocharacter $\lambda'_\epsilon$ from \eqref{eq:lambda_epsilon}, and consider the complex $D^\vee_{\lambda'_\epsilon}, \chi$ for $0<\epsilon \ll 1$. It has homology supported in $G \cdot X^{-\lambda'_\epsilon \ge 0}$, which lies in $X^{\rm us}(\omega^\ast + \epsilon \ell)$ by \Cref{L:unstable_locus}. \Cref{L:complex_2} says that $D_{\lambda'_\epsilon, \chi}^\vee$ has one term of the form $\cO_X \otimes V(\chi)$ and other terms that have a filtration whose associated graded pieces are of the form $\cO_X \otimes V(\mu)$, where $\mu = \chi + \beta_{i_1}+\beta_{i_2}+\cdots+\beta_{i_p}$ with $p > 0$, $i_1,\ldots,i_p$ distinct, and $\langle\lambda'_\epsilon,\beta_{i_j} \rangle > 0$ for all $j =1, \ldots, p$.

That $\mu$ remains in the $\lambda_0$-strip follows from the assumed bounds on $\langle \lambda', \chi - \theta \rangle$ and the inequality
\begin{equation} \label{eq:Q_lambda_2}
    Q_{\lambda'} = -\frac{\eta_{\lambda_0}}{2} + \langle \lambda_0, - \det X^{\lambda' \ge 0} \rangle >-\frac{\eta_{\lambda_0}}{2},
\end{equation}
where the first equality is obtained from $-\eta_{\lambda_0} = \langle \lambda_0, \det X \rangle$.

Note that the weights $\beta_{i_j}$ that appear in $\mu$ are independent of $\epsilon$ for $\epsilon>0$ sufficiently small. Since $\lambda'$ satisfies $\langle \lambda', \ell \rangle <0$ and we can take $\epsilon$ to be arbitrarily small, the weights $\beta_{i_j}$ appearing in $\mu$ are precisely those for which $\langle\lambda',\beta_{i_j} \rangle \ge 0$. Therefore we have
\[
  \langle \lambda', \mu - \theta \rangle \le -R \eta_{\lambda'}/2 + \langle \lambda', \det X^{\lambda' \geq 0} \rangle = (-R/2 + 1) \eta_{\lambda'} < R \eta_{\lambda'}/ 2 .
\]
Here we have used $\langle \lambda', \det X^{\lambda' \geq 0} \rangle = \eta_{- \lambda'} = \eta_{\lambda'}$ to deduce the equality and the assumption $R > 1$ for the last inequality. If there is one $\beta_{i_j}$ in $\mu$ with $\langle \lambda',\beta_{i_j} \rangle >0$, we also have $\langle \lambda', \mu-\theta \rangle > \langle \lambda',\chi-\theta \rangle = -R\eta_{\lambda'}/2$, and hence $\mu -\theta \in \overline{\nabla}^{S} \subset \overline{\nabla}^R \setminus \Psi_R$ for some $S < R$.

Otherwise, all $\beta_{i_j}$ appearing in $\mu$ pair zero with $\lambda'$, so $\langle \lambda', \mu - \theta \rangle = -R \eta_{\lambda'}/ 2$. But since $p > 0$, we have $\langle \lambda_0, \chi - \theta \rangle > \langle \lambda_0, \mu - \theta \rangle$, so by the inductive hypothesis we know $\cO_X\otimes V(\mu) \in \cC(\theta + ( \overline{\nabla}^R \setminus \Psi_R))$. The claim follows.

\medskip
\noindent \textit{Case 2: $\langle \lambda', \chi - \theta \rangle = R \eta_{\lambda'}/ 2 \textrm{ and } - \eta_{\lambda_0}/ 2 \le \langle \lambda_0, \chi - \theta \rangle < - Q_{\lambda'}$.}
\medskip

We will prove the claim for the finite set of weights satisfying these properties using induction on $-\langle \lambda_0, \chi-\theta \rangle$. Take the complex $C_{-\lambda'_\epsilon}, \chi$ with $\lambda'_\epsilon$ as in \eqref{eq:lambda_epsilon} and $0<\epsilon \ll 1$. It has homology supported in $G \cdot X^{-\lambda'_\epsilon \ge 0}$, which lies in $X^{\rm us}(\omega^\ast + \epsilon \ell)$ by \Cref{L:unstable_locus}. \Cref{L:complex_1} says that $C_{-\lambda'_\epsilon, \chi}$ has one term of the form $\cO_X \otimes V(\chi)$ and other terms that have a filtration whose associated graded pieces are of the form $\cO_X \otimes V(\mu)$, where $\mu = \chi - \beta_{i_1}-\beta_{i_2}-\cdots-\beta_{i_p}$ with $p > 0$, $i_1,\ldots,i_p$ distinct, and $\langle -\lambda'_\epsilon,\beta_{i_j} \rangle < 0$ for all $j =1, \ldots, p$. As in the previous case, \eqref{eq:Q_lambda_2} and the assumed bounds for $\langle \lambda_0, \chi -\theta \rangle$ imply that $\mu$ remains in the $\lambda_0$-strip.

As in the previous case, the facts that the $\beta_{i_j}$ appearing in $\mu$ are constant for small $\epsilon <0$ and that $\langle \lambda', \ell \rangle <0$ imply that the weights $\beta_{i_j}$ appearing in $\mu$ are precisely those for which $\langle \lambda',\beta_{i_j} \rangle \geq 0$. This implies that $\langle \lambda', \mu - \theta \rangle \ge R \eta_{\lambda'}/2 - \langle \lambda', \det X^{\lambda' \geq 0} \rangle  > - R \eta_{\lambda'}/ 2$, where we have used $\eta_{- \lambda'} = \eta_{\lambda'}$ and the assumption $R > 1$ to deduce the last inequality. If one of the $\beta_{i_j}$ satisfies $\langle \lambda', \beta_{i_j} \rangle>0$, then $\langle \lambda', \mu - \theta \rangle < \langle \lambda', \chi-\theta \rangle = R \eta_{\lambda'}/2$, and hence $\mu -\theta \in \overline{\nabla}^{S} \subset \overline{\nabla}^R \setminus \Psi_R$ for some $S < R$.

On the other hand, if all $\beta_{i_j}$ appearing in $\mu$ pair zero with $\lambda'$, then $\langle \lambda', \mu - \theta \rangle = R \eta_{\lambda'}/ 2$. But since $p > 0$, we have $\langle \lambda_0, \chi - \theta \rangle < \langle \lambda_0, \mu - \theta \rangle$. Thus, by the inductive hypothesis we know $\cO_X \otimes V(\mu) \in \cC(\theta+(\overline{\nabla}^R \setminus \Psi_R))$. The claim follows.

\end{proof}

\begin{claim} \label{claim:nef_fano_cylinder_reduction_2}
If $R>1$, then for any $T$-weight $\chi \in \theta + ( \overline{\nabla}^R \setminus \Psi_R)$, $\cO_X \otimes V(\chi) \in \cC(\theta +  \overline{\nabla}^{S})$ for some $S<R$.
\end{claim}

\begin{proof} There are two cases:

\medskip
\noindent \textit{Case 1: $\langle \lambda', \chi - \theta \rangle = - R \eta_{\lambda'}/ 2 \textrm{ and } -\eta_{\lambda_0}/ 2 \le \langle \lambda_0, \chi - \theta \rangle \le Q_{\lambda'}$.}
\medskip

Take the complex $C_{\lambda', \chi}$. It has homology supported in $G \cdot X^{\lambda' \geq 0}$, which lies in $X^{\rm us} (\omega^\ast + \epsilon \ell)$ by the Hilbert-Mumford criterion and the assumption $\langle \lambda', \ell \rangle < 0$. \Cref{L:complex_1} says that $C_{\lambda', \chi}$ has one term of the form $\cO_X \otimes V(\chi)$ and other terms that have a filtration whose associated graded pieces are of the form $\cO_X \otimes V(\mu)$, where $\mu = \chi - \beta_{i_1}-\beta_{i_2}-\cdots-\beta_{i_p}$ with $p > 0$, $i_1,\ldots,i_p$ distinct, and $\langle\lambda',\beta_{i_j} \rangle < 0$ for all $j =1, \ldots, p$. 

As in the proof of \Cref{claim:nef_fano_cylinder_reduction}, the assumed bounds for $\langle \lambda_0, \chi -\theta \rangle$ imply that $\mu$ remains in the $\lambda_0$-strip $\theta + B_{\lambda_0}$. We also have 
\[
\langle \lambda', \chi - \theta \rangle < \langle \lambda', \mu - \theta \rangle \le (- R/2 + 1) \eta_{\lambda'} < R \eta_{\lambda'}/2,
\]
where the first inequality holds because $p > 0$ and the last inequality holds because $R > 1$. Thus, $\mu -\theta \in \overline{\nabla}^{S}$ with $S < R$.

\medskip
\noindent \textit{Case 2: $\langle \lambda', \chi - \theta \rangle =  R \eta_{\lambda'}/ 2 \textrm{ and } -Q_{\lambda'} \le \langle \lambda_0, \chi - \theta \rangle \le \eta_{\lambda_0}/ 2$.}
\medskip

Take the complex $D_{-\lambda', \chi}^\vee$. It has homology supported in $G \cdot X^{\lambda' \geq 0}$, which lies in $X^{\rm us} (\omega^\ast + \epsilon \ell)$ by the Hilbert-Mumford criterion and the assumption $\langle \lambda', \ell \rangle < 0$. \Cref{L:complex_2} says that $D_{-\lambda', \chi}^\vee$ has one term of the form $\cO_X \otimes V(\chi)$ and other terms that have a filtration whose associated graded pieces are of the form $\cO_X \otimes V(\mu)$, where $\mu = \chi + \beta_{i_1}+\beta_{i_2}+\cdots+\beta_{i_p}$ with $p > 0$, $i_1,\ldots,i_p$ distinct, and $\langle - \lambda',\beta_{i_j} \rangle > 0$ for all $j = 1, \ldots, p$. 

As before, the assumed bounds for $\langle \lambda_0, \chi -\theta \rangle$ imply that $\mu$ remains in the $\lambda_0$-strip $\theta + B_{\lambda_0}$. We also have
\[
\langle \lambda', \chi - \theta \rangle > \langle \lambda', \mu - \theta \rangle \ge ( R/2 - 1) \eta_{\lambda'} > -R \eta_{\lambda'} /2
\]
where the first inequality holds because $p > 0$, the second inequality holds because $\eta_{\lambda'} = \eta_{-\lambda'}$, and the last inequality holds because $R > 1$. Thus, $\mu -\theta \in \overline{\nabla}^{S}$ with $S < R$.
\end{proof}

The set of $R$ for which $\lvert \langle \lambda', \chi - \theta \rangle \rvert = R\eta_{\lambda'}/2$ for some $\chi \in M$ is discrete, so one can perform induction on $R$, using \Cref{claim:nef_fano_cylinder_reduction} and \Cref{claim:nef_fano_cylinder_reduction_2}, to deduce that for any $\chi \in (\theta+B_{\lambda_0})\cap M$, $\cO_X \otimes V(\chi) \in \cC(\theta+\overline{\nabla})$. \Cref{fig:nef_fano_cylinder_reduction} illustrates an example of this inductive argument.
\end{proof}

\subsection{Example: toric Deligne-Mumford stacks of Picard rank two} \label{S:toric_examples}

Smooth toric Deligne-Mumford (DM) stacks are introduced in \cite{BCS03}. They are smooth DM stacks associated to some combinatorial data, called a \emph{stacky fan}, consisting of a triple $(A, \Sigma, \{ v_1,\ldots,v_n \})$ of a finitely generated abelian group $A$, a complete rational simplicial fan $\Sigma \subset A _\bR$, and a choice of non-torsion elements $v_i$ in each one-dimensional cone of $\Sigma$. The associated toric DM stack is denoted $\bP_\Sigma$.

For a smooth toric Fano DM stack $\bP_\Sigma$ with Picard rank at most $2$, the existence of a full strong exceptional collection consisting of line bundles was established by Borisov and Hua in \cite{BH09}. Their technique produces a particular window $P\subset \Pic(\bP_\Sigma) \otimes \bR$ from which the exceptional collection is exhibited as the set of line bundles whose image in $\Pic(\bP_\Sigma)\otimes \bR$ lies in $\theta+P$, where $\theta$ is any generic point. We will show below that whenever $M$ is spanned by the weights of $X$ and $G = \bG_m^2$, then the GIT quotients appearing in \Cref{T:main_fano} and \Cref{T:main_nef_fano} that have Picard rank $2$ are smooth toric DM stacks. In particular, \Cref{T:main_nef_fano} can be viewed as providing a large class of examples where the result of \cite{BH09} for Picard rank 2 can be extended to the case of smooth nef-Fano toric DM stacks.

\subsubsection{Linear GIT quotients by $\bG_m^2$ as toric stacks}

\begin{lem} \label{L:GIT_as_toric_stacks}
Let $X$ be a linear representation of $T = \bG_m^2$ over $\bC$ satisfying \Cref{H:main_setup}. Assume
\begin{enumerate}
    \item the weights of $X$ span $M$,
    \item $X^{\rm ss} (\omega^\ast + \epsilon \ell)$ has finite stabilizers in $T$ for some $\ell \in M_\bR$ and all $0 < \epsilon \ll 1$, and 
    \item the GIT quotient $X^{\rm ss} (\omega^\ast + \epsilon \ell)/T$ has Picard rank $2$,
\end{enumerate}
then $X^{\rm ss} (\omega^\ast + \epsilon \ell)/T$ is a smooth toric DM stack.
\end{lem}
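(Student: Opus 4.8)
The plan is to realize $X^{\rm ss}(\omega^\ast+\epsilon\ell)/T$ as the toric Deligne--Mumford stack $\bP_\Sigma$ of an explicit stacky fan, by matching its GIT presentation with the Cox-type presentation of \cite{BCS03}. First I set $n=\dim X$ and fix coordinates identifying $X$ with $\bA^n$ so that $T$ acts on the $i$-th coordinate through $\beta_i$; the weights assemble into a homomorphism $\beta\colon\bZ^n\to M$, $e_i\mapsto\beta_i$, which is surjective by hypothesis (1) and which is exactly the character-lattice map of the inclusion $T\hookrightarrow(\bG_m)^n$ defining the action. Since $M$ and $K:=\ker\beta$ are free, dualizing $0\to K\to\bZ^n\xrightarrow{\beta}M\to 0$ yields $0\to N\xrightarrow{\beta^\vee}\bZ^n\to A\to 0$ with $A\cong\operatorname{Hom}(K,\bZ)$ free of rank $n-2$; I write $b_i$ for the image of $e_i$ in $A$ and $\beta_{\mathrm{st}}\colon\bZ^n\to A$ for the map $e_i\mapsto b_i$.

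Next I would read off the unstable locus combinatorially. Hypothesis (2) forces $\omega^\ast+\epsilon\ell$ to be \emph{generic}, i.e.\ to lie in the interior of a chamber of the GIT wall-and-chamber decomposition of $M_\bR$, since a torus linearization lying on a wall always admits a strictly semistable point with positive-dimensional stabilizer. In particular $X^{\rm us}(\omega^\ast+\epsilon\ell)$ is a union of coordinate subspaces, and by the Hilbert--Mumford criterion (the formula for $X^{\rm us}(\ell)$ recalled in the preliminaries) together with Farkas' lemma, $x$ is semistable if and only if $\omega^\ast+\epsilon\ell\in\operatorname{cone}(\beta_i:i\in\operatorname{supp}x)$. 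Because $M_\bR$ is two-dimensional and $\omega^\ast+\epsilon\ell$ lies on no ray $\bR_{\ge 0}\beta_i$, this holds if and only if $\operatorname{supp}(x)$ contains some pair in the finite set $\cM:=\{\{i,j\}:\omega^\ast+\epsilon\ell\text{ lies in the interior of }\operatorname{cone}(\beta_i,\beta_j)\}$; equivalently $X^{\rm us}(\omega^\ast+\epsilon\ell)=V(x_ix_j:\{i,j\}\in\cM)$.

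I would then let $\Sigma$ be the fan in $A_\bR$ with maximal cones $\sigma_{\{i,j\}}:=\operatorname{cone}(b_k:k\notin\{i,j\})$ for $\{i,j\}\in\cM$, together with their faces, and claim that $(A,\Sigma,\beta_{\mathrm{st}})$ is a stacky fan with $\bP_\Sigma\cong X^{\rm ss}(\omega^\ast+\epsilon\ell)/T$. Granting that $\Sigma$ is a stacky fan, the identification is formal: by \cite{BCS03}, $\bP_\Sigma=[(\bA^n\smallsetminus Z_\Sigma)/G_\Sigma]$ with $G_\Sigma=\ker((\bG_m)^n\to A\otimes_\bZ\bG_m)$ acting through the tautological inclusion and $Z_\Sigma=V(x^{\widehat\sigma}:\sigma\in\Sigma_{\max})$, where $x^{\widehat{\sigma_{\{i,j\}}}}=x_ix_j$. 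Tensoring $0\to N\to\bZ^n\to A\to 0$ with $\bG_m$ (exact since $A$ is free) identifies $G_\Sigma$ with $N\otimes_\bZ\bG_m=T$, its inclusion into $(\bG_m)^n$ with the $\beta$-action, and $Z_\Sigma$ with $X^{\rm us}(\omega^\ast+\epsilon\ell)$ by the previous paragraph; hence $\bP_\Sigma=[(\bA^n\smallsetminus X^{\rm us})/T]=X^{\rm ss}(\omega^\ast+\epsilon\ell)/T$, which is smooth because every toric DM stack is.

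The substance is therefore in checking that $(A,\Sigma,\beta_{\mathrm{st}})$ really is a stacky fan: that each $\sigma_{\{i,j\}}$ is simplicial, that the $\sigma_{\{i,j\}}$ and their faces patch into a genuine complete rational fan in which the $b_i$ span distinct rays, and that no $b_i$ vanishes. Simpliciality of $\sigma_{\{i,j\}}$ is the linear independence of $\{b_k:k\notin\{i,j\}\}$, which by Gale duality is equivalent to that of $\beta_i,\beta_j$ --- automatic because $\{i,j\}\in\cM$ makes $\operatorname{cone}(\beta_i,\beta_j)$ two-dimensional --- and is the toric counterpart of hypothesis (2). For the fan axioms and completeness I would reduce to classical toric geometry: by \Cref{H:main_setup} the good quotient $X^{\rm ss}(\omega^\ast+\epsilon\ell)/\!\!/T$ is projective over $\Spec(\cO_X^T)=\Spec(k)$, hence proper, and it is the GIT quotient of $\bA^n$ by a torus at a generic linearization, so by the classical Cox construction it is a projective simplicial toric variety whose fan is precisely $\Sigma$; in particular $\Sigma$ is complete and simplicial. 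Hypothesis (3) (Picard rank two) is what excludes degeneracies --- it forces $\beta^\vee$ to be injective, so that $A$ has the expected rank, and forces each $b_i$ to be nonzero with the $b_i$ spanning $A_\bR$ --- so $(A,\Sigma,\beta_{\mathrm{st}})$ is a non-degenerate stacky fan of the required rank. I expect the only genuinely delicate point to be confirming that $\Sigma$ is a complete fan (as opposed to a mere collection of cones), and I would rather extract this from the known structure of torus GIT quotients of affine space than verify it by a direct combinatorial argument. One could instead bypass the stacky fan using a recognition theorem for toric DM stacks, since $X^{\rm ss}/T$ is a smooth separated DM stack carrying an action of the torus $(\bG_m)^n/T$ with a dense orbit; but the explicit stacky fan is what is needed for the comparison with \cite{BH09}.
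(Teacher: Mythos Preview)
Your argument is correct and follows the same overall architecture as the paper's: set up the Gale-dual sequence $0\to N\to\bZ^n\to A\to 0$, identify the GIT presentation with the Cox presentation of \cite{BCS03}, and use the Picard-rank hypothesis to ensure the fan has all $n$ rays. The difference is in how the fan is produced. The paper builds $\Sigma$ as the normal fan of the polytope $\Delta=\{p:\;p\cdot v_i\ge -\alpha_i\}$ attached to a lift $\alpha$ of $-R(\omega^\ast+\epsilon\ell)$, and then quotes specific results from \cite{CLS11} (Prop.~14.2.10, Thm.~14.2.13, Thm.~3.1.19, Prop.~14.2.21) to verify that $\Sigma$ is a genuine complete simplicial fan with exactly $n$ rays and that its irrelevant locus matches $X^{\rm us}$. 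You instead describe the maximal cones directly as the Gale duals $\sigma_{\{i,j\}}=\operatorname{cone}(b_k:k\notin\{i,j\})$ of the minimal semistable supports $\cM$, and defer the fan axioms and completeness to the general secondary-fan picture for toric GIT. Both routes produce the same $\Sigma$; the paper's has the advantage of pointing to precise references for the ``delicate point'' you flag, while yours makes the combinatorics of $X^{\rm us}$ more transparent and avoids introducing the auxiliary polytope.

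One small correction: the injectivity of $\beta^\vee\colon N\to\bZ^n$ is automatic from the surjectivity of $\beta$ and freeness of $M$, so it is not a consequence of hypothesis~(3). What~(3) actually buys you---and what the paper isolates as its step~(c)---is that each $b_i$ is nonzero and the $b_i$ span $n$ distinct rays of $\Sigma$; otherwise the Cox quotient presentation of the coarse space would exhibit Picard rank at most $1$.
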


\begin{proof}

By definition of toric DM stacks \cite{BCS03}*{Section~3} it will suffice to produce a stacky fan $\left(A, \Sigma, \{ v_1, \ldots, v_n \} \right)$ such that $X^{\rm us} (\omega^\ast + \epsilon \ell)$ coincides with the complement of the vanishing set of the monomial ideal 
\begin{equation} \label{eq:ideal_of_fan}
    I_{\Sigma}:=\left( \prod_{v_i\not\subset \sigma} z_i : \sigma \text{ is a cone in } \Sigma \right) 
\end{equation}
of $\Gamma (X, \cO_X) = \bC [z_1, \ldots, z_n]$. We will use the toric GIT constructions in \cite{CLS11}*{Chapter~14}. For a character $\chi\in M$, the semistable locus therein is described in terms of the sheaf of sections $\cL(\chi)$ of the trivial line bundle linearized with respect to $T$ by the character $\chi$, thus $X^{\rm ss} (\omega^\ast + \epsilon \ell)$ means $X^{\rm ss}_{-(\omega^\ast + \epsilon \ell)}$ in the notation of \cite{CLS11}*{Section~14.1}. We are using here that $R (\omega^\ast + \epsilon \ell) \in M$ for some $0 < \epsilon \ll 1$ and some integer $R \gg 0$. 

Let $\beta_1, \ldots, \beta_n$ be the weights of $X$ and consider the dual short exact sequences of abelian groups
\begin{equation} \label{eq:gale_exact_sequence}
    0 \rightarrow A^\ast \xrightarrow{\varrho} \bZ^n \xrightarrow{\varphi} M \rightarrow 0 , \quad \textrm{ and } \quad 0 \rightarrow N \xrightarrow{\varphi^\ast} \bZ^n \xrightarrow{\varrho^\ast}A \rightarrow 0 ,
\end{equation}
where $\varphi: \bZ^n \rightarrow M$ sends the $i^{\rm th}$ standard vector in $\bZ^n$ to the weight $\beta_i$ and $A^\ast :=\ker(\varphi)$. The map $\varphi$ is surjective by the assumption that the weights of $X$ span $M$, so one obtains an inclusion $T \subset \bG_m^n$ by applying $\Hom (-,\bG_m)$ to $\varphi$. Here we are identifying the characters of $\bG_m^n$ with $\bZ^n$.

Let $\alpha = (\alpha_1,\ldots,\alpha_n) \in \bZ^n$ be a lift of $-R(\omega^\ast + \epsilon \ell)$ via $\varphi$. We may assume that $\alpha_i < 0$ for all $i= 1, \ldots, n$ because $(-1, \ldots, -1)$ is a lift of $-\omega^\ast = -\det X \in M$ and $0 < \epsilon \ll 1$.  Let $v_1, \ldots, v_n \in A$ be the image of the standard basis of $\bZ^n$ under the map $\varrho^\ast$. Then one can associate to $\alpha$ the polyhedron
\[
     \Delta = \left\{ p\in A_\bR^\ast : p \cdot v_i \geq - \alpha_i \textrm{ for all } i =1,\ldots,n \right\} 
\]
that defines a (possibly degenerate) normal fan $\Sigma \subset A_\bR^\ast$ as done in \cite{CLS11}*{Prop.~14.2.10}. Here $\cdot$ is the standard pairing between $A_\bR^\ast$ and $A_\bR$. Then \cite{CLS11}*{Thm.~14.2.13} says that the toric variety corresponding to $\Sigma$ coincides with the good quotient $Y := X^{\rm ss}(\omega^\ast + \epsilon \ell)/\!/T$. This correspondence implies the following:

\medskip
\noindent (a) \textit{$\Sigma$ is a (non degenerate) fan.}
\medskip

By \cite{CLS11}*{Prop.~14.2.10}, it suffices to show that $\Delta$ is full dimensional. We assume that $\Delta$ is not full dimensional and derive a contradiction.

We first show that there is a non-trivial cocharacter of $T':= \bG_m^n/T \cong \bG_m^{n - 2}$ that acts trivially on $Y$. Let $\cL(\alpha)$ be the sheaf of sections of the trivial line bundle on $X$ with $\bG_m^n$ linearization by $\alpha$, then for some integer $d \gg 0$, $\cL(\alpha)^{\otimes d}$ descends to a very ample line bundle on $Y$, and $U : =\Gamma \left(Y, \cL(\alpha)^{\otimes d} \right)$ is the $T'$ representation with weights $d \Delta \cap A^\ast$. Here we are identifying $A^\ast$ with the characters of $T'$. Thus $Y$ has a $T'$-equivariant embedding into $\bP(U)$. If $\Delta$ is not of full dimension, then it is possible to shift the weights of $U$ by a single character so that they lie in a linear subspace of $\bA^\ast_\bQ$. Thus every character in $d\Delta \cap A^\ast$ pairs zero with some $\lambda \in A_\bQ$. In particular, a sufficiently large integer multiple of $\lambda$ is a cocharacter of $T'$ that acts trivially on $\bP(U)$ and hence on $Y$.

Now the existence of this cocharacter implies that the dimension of the dense $T'$ orbit in $Y$ is strictly smaller than $\dim T' = n-2$. This contradicts the fact that $\dim Y = \dim X - \dim T = n-2$, which holds because $X^{\rm ss}(\omega^\ast)$ is assumed to have finite stabilizers and thus $Y$ is a geometric quotient.

\medskip
\noindent (b) \textit{$\Sigma$ is complete and simplicial:}
\medskip

Note that $Y$ is projective over $\bC$ because the morphism $Y \rightarrow \Spec(\cO_X^T)$ is projective and  $\cO_X^T \cong \bC$ under \Cref{H:main_setup}. Furthermore, the assumption on finite stabilizers implies that $Y$ only has finite quotient singularities. Thus \cite{CLS11}*{Thm.~3.1.19} implies $\Sigma$ is simplicial and complete. 

\medskip
\noindent (c) \textit{$\Sigma$ has exactly $n$ one dimensional cones.}
\medskip

Because $\Sigma$ is complete and simplicial, \cite{CLS11}*{Thm.~5.1.11} implies $Y$ is isomorphic to a geometric quotient of the form $ U /\!/ G $, where $U$ is a particular open subset of $\bC^{\vert \Sigma(1) \vert}$,  $\Sigma(1)$ is the set of one dimensional cones of $\Sigma$, and $G \subset \bG_m^{\vert \Sigma(1) \vert}$ is a subgroup. If $\Sigma$ had fewer than $n$ one dimensional cones, then because $Y$ is of dimension $n-2$, the above construction would imply that $Y$ has Picard rank strictly less than $2$. This would contradict the assumption on the Picard rank.

\medskip
\noindent{\textit{Completing the proof:}}
\medskip

Note that the facets of $\Delta$ occur among the following (possibly empty) subsets $F_1,\ldots,F_n$, where 
\[
    F_i:=\{ p\in \Delta :p\cdot v_i = - \alpha_i \} .
\]
We have shown that $\Sigma$ has exactly $n$ one dimensional cones, thus $F_1, \ldots F_n$ are all the facets of $\Delta$. From the definition of $\Sigma$ (see \cite{CLS11}*{Prop.~14.2.10}), the one dimensional cones are $\sigma_1, \ldots, \sigma_n$, where
\[ 
    \sigma_i=\{ q\in A_{\bR}:p\cdot q \geq 0 \textrm{ for all } p\in F_i\}.
\]
Because we chose $\alpha$ so that $\alpha_i < 0$ for all $i = 1, \ldots, n$, it follows that $v_i \in \sigma_i$ and is in fact a generator. It follows from (a), (b), and (c) that $(A, \Sigma, \{ v_1, \ldots, v_n \})$ is a stacky fan.

Now \cite{CLS11}*{Prop.~14.2.21} says that 
\begin{itemize}
\item[($\star$)] $(x_1,\cdots,x_n) \in X^{\rm ss} (\omega^\ast + \epsilon \ell)$ if and only if $\bigcap_{i\in B_x} F_i\neq \emptyset$, where $B_x=\{i :  x_i=0\}$.
\end{itemize}
The condition $\bigcap_{i\in B_x} F_i\neq \emptyset$ is equivalent to the condition that there is a cone $\sigma \in \Sigma$ containing all $\sigma_i$ for $i \in B_x$. We have already seen that $v_i$ is a generator for $\sigma_i \subset A_\bR$, so the latter condition is equivalent to having a cone $\sigma \in \Sigma$ containing all $v_i$ for $i \in B_x$. Using ($\star$) we obtain that $x \in X^{\rm ss}(\omega^\ast + \epsilon \ell)$ if and only if $x$ lies in the complement of the vanishing set of the ideal \eqref{eq:ideal_of_fan}.
\end{proof}

\subsubsection{The Borisov-Hua window} \label{subsubsection: Borisov_Hua}

Let $(A, \Sigma,\{v_1,\ldots,v_n\})$ be a stacky fan.

\begin{enumerate}
    \item Let $a_1,\ldots,a_n$ be a collection of non-zero rational numbers such that $\sum_{i=1}^n a_i =0$ and $\sum_{i=1}^n a_iv_i =0$. Let $I_+$ be the set of indices where $a_i>0$. For Fano DM toric stacks, the existence and uniqueness up to scaling of such a tuple is shown in \cite{BH09}*{Prop.~5.4}.
    \item Let $r_1,\ldots,r_n$ be another collection of positive real numbers $r_i$ such that $\sum_{i=1}^n r_i =1$ and $\sum_{i=1}^n r_i v_i =0$. The existence of such a tuple also follows from the proof of \cite{BH09}*{Prop.~5.4}.
\end{enumerate}
Let $E_1,\ldots,E_n$ be the standard basis of $\bZ^n$. The collection $ r_1, \ldots, r_n $ defines a linear functional $f$ on $\bZ^n$ sending $E_i$ to $r_i$. Analogously, there is a functional $\psi$ sending $E_i$ to $a_i$. By \cite{BH09}*{Prop.~3.3}, the Picard group of the toric DM stack $\bP_\Sigma$ is isomorphic to the quotient of $\bZ^n$ by the subgroup of elements of the form $\sum_{i=1}^n (p \cdot v_i) E_i$, for all $p \in A^\ast$. With this description, it follows that $\psi$ and $f$ descend to $\Pic(\bP_\Sigma)$ because $(a_i)$ and $(v_i)$ satisfy $\sum_{i=1}^n a_i v_i =0$ and $\sum_{i=1}^n r_i v_i =0$ respectively.

The Borisov-Hua window $P \subset \Pic(\bP_\Sigma)\otimes \bR$ is then defined to be the region cut out by the inequalities
\begin{equation} \label{eq:Borisov_Hua_inequalities} 
\lvert f(-) \rvert \leq \frac{1}{2},\qquad \lvert\psi(-)\rvert\leq \frac{1}{2}\sum_{i\in I_+} a_i. \end{equation}

\subsubsection{A comparison of the barrel window to the Borisov-Hua window}

Under \Cref{H:main_setup}, assume that $X^{\rm ss} (\omega^\ast)$ has finite stabilizers, the weights of $X$ span $M$, and $X^{\rm ss}/T$ has Picard rank $2$. Then by \Cref{L:GIT_as_toric_stacks} we can consider $X^{\rm ss}(\omega^\ast)/T$ as a toric DM stack with the stacky fan $(A, \Sigma,\{v_1,\ldots,v_n\})$ constructed above using \eqref{eq:gale_exact_sequence}, namely the ray generators $v_i$ are the images of the standard vectors in $\bZ^n$ under $\varrho^\ast$. In this context, the subgroup of elements of the form $\sum_{i=1}^n (p \cdot v_i) E_i$ for $p \in A^\ast$ is precisely the image of the map $\varrho: A^\ast \hookrightarrow \bZ^n$ from the first exact sequence in \eqref{eq:gale_exact_sequence}. Thus $\Pic(X^{\rm ss}(\omega^\ast)/T)$ is canonically identified with $M$ via the map $\varphi$ that sends the $i^{\rm th}$ standard vector to the weight $\beta_i$. In particular, if we find a suitable choice of tuples $(a_i)$ and $(r_i)$, then the Borisov-Hua window $P$ is the image in $M_\bR$ of the region in $\bR^n$ cut out by \eqref{eq:Borisov_Hua_inequalities}.

We claim that there is a particular choice of tuples $(a_1, \ldots, a_n)$ and $(r_1, \ldots, r_n)$ in the construction of the Borisov-Hua window such that the parallelogram $P$ in $M_\bR$ defined by these tuples coincides with the cylinder window $\overline{\nabla}$.

\medskip
\noindent (1) \textit{The choice of $\{ a_1, \ldots, a_n \}$}:
\medskip

Let $\lambda'\in N_\bR \setminus 0$ be any cocharacter satisfying $\langle \lambda', \omega^\ast\rangle=0$. For each $i=1,\ldots,n$, define 
\[a_i := - \langle \lambda', \beta_i \rangle.\]
We show that the collection $\{a_1,\cdots,a_n\}$ satisfies the conditions for \ref{subsubsection: Borisov_Hua}, i.e., all $a_i \neq 0$, $\sum_i a_i = 0$, and $\sum_i a_i v_i = 0$.

The assumption that $X^{\rm ss} (\omega^\ast)$ has finite stabilizers together with \Cref{L:unstable_locus} implies that no weights of $X$ lie in the span of $\omega^\ast$, so $a_i \neq 0$ for all $i$, and $\det X^{\lambda' \le 0} = \det X^{\lambda' < 0}$. We then have
\[\sum_{i\in I_+} a_i = \langle \lambda', - \det X^{\lambda' < 0} \rangle = \eta_{\lambda'} \qquad \sum_{i\in I \setminus I_+} a_i = \langle \lambda', - \det X^{\lambda' > 0} \rangle = -\eta_{\lambda'} , \]
which implies that $\sum_i a_i = 0$.

For the second condition, we observe that for every cocharacter $\lambda \in N = M^\ast$, the expression $\sum_{i=1}^n \langle \lambda, \beta_i\rangle v_i$ is the image of $\lambda$ under the composition $M^{\ast} \stackrel{\varphi^\ast} {\hookrightarrow} \bZ^{n} \stackrel{\varrho^\ast} {\twoheadrightarrow} A$, so $\sum_{i=1}^n a_i v_i=0$.

\medskip
\noindent (2) \textit{The choice of $\{ r_1, \ldots, r_n \}$}:
\medskip

Recall the cocharacter $\lambda_0$ from \Cref{H:main_setup} that pairs strictly negatively with respect to all weights of $X$. For each $i=1,\ldots,n$, define 
\[r_i := - \langle\lambda_0,\beta_{i}\rangle/\eta_{\lambda_0},\]
where $\eta_{\lambda_0} = \langle \lambda_0, - \det X \rangle$. Thus $\sum_i^n r_i =1$ and all $r_i$ are strictly positive. That $\sum_{i=1}^n r_i v_i=0$ follows as in the case above. 

The Borisov-Hua window for these choices of tuples $(a_i)$ and $(r_i)$ is the image under $\varphi$ of
\[
    \left\{ (y_1,\ldots,y_n) \in \bR^n : \left\lvert \sum_{i=1}^n - y_i \left\langle \lambda_0, \beta_i \right\rangle \right\rvert  \le \frac{\eta_{\lambda_0}}{2}, \left\lvert \sum_{i=1}^n - y_i \left\langle \lambda', \beta_i \right\rangle \right\rvert  \le \frac{1}{2} \sum_{i\in I_+} a_i \right\},
\]
which is the cylinder window $\overline{\nabla}$ because $\sum_{i\in I_+} a_i = \eta_{\lambda'}$. Hence for these choices of $(a_i)$ and $(r_i)$, and $\theta \in M_\bR$, $\theta + \overline{\nabla} = \theta + P$. 

Let $\theta \in M_\bR$ be a character such that $\theta + \partial \overline{\nabla} $ does not contain any points in $M_\bQ$. This is the genericity condition for $\theta$ in \cite{BH09}*{Thm.~5.11}. The condition also guarantees that $(\theta + \partial \overline{\nabla}) \cap M = \emptyset$, hence $(\theta + \overline{\nabla}) \cap M = (\theta + \nabla) \cap M$. In particular, \Cref{P:exceptional_collections} and \Cref{T:main_fano} recover the Borisov-Hua exceptional collection on $X^{\rm ss}(\omega^\ast) / T$ under the hypotheses of \Cref{L:GIT_as_toric_stacks}.

\bibliography{bib_exceptional}{}
\bibliographystyle{plain}

\end{document}